\newtheorem{thm}{Theorem}[section]
\newtheorem{prop}{Proposition}[section]
\newtheorem{cor}{Corollary}[section]
\newtheorem{rmk}{Remark}[section]
\newtheorem{lma}{Lemma}[section]
\newcommand{\E}{\mathbb{E}}
\newcommand{\Q}{\mathbb{Q}}
\newcommand{\Prob}{\mathbb{P}}
\newcommand \Esp {\mathbb{E}}
\def\N{{\rm I\kern-0.16em N}}
\def\R{{\rm I\kern-0.16em R}}
\def\E{{\rm I\kern-0.16em E}}
\def\P{{\rm I\kern-0.16em P}}
\def\F{{\rm I\kern-0.16em F}}
\def\B{{\rm I\kern-0.16em B}}
\def\C{{\rm I\kern-0.46em C}}
\def\G{{\rm I\kern-0.50em G}}
\newcommand\blfootnote[1]{%
  \begingroup
  \renewcommand\thefootnote{}\footnote{#1}%
  \addtocounter{footnote}{-1}%
  \endgroup
}
\author{J\"urgen Angst, Thibault Pautrel, Guillaume Poly}
\title{Real zeros of random trigonometric polynomials \\  with dependent coefficients} 
\begin{document}
\maketitle
\blfootnote{jurgen.angst@univ-rennes1.fr, thibault.pautrel@univ-rennes1.fr, guillaume.poly@univ-rennes1.fr}
\blfootnote{Univ Rennes, CNRS, IRMAR - UMR 6625, F-35000 Rennes, France.}
\blfootnote{This work was supported by the ANR grant UNIRANDOM, ANR-17-CE40-0008.}

\abstract{We further investigate the relations between the large degree asymptotics of the number of real zeros of random trigonometric polynomials with dependent coefficients and the underlying correlation function. 
We consider trigonometric polynomials of the form\par
\vspace{-0.2cm}
\[
f_n(t):= \frac{1}{\sqrt{n}}\sum_{k=1}^{n}a_k \cos(kt)+b_k\sin(kt), ~x\in [0,2\pi],
\]
where the sequences $(a_k)_{k\geq 1}$ and $(b_k)_{k\geq 1}$ are two independent copies of a stationary Gaussian process centered with variance one and correlation function $\rho$ with associated spectral measure $\mu_{\rho}$. We focus here on the case where $\mu_{\rho}$ is not purely singular and we denote by $\psi_{\rho}$ its density component with respect to the Lebesgue measure $\lambda$. Quite surprisingly, we show that the asymptotics of the number of real zeros $\mathcal{N}(f_n,[0,2\pi])$ of $f_n$ in $[0,2\pi]$ is not related to the decay of the correlation function $\rho$ but instead to the Lebesgue measure of the vanishing locus of $\psi_{\rho}$. Namely, assuming that $\psi_{\rho}$ is $\mathcal{C}^1$ with H\"older derivative on an open set of full measure, one establishes that
\[
\lim_{n \to +\infty} \frac{\Esp\left[\mathcal{N}(f_n,[0,2\pi])\right]}{n}= \frac{\lambda(\{\psi_{\rho}=0\})}{\pi \sqrt{2}} + \frac{2\pi - \lambda(\{\psi_{\rho}=0\})}{\pi\sqrt{3}}.
\]
On the other hand, assuming a sole log-integrability condition on $\psi_{\rho}$, which implies that it is positive almost everywhere, we recover the asymptotics of the independent case:
\[
\lim_{n \to +\infty} \frac{\Esp\left[\mathcal{N}(f_n,[0,2\pi])\right]}{n}= \frac{2}{\sqrt{3}}.
\]
The latter asymptotics thus broadly generalizes the main result of \cite{ADP19} where the spectral density was assumed to be continuous and bounded from below. Besides, with further assumptions of regularity and existence of negative moment for $\psi_{\rho}$, which encompass e.g. the case of random coefficients being increments of fractional Brownian motion with any Hurst parameter, we moreover show that the above convergence in expectation can be strengthened to an almost sure convergence:
\[
\lim_{n \to +\infty} \frac{\mathcal{N}(f_n,[0,2\pi])}{n}= \frac{2}{\sqrt{3}}.
\] 
}

\newpage
\tableofcontents
\section{Introduction and statements of the result}
\subsection{Introduction} 
This article focuses on the study of the number of real zeros of random trigonometric polynomials which is the object of a vast literature. In this framework, of particular interest is the question of the universality of the large degree asymptotics of this number of zeros, which generally consists in determining whether the latter asymptotic behavior depends or not on the specific choice of the joint distribution of the random coefficients.
\par
\medskip
The case of random trigonometric polynomials whose coefficients are independent and identically distributed has been studied intensively. In this setting, the asymptotics of the expected number of real zeros is known to display an universal behavior, at both local and global scales, as established in a serie of papers, see for example \cite{AP15,IKM16,Fla17,DNV18} and recently \cite{NV18} which provides the most general conditions.
\par
\medskip
Though it is a rather natural extension, much less is known in the case of random coefficients that are correlated. Most of the techniques developed in the aforementioned references seems hard to adapt to the context of correlated coefficients, roughly because one cannot exploit anymore the independence to provide accurate enough estimates of characteristic functions, which enable one to deal with anti-concentration problems that naturally arise in this context. Nevertheless, in the case of Gaussian coefficients, one can bypass these difficulties which explain that the content of the available literature for dependent coefficients is so far restricted to Gaussianity.
\par 
\medskip
Let us detail our model. We consider a probability space $(\Omega, \mathcal{F}, \Prob)$ on which we define two independent stationary Gaussian processes $(a_k)_{k\geq 1}$ and $(b_k)_{k\geq 1}$, where the variables $a_k$ and $b_k$ are centered with unit variance and  with covariance function 
\[
\rho(|k-\ell|):= \Esp[a_k a_{\ell}]=\Esp[b_k b_{\ell}]~,~~k, \ell \geq 1.
\] 
By Bochner--Herglotz Theorem, the sequence $\rho$ is then associated to a so-called spectral measure $\mu_\rho$ and since $\rho$ is real and $\rho(0)=1$, the measure $\mu_\rho$ is in fact a symmetric probability measure on $[-\pi,\pi]$. We then set $f_n$ the associated random trigonometric polynomial 
\begin{equation}\label{def.model}
f_n(x)= \frac{1}{\sqrt{n}} \sum_{k=1}^n a_k \cos(kx) + b_k \sin(kx), \quad x \in [-\pi, \pi].
\end{equation}
The number of real zeros of a function $f$ in a given interval $[a,b]$ will be denoted by
\[\mathcal{N}(f,[a,b]):=\#\left\{t\in [a,b], f(t)=0\right\}.\]

The asymptotics of the number of real zeros in the case of Gaussian independent coefficients, i.e. the case where $\rho(k)=\delta_0(k)$ and $\mu_{\rho}(dx)=\frac{1}{2\pi} \mathds{1}_{[-\pi, \pi]}(x)dx$, was first studied by Dunnage in \cite{Dun66}. The analogue question for Gaussian dependent coefficients was first investigated in \cite{Sam78,RS84} where the coefficients $(a_k)_{k\ge 1}$ followed a stationary Gaussian process with both constant correlation i.e. $\rho(k)=r$ with $|r|<1$ for $k \neq 0$, and geometric correlation i.e. $\rho(k)=r^k$ with $|r|<1$. Although these two types of correlations are of seemingly very different nature, it was shown that the asymptotic behavior of the expected number of roots coincides with the one of i.i.d. Gaussian coefficients. In the reference \cite{ADP19}, the authors examined the case where the spectral measure admits a continuous density on $(0,2\pi)$ which is bounded from below on $[0,2\pi]$ for which the asymptotic also coincides with the one of i.i.d. Gaussian coefficients:

\begin{equation} \label{eq.asym.univ}
\lim_{n\to +\infty} \frac{\Esp\left[\mathcal{N}(f_n,[0,2\pi])\right]}{n}= \frac{2}{\sqrt{3}}.
\end{equation}

Recently in \cite{Pau20}, the author studied the case where the random coefficients still form a stationary Gaussian process, but the associated spectral measure in purely singular, namely $\rho(k)=\cos(k\alpha)$ with $\alpha \not \in \mathbb{Q}$ such that $\mu_{\rho} = \frac{\delta_{\alpha} + \delta_{-\alpha}}{2}$. He then established that the normalized expected number of zeros is not converging and in fact admits a whole continuum of possible limits, namely:
\begin{equation}\label{pau.20}
\text{Adh}\left(\frac{\Esp\left[\mathcal{N}(f_n,[0,2\pi])\right]}{n}\,\Big{|}~n\ge 1\right)=[\sqrt{2},2].
\end{equation}

As $\frac{2}{\sqrt{3}}\notin [\sqrt{2},2]$, it then clearly appears that the asymptotic behavior of the number of real zeros is strongly related to the properties of the underlying correlation function $\rho$ and hence the associated spectral measure $\mu_{\rho}$.
Conversely, some recent papers provide examples of non-stationary Gaussian entries for which the universal asymptotics \eqref{eq.asym.univ} does not hold, namely by considering palindromic Gaussian entries as in \cite{Pir19b}, or special pairwise block Gaussian entries in \cite{Pir19a}. 

\par 
\medskip

Investigating further the relations between the asymptotics of the number of real zeros and the underlying spectral measure is the main object of the present article. 
Unless otherwise stated, throughout the whole article we shall assume that the spectral measure $\mu_\rho$ is not purely singular with respect to the Lebesgue measure and we will denote by $\psi_\rho$ its absolutely continuous component, i.e. the Radon--Nikodym derivative $\psi_\rho:=d \mu_\rho/d \lambda\neq 0$.
In the case where the measure $\mu_\rho$ is purely absolutely continuous and provided that the spectral density $\psi_{\rho}$ is $\mathcal C^1$ with a H\"older derivative, we establish that $n^{-1} \Esp\left[\mathcal{N}(f_n,[0,2\pi])\right]$ converges as $n$ goes to infinity, and quite surprisingly that the limit is a linear function of $\lambda\left(\{\psi_\rho=0\}\right)$, the Lebesgue measure of the nodal set associated with the spectral density, see Theorems \ref{theo.main} and \ref{theo.main.C1} below. In particular, this limit does not depend on the shape of the spectral density and neither on the speed of decay of the correlation $\rho$. Otherwise, in the case where $\log(\psi_\rho)$ is integrable, whatever the singular part of $\mu_\rho$ is, we establish that the universal asymptotics \eqref{eq.asym.univ} holds, see Theorem \ref{thm.asym.mean} below. This can be seen as a strong improvement of the assumptions required in \cite{ADP19} since (i) we do not require anymore any assumption of continuity of $\psi_\rho$, (ii) we do not require that it is bounded from below but instead that it is logarithmically integrable and (iii) we may deal with spectral measures having possibly a non zero singular part. Moreover, with some additional regularity and integrability assumptions on the spectral density, we reinforce the convergence in an almost sure sense, see Theorem \ref{thm.as.asym}. 

\par
\medskip
Contrarily to the main methods used in the previously quoted literature, which usually rely crucially on the celebrated Kac--Rice formula, our approach to get the aforementioned extensions of the main result of \cite{ADP19} uses a significantly different strategy. Namely we exploit the following equality
\[
\frac{\mathcal{N}(f_n,[0,2\pi])}{n}=\frac{1}{2\pi}\int_0^{2\pi} \mathcal{N}\left(f_n,\left[x,x+\frac{2\pi}{n}\right]\right) dx,
\]

which enables one to reinterpret the quantity of interest as the expected number of roots of $g_n(u)=f_n\left(X+\frac{u}{n}\right)$ in the set $[0,2\pi]$ and where the expectation is computed through and independent and uniformly distributed random variable $X$ on $[0,2\pi]$. Bearing this in mind, we then study the limit in distribution of $g_n(\cdot)$ in the functional space $\mathcal{C}^1\left([0,2\pi]\right)$ endowed with the $\mathcal{C}^1$ topology. This allows to deal with the expected number of roots of the limit process which in turn leads to the universal asymptotics \eqref{eq.asym.univ}. Quite remarkably, it seems that this strategy provides more precise statements than the one obtained via the Kac--Rice strategy as bounding the integrand in the Kac--Rice integral usually imposes non necessary assumptions on the spectral density $\psi_\rho$ which are roughly due to the presence of some denominator that must be bounded from below in the estimates. We stress that this approach is inspired by Theorem 3.1.1 of Salem and Zygmund in \cite{SZ54} and by the point of view and strategy adopted in \cite{AP19} in the case of independent coefficients. 
\newpage

\subsection{Main results and comments}\label{sec.stat.res}
The first main result of the article exhibits a remarkable interplay between the Lebesgue measure of the vanishing locus of the spectral density $\psi_{\rho}$  and the asymptotics of the number of real zeros. In particular, if $\psi_{\rho}$ vanishes on a set of positive Lebesgue measure, the asymptotics of the normalized expected number of real zeros is not universal, i.e. differs from the one obtained in the independent case.

\begin{thm} \label{theo.main}
Suppose that $\mu_\rho(dx)=\psi_\rho(x) dx$ where the spectral density $\psi_{\rho}$ is $\mathcal{C}^1$ with H\"{o}lder derivative on an open set of full Lebesgue measure, then we have
\[
\lim_{n \to +\infty} \frac{\Esp\left[\mathcal{N}(f_n,[0,2\pi])\right]}{n}= \frac{\lambda(\{\psi_{\rho}=0\})}{\pi \sqrt{2}} + \frac{2\pi - \lambda(\{\psi_{\rho}=0\})}{\pi\sqrt{3}}.
\]
\end{thm}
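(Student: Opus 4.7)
My plan is to exploit the averaging identity recalled in the introduction: interpreting the outer integral as sampling an independent uniform $X$ on $[0,2\pi]$,
\[
\frac{\Esp[\mathcal{N}(f_n,[0,2\pi])]}{n} = \Esp\bigl[\mathcal{N}(g_n,[0,2\pi])\bigr], \qquad g_n(u) := f_n\bigl(X + u/n\bigr).
\]
The problem becomes: for almost every $x$, identify the limit in law of $g_n$ conditionally on $X=x$ in $\mathcal{C}^1([0,2\pi])$, and read off the expected number of zeros of the limit. A direct Fourier computation shows
\[
\mathrm{Cov}(g_n(u),g_n(v)\mid X=x) = \frac{1}{n}\,\mathrm{Re}\!\int \psi_\rho(\phi - x)\, S_n(\phi + u/n)\,\overline{S_n(\phi + v/n)}\, d\phi,
\]
with $S_n(\alpha) = \sum_{k=1}^n e^{ik\alpha}$. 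The Fejér-type kernel $|S_n|^2/n$ is concentrated at $\phi = 0$ modulo $2\pi$, and this concentration forces a dichotomy according to the value $\psi_\rho(-x) = \psi_\rho(x)$.

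On the \emph{positive regime} $\psi_\rho(x)>0$, I would localize via $\phi = \beta/n$ and use $n^{-1} S_n(\beta/n) \to (e^{i\beta}-1)/(i\beta)$, combined with the $\mathcal{C}^1$ regularity of $\psi_\rho$ near $-x$, to get
\[
\mathrm{Cov}(g_n(u), g_n(v)\mid X=x) \longrightarrow 2\pi\psi_\rho(x) \cdot \frac{\sin(u-v)}{u-v}.
\]
Together with the analogous limit for the covariance of $g_n'$ (where the Hölder regularity of $\psi_\rho'$ is used), this yields convergence in distribution of $g_n$ in $\mathcal{C}^1([0,2\pi])$ toward $\sqrt{2\pi\psi_\rho(x)}\,H$, where $H$ is the stationary Gaussian process with sinc covariance. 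Since $\mathrm{sinc}''(0) = -1/3$, Kac--Rice gives $\Esp[\mathcal{N}(H,[0,2\pi])] = 2/\sqrt{3}$; this value is scale-invariant, so every $x$ in this regime contributes $2/\sqrt{3}$.

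On the \emph{vanishing regime} where $x$ is interior to $\{\psi_\rho=0\}$, the Dirichlet peak contributes nothing and $\mathrm{Cov}(g_n(u), g_n(v)\mid X=x) = O(1/n)$. I would then pass to the zero-preserving rescaling $\tilde g_n := \sqrt n\, g_n$. Away from the peak, $S_n(\phi+u/n)\overline{S_n(\phi+v/n)}$ splits into a non-oscillating piece $(1 + e^{i(u-v)})/(4\sin^2(\phi/2))$ plus oscillating terms $e^{\pm in\phi}(\cdots)$ which are killed by Riemann--Lebesgue against the smooth factor $\psi_\rho(\phi - x)/\sin^2(\phi/2)$, producing
\[
\mathrm{Cov}(\tilde g_n(u), \tilde g_n(v)\mid X=x) \longrightarrow C(x)\bigl(1 + \cos(u-v)\bigr), \qquad C(x) = \int\frac{\psi_\rho(\phi-x)}{4\sin^2(\phi/2)}\,d\phi > 0.
\]
This is exactly the covariance of the trigonometric polynomial $Y(u) = Z_0 + Z_1 \cos u + Z_2 \sin u$ with $(Z_0,Z_1,Z_2)$ i.i.d. centered Gaussian of variance $C(x)$. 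Such a $Y$ has two zeros on $[0,2\pi]$ when $|Z_0| < \sqrt{Z_1^2+Z_2^2}$ and none otherwise; the spherical symmetry of $(Z_0,Z_1,Z_2)$ makes the first coordinate of $(Z_0,Z_1,Z_2)/\|(Z_0,Z_1,Z_2)\|$ uniform on $[-1,1]$, so the event has probability $1/\sqrt{2}$, whence the expected zero count is $\sqrt{2}$. A Lebesgue density argument together with the $\mathcal{C}^1$ hypothesis ensures that the boundary of $\{\psi_\rho = 0\}$ has zero measure, so integrating $2/\sqrt{3}$ on $\{\psi_\rho>0\}$ and $\sqrt{2}$ on $\{\psi_\rho=0\}$ against $dx/(2\pi)$ reproduces the announced formula.

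The main technical obstacles are (i) establishing tightness of the conditional laws of $g_n$ and $\tilde g_n$ in the $\mathcal{C}^1$ topology, which requires second-moment estimates on the increments of both $g_n$ and $g_n'$ uniformly in $n$, where the Hölder regularity of $\psi_\rho'$ supplies the required modulus of continuity; and (ii) upgrading convergence in distribution of $\mathcal{N}(g_n,[0,2\pi])$ to convergence of expectations. I expect (ii) to be the delicate step and would address it via a uniform integrability argument, typically obtained from a Kac--Rice-type second-moment bound on the number of zeros that avoids any lower bound on $\psi_\rho$.
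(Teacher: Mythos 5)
Your proposal takes a genuinely different route from the paper. The paper proves Theorem~\ref{theo.main} by working directly inside the Kac--Rice formula: it first shows the integrand $\sqrt{I_n(x)}/n$ is uniformly bounded (using Lemma~\ref{lem.speedfejer}), then that it converges pointwise a.e.\ to $1/\sqrt{3}$ on $\{\psi_\rho>0\}$ and to $1/\sqrt{2}$ on $\{\psi_\rho=0\}$, and concludes by dominated convergence. The delicate case $\psi_\rho(x)=0$ is handled by resolving the $0/0$ form: the paper establishes \emph{$L^2$} convergence of the deviations $n\left(K_n\ast\psi_\rho-\psi_\rho\right)\to\mathcal{L}[\psi_\rho]$ and $\tfrac{1}{n\alpha_n}\left(L_n\ast\psi_\rho-\psi_\rho\right)\to\tfrac12\mathcal{L}[\psi_\rho]$, extracts an a.e.\ convergent subsequence, and shows $\mathcal{L}[\psi_\rho]\geq 1/2$ on $\{\psi_\rho=0\}$, so the ratio of the deviations tends to $1/2$. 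By contrast, you use the Salem--Zygmund localization strategy (which the paper only deploys for Theorems~\ref{thm.asym.mean} and \ref{thm.as.asym}): convergence in $\mathcal{C}^1$ of $g_n$ to $\sqrt{2\pi\psi_\rho(x)}H$ on $\{\psi_\rho>0\}$, and of $\sqrt{n}\,g_n$ to a degree-one trigonometric polynomial $Z_0+Z_1\cos u+Z_2\sin u$ on $\{\psi_\rho=0\}$. Your spherical-symmetry computation $\Esp[\mathcal{N}]=2\,\mathbb{P}(Z_0^2<Z_1^2+Z_2^2)=\sqrt{2}$ is a pleasantly conceptual explanation of where $\sqrt{2}$ comes from, one the paper's purely computational Step~3 does not provide.

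There is, however, a genuine gap. Your vanishing regime is argued only for $x$ \emph{interior} to $\{\psi_\rho=0\}$, and you then assert that the boundary of $\{\psi_\rho=0\}$ has zero Lebesgue measure. This is false under the stated hypotheses: a $\mathcal{C}^\infty$ nonnegative function can vanish \emph{exactly} on a fat Cantor set (glue smooth bumps of height $\ll(b_j-a_j)^{2+\alpha}$ on each complementary interval $(a_j,b_j)$). Such a zero set has positive measure but empty interior, so its boundary is the whole set. Your argument then yields the wrong constant on a set of positive measure, and the conclusion fails. The paper's $L^2$--deviation argument handles \emph{every} $x$ in $\{\psi_\rho=0\}$ up to a null set, not just interior points, which is precisely why it is framed that way. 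The fix for your approach is available but you did not make it: if $\psi_\rho(x)=0$ then $\psi_\rho'(x)=0$ (nonnegativity), and the H\"older condition on $\psi_\rho'$ gives $\psi_\rho(x-\phi)=O(|\phi|^{1+\alpha})$, so $\psi_\rho(x-\phi)/\sin^2(\phi/2)$ is integrable near $\phi=0$ even without assuming $\psi_\rho$ vanishes on a neighborhood of $x$; Riemann--Lebesgue then kills the oscillating part of $n\,\mathrm{Cov}(g_n(u),g_n(v)\mid X=x)$ and produces the covariance $C(x)(1+\cos(u-v))$ with $C(x)=\frac{1}{2\pi}\int\psi_\rho(x-\phi)/(4\sin^2(\phi/2))\,d\phi>0$ for a.e.\ such $x$. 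Note also that you invoke the $\mathcal{C}^1$ regularity for the \emph{positive} regime, where in fact only continuity of $\psi_\rho$ (Fej\'er--Lebesgue) is needed; the H\"older derivative is needed exactly where you omit it, namely to control the vanishing regime at non-interior points. Finally, the uniform-integrability step (ii), which you flag as delicate, is obtained in the paper for free from the uniform bound on the Kac--Rice integrand in Step~1, which itself relies on Lemmas~\ref{lem.speedfejer} and~\ref{lem.nondeg}; if you genuinely want to avoid Kac--Rice you would need a substitute, and no such argument is sketched.
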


With some mild assumptions on the topology of the nodal set $\{\psi_\rho=0\}$, one can moreover relax the above assumptions on the regularity of $\psi_\rho$.

\begin{thm}\label{theo.main.C1}
Suppose that $\mu_\rho(dx)=\psi_\rho(x) dx$ where the spectral density $\psi_{\rho}$ is piecewise continuous and that its nodal set can be decomposed as a finite union of intervals and points
\[
\{\psi_\rho=0\}=\bigcup_{i=1}^p [a_i,b_i] \cup \bigcup_{j=1}^q \{c_j\}.
\]
Then, the same asymptotics holds
\[
\lim_{n \to +\infty} \frac{\Esp\left[\mathcal{N}(f_n,[0,2\pi])\right]}{n}= \frac{\lambda(\{\psi_{\rho}=0\})}{\pi \sqrt{2}} + \frac{2\pi - \lambda(\{\psi_{\rho}=0\})}{\pi\sqrt{3}}.
\]
\end{thm}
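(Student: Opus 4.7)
The plan is to deduce Theorem \ref{theo.main.C1} from Theorem \ref{theo.main} via a sandwich argument, exploiting the very rigid structure of $\{\psi_\rho=0\}$ as a finite union of intervals and points. The idea is to approximate $\psi_\rho$ both from above and from below by spectral densities that fulfil the stronger regularity hypothesis of Theorem \ref{theo.main}, while keeping the Lebesgue measure of the associated nodal sets within $O(\epsilon)$ of $\lambda(\{\psi_\rho=0\})$.

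More concretely, fix $\epsilon>0$ small. I would construct approximations $\psi_\rho^{\pm,\epsilon}$ as follows: on each nodal interval $[a_i,b_i]$ suitably enlarged (resp. shrunk) by $\epsilon$, and on an $\epsilon$-neighborhood of each isolated zero $c_j$, set the approximation equal to $0$; on the complement take the mollification $\psi_\rho * \varphi_\eta$ at scale $\eta \ll \epsilon$, and interpolate smoothly to $0$ near the boundary via a bump function. After renormalization, both $\psi_\rho^{+,\epsilon}$ and $\psi_\rho^{-,\epsilon}$ become $\mathcal{C}^\infty$ on an open set of full Lebesgue measure and therefore satisfy the hypothesis of Theorem \ref{theo.main}. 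By construction,
\[
\lambda(\{\psi_\rho^{-,\epsilon}=0\}) \xrightarrow[\epsilon\to 0]{} \lambda(\{\psi_\rho=0\}) \xleftarrow[\epsilon\to 0]{} \lambda(\{\psi_\rho^{+,\epsilon}=0\}),
\]
so that the value of Theorem \ref{theo.main} applied to $\psi_\rho^{\pm,\epsilon}$ converges, as $\epsilon\to 0$, to the target asymptotic of Theorem \ref{theo.main.C1}.

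The main step is then to compare $\Esp[\mathcal{N}(f_n,[0,2\pi])]$ and $\Esp[\mathcal{N}(f_n^{\pm,\epsilon},[0,2\pi])]$ for $n$ large and $\epsilon$ small. Following the strategy outlined in the introduction, I would work with the rescaled processes $g_n(u) = f_n(X + u/n)$ and $g_n^{\pm,\epsilon}(u) = f_n^{\pm,\epsilon}(X + u/n)$ for $X$ uniform on $[0,2\pi]$ and independent of the coefficients. Via a spectral coupling, both families can be realized on a common probability space from the same underlying Gaussian white noise, and for $X$ lying outside the $O(\epsilon)$-region where $\psi_\rho$ and $\psi_\rho^{\pm,\epsilon}$ disagree, the two $\mathcal{C}^1$-limit processes coincide in law and hence have the same expected number of zeros on $[0,2\pi]$. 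The main technical obstacle I anticipate is establishing sufficient uniform integrability (in $n$ and in $\epsilon$) of $\mathcal{N}(g_n,[0,2\pi])$, which is what allows to transfer the weak $\mathcal{C}^1$-convergence of the processes into convergence of expected zero counts; this should follow from a second moment / Kac--Rice-type bound analogous to those used in the proof of Theorem \ref{theo.main}. Combining the sandwich and letting $\epsilon\to 0$ then yields the announced limit.
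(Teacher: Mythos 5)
Your plan to reduce Theorem~\ref{theo.main.C1} to Theorem~\ref{theo.main} via approximation of $\psi_\rho$ by $\mathcal C^{1,\alpha}$ densities with comparable nodal measure is a reasonable instinct, but the comparison step you propose cannot work, and it is precisely the reason the paper does \emph{not} prove the non-universal asymptotics of Theorems~\ref{theo.main} and~\ref{theo.main.C1} via the Salem--Zygmund localization strategy. The localized process $g_n(u)=f_n(X+u/n)$ converges (under $\Prob_X$, $\Prob$-a.s.) to $g_\infty=\sqrt{2\pi\psi_\rho(X)}\,N$; but under the hypotheses of Theorem~\ref{theo.main.C1} the event $\{\psi_\rho(X)=0\}$ has positive $\Prob_X$-probability, and on that event $g_\infty\equiv 0$, so $\mathcal N(g_\infty,[0,2\pi])=+\infty$ and $\Esp_X[\mathcal N(g_\infty,[0,2\pi])]=+\infty$. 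There is therefore no hope of transferring the $\mathcal C^1$-convergence of $g_n$ into convergence of $\Esp_X[\mathcal N(g_n,[0,2\pi])]$ to a finite limit, and consequently no way to identify $\lim_n \Esp[\mathcal N(f_n)]/n$ with any expected functional of the limit process. Relatedly, the uniform-integrability bound you invoke (Proposition~\ref{prop.sup.mom} in the paper) is proved only under the log-integrability hypothesis $\log(\psi_\rho)\in L^{1+\eta}$, which fails precisely because $\psi_\rho$ vanishes on intervals. Finally, note that even the weaker claim that you need --- a comparison between $\Esp[\mathcal N(f_n,[0,2\pi])]$ for $\psi_\rho$ and for $\psi_\rho^{\pm,\epsilon}$ at fixed large $n$, uniformly in $n$ --- is not supplied by any argument in your sketch: the expected zero count is not monotone in the spectral density (the Kac--Rice integrand involves ratios of convolutions), so the ``sandwich'' yields no inequality, and the coupling argument only compares the (ill-defined, see above) limits, not the prelimit expectations.

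The paper's actual proof of Theorem~\ref{theo.main.C1} is a direct Kac--Rice computation, decomposing $[0,2\pi]$ into three regimes: a $\delta$-neighborhood of the finite set $E=\{0\}\cup\bigcup\{a_i,b_i\}\cup\bigcup\{c_j\}$, controlled by the Pritsker--Yeager angular equidistribution bound (Corollary~2.2 of~\cite{PY15}), which gives $O(\delta+\sqrt{\log n/n})$; the positivity region away from $E$, where the kernels $K_n*\psi_\rho$ and $L_n*\psi_\rho$ converge uniformly and the integrand converges to $1/\sqrt{3}$; and the interior of the nodal intervals, where a quantitative Riemann--Lebesgue argument (approximating $\psi_\rho$ by a smooth $\psi_\epsilon$ in $L^1$ and integrating by parts) shows the integrand converges to $1/\sqrt{2}$. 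The conclusion comes from additivity and letting $n\to\infty$, then $\epsilon\to 0$, then $\delta\to 0$. If you want to salvage your reduction strategy, you would need a genuine stability estimate for the Kac--Rice integral under $L^1$ perturbations of $\psi_\rho$, uniform in $n$, rather than an argument via the limit process.
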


\begin{rmk}
As mentioned above, these two first results show that the asymptotics of the number of real zeros do not particularly depend on the decay of the correlation function $\rho$, namely 
\begin{itemize}
\item under the hypotheses of Theorem \ref{theo.main} or \ref{theo.main.C1}, choosing the density $\psi_{\rho}$ as a smooth function with a compact support strictly included in $(0,2\pi)$, the associated correlation function $\rho$ then decays arbitrarily fast at infinity and the asymptotics of the number of zeros is still non-universal since it differs from the one given by Equation \eqref{eq.asym.univ}. 
\item in the opposite case, as detailed in the discussion after Theorem 1 in \cite{ADP19}, there exists some correlation function $\rho$ with arbitrarily slow decay at infinity such that the nodal asymptotics is universal. 
\end{itemize}
It is also remarkable that the large degree asymptotics of the expected number of zeros depends on $\psi_{\rho}$ only through the Lebesgue measure of the its nodal set. Two spectral densities with different shapes and possibly disjoint supports will yield to the same asymptotics as soon as the Lebesgue measure of their zero sets coincide.
\end{rmk}

Choosing in particular the spectral density $\psi_{\rho}$ of the form $\psi_{\rho}(x)=\frac{1}{2a}\mathds{1}_{[-a,a]}(x)$ with $a \in (0,\pi)$, the last Theorem \ref{theo.main.C1} yields the following corollary, which in fact can be seen as a first step towards the more general situation covered by Theorems \ref{theo.main} or \ref{theo.main.C1}.
\begin{cor} \label{cor.creneau}
Suppose that  $\rho(k)= \sin(ka)/ka$ i.e. $\mu_\rho(dx)=\psi_\rho(x) dx$ with $\psi_{\rho}(x)=\frac{1}{2a}\mathds{1}_{[-a,a]}(x)$ with $a \in (0,\pi)$, then we have
\[\
\lim_{n \to +\infty} \frac{\Esp[\mathcal{N}(f_n,[0,2\pi])]}{n}=\frac{2\pi-2a}{\pi\sqrt{2}}+ \frac{2a}{\pi\sqrt{3}}.
\]
\end{cor}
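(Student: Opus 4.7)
The plan is to deduce this corollary as a direct specialization of Theorem \ref{theo.main.C1}, since the rectangular spectral density $\psi_\rho(x) = \frac{1}{2a}\mathds{1}_{[-a,a]}(x)$ visibly satisfies the hypotheses of that theorem. No fresh probabilistic machinery should be needed; the proof reduces to two book-keeping verifications followed by a substitution.

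First I would verify the correspondence between the given correlation $\rho(k) = \sin(ka)/(ka)$ and the spectral density $\psi_\rho$. This is a one-line Fourier computation:
\[
\int_{-\pi}^{\pi} e^{ikx}\,\psi_\rho(x)\,dx \;=\; \frac{1}{2a}\int_{-a}^{a} e^{ikx}\,dx \;=\; \frac{\sin(ka)}{ka},
\]
so that $\mu_\rho(dx)=\psi_\rho(x)\,dx$ is indeed the spectral measure attached to the sequence $(\rho(k))_{k\in\Z}$, via Bochner--Herglotz. Next I would check the structural hypotheses of Theorem \ref{theo.main.C1}: the function $\psi_\rho$ is piecewise continuous on $[-\pi,\pi]$, with only two jump discontinuities at $\pm a$, and its nodal set
\[
\{\psi_\rho = 0\} \;=\; [-\pi,-a)\cup(a,\pi]
\]
is, up to the two endpoint issues (which are negligible for Lebesgue measure), a finite union of closed intervals of the required form. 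In particular, $\lambda(\{\psi_\rho = 0\}) = 2\pi - 2a$.

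Finally, inserting this value into the conclusion of Theorem \ref{theo.main.C1} gives immediately
\[
\lim_{n \to +\infty} \frac{\Esp[\mathcal{N}(f_n,[0,2\pi])]}{n} \;=\; \frac{2\pi-2a}{\pi\sqrt{2}} + \frac{2\pi-(2\pi-2a)}{\pi\sqrt{3}} \;=\; \frac{2\pi-2a}{\pi\sqrt{2}} + \frac{2a}{\pi\sqrt{3}},
\]
which is the desired asymptotics. Since the corollary is obtained by mere substitution, there is no genuine obstacle here; the only subtlety worth flagging is the treatment of the discontinuities of $\psi_\rho$ at $\pm a$, which is precisely why Theorem \ref{theo.main.C1} (rather than the $\mathcal{C}^1$ version Theorem \ref{theo.main}) is the right tool. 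The interest of this special case is conceptual: it offers the first concrete example where the limiting constant depends on the nodal measure of $\psi_\rho$, thereby making transparent the non-universality phenomenon, and it suggests the right ``two-regime'' heuristic ($\sqrt{2}$ vs. $\sqrt{3}$) that underlies the general proof.
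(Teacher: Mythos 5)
Your deduction is logically sound and matches the paper's own framing: the text preceding the corollary states explicitly that ``the last Theorem \ref{theo.main.C1} yields the following corollary.'' However, the paper's actual \emph{proof} of Corollary \ref{cor.creneau} does not proceed by substitution into Theorem \ref{theo.main.C1}. Instead, the authors prove the corollary directly, from scratch, using the Kac--Rice representation \eqref{Kac-Rice formula} together with a three-regime decomposition of $[-\pi,\pi]$: (i) the zone $\dist(x,[-a,a])\geq\delta$ outside the support, where the Fej\'er/$L_n$ kernels concentrate away from the support and the covariance ratios are controlled via Riemann--Lebesgue with explicit $O(1/(n\delta^3))$ error terms; (ii) the zone $[-a+\delta,a-\delta]\setminus[-\delta,\delta]$ inside the support, where standard Fej\'er--Lebesgue convergence gives the integrand $\to 1/\sqrt{3}$; (iii) $\delta$-neighborhoods of the singular points $\{0,\pm a\}$, which are discarded via the root-counting bound from \cite[Corollary 2.2]{PY15}. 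This direct proof is then reused, step by step, as the blueprint for the proof of Theorem \ref{theo.main.C1} in Section \ref{proof-main-Thm} (the paper explicitly says ``mimicking step (iii)\dots,'' ``proceed exactly as in step (ii)\dots,'' ``adapt the computations of step (i)\dots''). So the logical arrow in the paper runs \emph{from} the proof of the corollary \emph{to} the proof of the theorem, whereas your argument runs \emph{from} the statement of the theorem \emph{to} the statement of the corollary. Both are perfectly valid; yours is the cleaner deduction once Theorem \ref{theo.main.C1} is in hand, while the paper's expository choice is to treat the corollary as the pedagogical warm-up that exposes the three-regime mechanism driving the whole section.

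One small point worth making precise: the nodal set of $\psi_\rho=\frac{1}{2a}\mathds{1}_{[-a,a]}$ is $[-\pi,-a)\cup(a,\pi]$, which is a union of \emph{half-open} intervals rather than of closed intervals $[a_i,b_i]$ as the hypothesis of Theorem \ref{theo.main.C1} literally demands. You flag this as ``negligible for Lebesgue measure,'' which is fine in substance (one can redefine $\psi_\rho$ on the null set $\{\pm a\}$, or observe that the proof of Theorem \ref{theo.main.C1} only ever uses the topology of $\{\psi_\rho>0\}$ and the finite exceptional set $E$ of endpoints/atoms, both of which are unchanged). But a referee could reasonably ask you to say a word more, so it is worth stating explicitly that modifying $\psi_\rho$ at the two points $\pm a$ does not alter $\mu_\rho$, $f_n$, or any convolution $K_n*\mu_\rho$, $L_n*\mu_\rho$, and hence does not affect either side of the claimed limit.
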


\begin{rmk}\label{rmk-densite-creneau}
The above corollary entails in particular that for any $\ell \in \left [\frac{2}{\sqrt{3}}, \sqrt{2}\right)$, there exists a spectral density $ \psi_{\rho}$ such that 
\[\
\lim_{n \to +\infty} \frac{\Esp[\mathcal{N}(f_n,[0,2\pi])]}{n}=\ell.
\]
Hence in the case of a spectral measure admitting a spectral density with respect to the Lebesgue measure, the expected number of real zeros has a whole spectrum of possible values. This has to be compared with the purely discrete case studied in \cite{Pau20}, where as recalled above, it is shown that, choosing $\mu_{\rho}$ as purely atomic of the form $\mu_{\rho}=\frac{1}{2}(\delta_{\alpha}+\delta_{-\alpha})$, with $\alpha \not \in \pi \Q$ also yields non-universal nodal asymptotics ranging this time in the interval $[\sqrt{2}, 2]$. 
 \end{rmk}
 
\begin{rmk}\label{rmk-pirhadi}
Note also that, a direct corollary of the proof of Theorem \ref{theo.main} is that if $\psi_{\rho}$ is $\mathcal{C}^1$ with H\"{o}lder derivative on an open set of full Lebesgue measure and that $\lambda\left(\{\psi_{\rho}=0\}\right)>0$, then we have 
\[\
\liminf_{n \to +\infty} \frac{\Esp[\mathcal{N}(f_n,[0,2\pi])]}{n} \geq \frac{2}{\sqrt{3}}.
\]
This lower bound goes in the direction of the conjecture raised in \cite{Pir19a}, asserting that the universal limit $2/\sqrt{3}$ is the minimum possible value for the asymptotics of the expected number of real zeros when dealing with Gaussian  trigonometric polynomials with dependent coefficients.
\end{rmk}

The second main result of this article consists in relaxing the hypotheses of \cite{ADP19} on the spectral density in order to obtain a universal asymptotics for the expected number of real zeros. Recall that in the latter reference, the spectral density $\psi_{\rho}$ was assumed to be continuous and lower bounded by a positive constant. We establish here that the expected asymptotics is universal as soon as $\psi_{\rho}$ satisfies a $\log-$integrability condition, {and with no condition on the singular component $\mu_{\rho}^s$}.
\par
\medskip

\begin{thm}\label{thm.asym.mean} {Suppose that $\mu_\rho(dx)=\mu_{\rho}^s +\psi_\rho(x) dx$} and assume that there exists $\eta \in (0,1)$ such that 
\[
\log(\psi_{\rho})\in L^{1+\eta}([0,2\pi]),
\]
 then
\[
\lim_{n\to +\infty}\Esp\left[ \frac{\mathcal{N}(f_n,[0,2\pi])}{n}\right]= \frac{2}{\sqrt{3}}.
\]
\end{thm}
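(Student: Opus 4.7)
The starting point is the shift identity
\[
\frac{\Esp[\mathcal{N}(f_n,[0,2\pi])]}{n} = \Esp[\mathcal{N}(g_n,[0,2\pi])], \qquad g_n(u) := f_n\!\left(X+\tfrac{u}{n}\right),
\]
where $X$ is an auxiliary uniform variable on $[0,2\pi]$, independent of the coefficients. Conditionally on $X$, the process $g_n$ is centered Gaussian on $[0,2\pi]$; plugging the Bochner--Herglotz representation $\rho(|k-\ell|) = \int e^{i(k-\ell)\theta}\,d\mu_\rho(\theta)$ into the explicit sum defining its covariance, and letting $D_n(t) := \sum_{k=1}^n e^{ikt}$ denote the Dirichlet kernel and $\alpha_n := X+u/n$, $\beta_n := X+v/n$, one obtains
\[
K_n^X(u,v) := \Esp[g_n(u)g_n(v)\mid X] = \frac{1}{2n}\int_{-\pi}^{\pi}\!\Bigl[D_n(\theta+\alpha_n)\overline{D_n(\theta+\beta_n)} + D_n(\theta-\alpha_n)\overline{D_n(\theta-\beta_n)}\Bigr]\, d\mu_\rho(\theta),
\]
with analogous formulas for the covariance entries involving the derivative $g_n'$.

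The first key step is to establish, for $\lambda$-almost every $X \in [0,2\pi]$, the pointwise convergence
\[
K_n^X(u,v) \;\longrightarrow\; 2\pi\, \psi_\rho(X)\, \frac{\sin(u-v)}{u-v},
\]
together with analogous limits for the derivative covariances, whose limit involves $2\pi\psi_\rho(X)$ times the appropriate derivatives of the sinc kernel. Recognizing $\tfrac{1}{n}|D_n|^2$ as the (renormalized) Fejér kernel, these convergences follow from the classical theorem of Fatou on the a.e. convergence of Fejér sums of an arbitrary finite Borel measure on the circle toward the Radon--Nikodym derivative of its absolutely continuous part; crucially, the singular component $\mu_\rho^s$ contributes zero at $\lambda$-a.e. $X$, which is how the absence of any hypothesis on $\mu_\rho^s$ is accommodated. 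The log-integrability assumption in particular implies $\psi_\rho(X) > 0$ for a.e. $X$, making the limiting covariance non-degenerate.

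Rescaling $\tilde g_n := g_n/\sqrt{2\pi\psi_\rho(X)}$, which is $\Prob$-a.s. well defined, the previous convergence together with Gaussianity and a standard tightness criterion yields the conditional convergence in distribution of $\tilde g_n$ in the $\mathcal{C}^1([0,2\pi])$ topology toward the stationary Gaussian sinc process $Z$ of covariance $K_\infty(h)=\sin(h)/h$. Since the functional $\mathcal{N}(\cdot,[0,2\pi])$ is invariant under positive scaling and continuous on $\mathcal{C}^1([0,2\pi])$ at every function with only simple zeros, a property the paths of $Z$ satisfy almost surely, the continuous mapping theorem gives
\[
\mathcal{N}(g_n,[0,2\pi]) \xrightarrow[n\to +\infty]{\mathrm{law}} \mathcal{N}(Z,[0,2\pi]),
\]
and the Kac--Rice formula applied to $Z$ using $K_\infty(0)=1$ and $-K_\infty''(0)=1/3$ yields the value $\Esp[\mathcal{N}(Z,[0,2\pi])] = 2/\sqrt{3}$.

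The final and main obstacle is to upgrade this convergence in law into convergence of expectations, which requires uniform integrability of the sequence $(\mathcal{N}(g_n,[0,2\pi]))_{n\geq 1}$. This is precisely where the full strength of the hypothesis $\log\psi_\rho \in L^{1+\eta}$ enters: in the spirit of Szegő's theorem and following the analogous step of \cite{AP19} in the independent setting, this integrability provides quantitative non-degeneracy estimates for the joint covariance of $(g_n(u), g_n(v), g_n'(u), g_n'(v))$ at distinct points, controlling the small-denominator pathologies that would otherwise plague such bounds. From this one extracts $\sup_n \Esp[\mathcal{N}(g_n,[0,2\pi])^{1+\eta'}] < \infty$ for some $\eta' > 0$, which delivers uniform integrability and hence the announced universal limit $2/\sqrt{3}$.
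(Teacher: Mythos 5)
Your proposal follows the paper's overall skeleton correctly: the shift identity $\Esp[\mathcal{N}(f_n,[0,2\pi])]/n = \Esp[\mathcal{N}(g_n,[0,2\pi])]$, the a.e.\ Fej\'er--Lebesgue covariance asymptotics (which indeed kill off the singular part $\mu_\rho^s$), the Salem--Zygmund-type functional CLT towards $\sqrt{2\pi\psi_\rho(X)}\cdot(\text{sinc process})$ in the $\mathcal{C}^1$ topology, the continuous mapping theorem, and the Kac--Rice computation $\Esp[\mathcal{N}(Z,[0,2\pi])]=2/\sqrt{3}$ for the limit process. Up to this point you are aligned with the paper.

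The gap is in the uniform integrability step, which is the crux of the theorem, and your proposed mechanism for it is not the one that works here. You claim that $\log\psi_\rho\in L^{1+\eta}$ yields quantitative non-degeneracy estimates for the joint Gaussian covariance of $(g_n(u),g_n(v),g_n'(u),g_n'(v))$ at distinct points, controlling small denominators in (implicitly) a Kac--Rice-type bound for higher moments. This is precisely the route the paper's method is designed to avoid: mere log-integrability of $\psi_\rho$ does not give the pointwise lower bounds on covariance determinants that such an argument would need, and the paper explicitly remarks that bounding the Kac--Rice integrand ``imposes non necessary assumptions on the spectral density'' for exactly this reason. What the paper actually does is completely different: first it proves (Lemma on $\log$-moments, $\Esp_{\Prob\otimes\Prob_X}|\log|f_n(X)||^{\gamma}\leq C_\gamma(1+\|\log\psi_\rho\|_{L^\gamma})$ uniformly in $n$) by writing $\Esp[f_n(X)^2\mid X]=2\pi K_n\ast\mu_\rho(X)$ and using Jensen's inequality $\log(K_n\ast\psi_\rho)\geq K_n\ast\log\psi_\rho$ together with convexity of $|\cdot|^\gamma$. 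Then it bounds moments of $\mathcal{N}(g_n,[0,2\pi])$ not via Kac--Rice but via iterated Rolle's theorem, yielding $\Prob\otimes\Prob_X\big(\mathcal{N}(g_n,[0,2\pi])>s\big)\leq \Prob\otimes\Prob_X\big(|g_n(0)|\leq \tfrac{(2\pi)^{\lfloor s\rfloor}}{\lfloor s\rfloor!}R\big)+\Prob\otimes\Prob_X\big(\|g_n^{(\lfloor s\rfloor)}\|_\infty>R\big)$. The second tail is handled by $L^2$-Sobolev embedding and Birkhoff--Khinchine, the first by the logarithmic moment bound via Markov's inequality. This Rolle-plus-small-ball machinery is what converts $\log\psi_\rho\in L^{1+\eta}$ into $\sup_n\Esp[\mathcal{N}(g_n,[0,2\pi])^{1+\eta/2}]<\infty$. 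You should replace your covariance-non-degeneracy argument with this mechanism, as the one you describe would not close under the stated hypotheses.
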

\par
\medskip
\begin{rmk}
Such a $\log-$integrability condition is rather natural if we keep in mind that the limiting case where $\eta=0$ corresponds to the fact that the stationary Gaussian sequences $(a_k)_{k \geq 0}$ and $(b_k)_{k \geq 0}$ are ``purely non-deterministic", see e.g. Chapter 1, p. 9 of \cite{Pal07} and Section 2.1 of \cite{HNTX14}, or equivalently, that these sequences have ``finite entropy", see e.g. \cite{Iha00}. 
\end{rmk}
\par
\medskip
In the case of an absolutely continuous spectral measure, with additional assumptions of regularity and integrability on the spectral density $\psi_{\rho}$, the previous result  can be even strengthened in an almost-sure asymptotics. 
\par
\medskip
\begin{thm}\label{thm.as.asym}
Assume that $\mu_{\rho}$ is purely absolutely continuous, i.e. $\mu_{\rho}(dx)=\psi_{\rho}(x)dx$ and that
\begin{description}
\item[A.1] there exists $\alpha>0$ such that $\psi_{\rho}$ satisfies a Besov regularity property of order $\alpha$, i.e.
\[
\text{for}~\delta>0,~\sup_{|h|\leq \delta} \|\psi_{\rho}(\cdot+h)+\psi_{\rho}(\cdot-h)-2\psi_{\rho}(\cdot)\|_{L^{1}([0,2\pi])}= O(\delta^{\alpha}),\]
\item[A.2] there exists $\gamma >0$ such that 
\[
\frac{1}{\psi_{\rho}(X)} \in L^{\gamma}([0,2\pi]).
\]
\end{description}
Then, $\Prob$-almost surely, we have
\begin{equation} \label{as.asym}
\lim_{n\to +\infty} \frac{\mathcal{N}(f_n,[0,2\pi])}{n}= \frac{2}{\sqrt{3}}.
\end{equation}
\end{thm}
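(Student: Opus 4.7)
The plan is to upgrade the $L^{1}(\Omega)$-convergence of Theorem \ref{thm.asym.mean} to the almost sure statement \eqref{as.asym} by establishing a quantitative variance estimate of the form
\[
\mathrm{Var}\!\left(\frac{\mathcal{N}(f_n,[0,2\pi])}{n}\right) = O\!\left(n^{-\beta}\right), \qquad \beta>0,
\]
and then invoking a Borel--Cantelli argument. Combining Chebyshev's inequality with such a bound and the mean asymptotics of Theorem \ref{thm.asym.mean} yields, for every $\epsilon>0$ and along any polynomial subsequence $n_k=\lfloor k^{\kappa}\rfloor$ with $\kappa\beta>1$,
\[
\sum_{k\ge 1}\Prob\!\left(\left|\frac{\mathcal{N}(f_{n_k},[0,2\pi])}{n_k}-\frac{2}{\sqrt{3}}\right|>\epsilon\right)<+\infty,
\]
hence almost sure convergence along $(n_k)$. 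A final step will consist in interpolating between consecutive indices $n_k\le n<n_{k+1}$ using the Bernstein-type bound $\mathcal{N}(f_n,[0,2\pi])\le 2n$ together with a uniform $\mathcal{C}^{1}$-comparison of $f_n$ and $f_{n_k}$ on the rescaled time interval; alternatively, if the exponent $\beta$ can be chosen larger than $1$, the summability holds directly along all $n$ and no subsequence is needed.

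The heart of the proof is thus the variance bound. Starting from the Salem--Zygmund identity already used in Theorem \ref{thm.asym.mean},
\[
\frac{\mathcal{N}(f_n,[0,2\pi])}{n}=\frac{1}{2\pi}\int_0^{2\pi}\mathcal{N}\!\left(g_n^{(x)},[0,2\pi]\right)dx,\qquad g_n^{(x)}(u):=f_n\!\left(x+u/n\right),
\]
squaring and taking expectation yields the double integral representation
\[
\Esp\!\left[\left(\frac{\mathcal{N}(f_n)}{n}\right)^{\!2}\,\right]=\frac{1}{(2\pi)^{2}}\int_{0}^{2\pi}\!\!\int_{0}^{2\pi}\Esp\!\left[\mathcal{N}(g_n^{(x)},[0,2\pi])\,\mathcal{N}(g_n^{(y)},[0,2\pi])\right]dx\,dy.
\]
The plan is then to analyse the joint law of the pair $(g_n^{(x)},g_n^{(y)})$ in $\mathcal{C}^{1}([0,2\pi])^{2}$: a direct covariance computation, combined with a Riemann--Lebesgue type cancellation whose speed is quantified by the Besov modulus of $\psi_{\rho}$ from assumption \textbf{A.1}, should give that $(g_n^{(x)},g_n^{(y)})$ converges in distribution, for almost every $(x,y)$ with $x\neq y$, towards a pair of \emph{independent} stationary centered Gaussian processes with variances $\psi_{\rho}(x)$ and $\psi_{\rho}(y)$. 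Integrating against $dx\,dy$ and using the fact that the expected number of zeros of each marginal limit process equals $2/\sqrt{3}$ by scale invariance, one recovers $(2/\sqrt{3})^{2}$ for the limit of the second moment, at a rate governed by the Besov exponent $\alpha$.

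The main obstacle will be uniform integrability in the double integral, i.e. dominating $\Esp[\mathcal{N}(g_n^{(x)})\mathcal{N}(g_n^{(y)})]$ by an integrable function of $(x,y)$ uniformly in $n$, in order to justify passage to the limit with a quantitative rate. Any such moment bound on the zero count of the rescaled process inevitably involves a lower bound on the determinant of the local covariance matrix of $(g_n^{(x)}(u),(g_n^{(x)})'(u))$, which, after rescaling and a Riemann-sum approximation, is controlled from below by $\psi_{\rho}(x)$ itself. This is precisely where hypothesis \textbf{A.2} enters: the assumption $1/\psi_{\rho}\in L^{\gamma}([0,2\pi])$ provides exactly the integrability against $dx$ that is missing to convert the pointwise asymptotic independence into a quantitative $L^{1}$ estimate on the double integral. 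Calibrating the Besov exponent $\alpha$ of \textbf{A.1} and the integrability exponent $\gamma$ of \textbf{A.2} so as to reach an algebraic rate $n^{-\beta}$ in the variance, and then running the Borel--Cantelli--subsequence scheme described above, will deliver the claimed almost sure convergence \eqref{as.asym}.
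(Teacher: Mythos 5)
Your plan and the paper's proof are genuinely different strategies. The paper never estimates $\mathrm{Var}_{\Prob}\bigl(\mathcal{N}(f_n)/n\bigr)$. Instead it uses the identity $\mathcal{N}(f_n,[0,2\pi])/n=\Esp_X\bigl[\mathcal{N}(g_n,[0,2\pi])\bigr]$ to turn the problem into a \emph{first} moment under $\Prob_X$, conditionally on the random coefficients; then, having the $\Prob$-a.s.\ convergence in distribution of $\mathcal{N}(g_n)$ under $\Prob_X$ (Theorem \ref{thm.SZ.func} and Proposition \ref{prop.conv.dist}(a)), the whole work consists in proving the $\Prob$-a.s.\ equi-integrability $\sup_n\Esp_X\bigl[\mathcal{N}(g_n,[0,2\pi])^{1+\eta/2}\bigr]<+\infty$. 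There, A.1 yields the quantitative Fej\'er approximation $\|K_n\ast\psi_\rho-\psi_\rho\|_{L^1}=O(n^{-\alpha/2})$, A.2 yields the anti-concentration $\Prob_{X,N}\bigl(|\sqrt{2\pi\psi_\rho(X)}N|\le\delta\bigr)=O\bigl(\delta^{2\gamma/(2\gamma+1)}\bigr)$, and these feed a quantitative (a.s.) version of Theorem \ref{prop.conv.fc}, combined with iterated Rolle, Sobolev embedding, Lemma \ref{lm.est.puiss}, and a Borel--Cantelli argument along $n^{25}$ at the level of characteristic functions. No second moment of the zero count ever appears.

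Your proposal, by contrast, rests on the bound $\mathrm{Var}_\Prob\bigl(\mathcal{N}(f_n)/n\bigr)=O(n^{-\beta})$, and this is where a genuine gap lies. To extract it from the double integral
$\Esp[(\mathcal{N}(f_n)/n)^2]=\frac{1}{(2\pi)^2}\iint\Esp[\mathcal{N}(g_n^{(x)})\mathcal{N}(g_n^{(y)})]\,dx\,dy$
one needs (i) a quantified decorrelation of $(g_n^{(x)},g_n^{(y)})$ that survives the $dx\,dy$ integration, including the near-diagonal region $|x-y|\lesssim 1/n$, where the two rescaled processes are strongly correlated and the joint covariance matrix degenerates; and (ii) a uniform-in-$n$, integrable-in-$(x,y)$ bound on the cross moment $\Esp[\mathcal{N}(g_n^{(x)})\mathcal{N}(g_n^{(y)})]$, which is a two-point Kac--Rice estimate whose denominator vanishes both when $\psi_\rho(x)\to 0$ and when $|x-y|\to 0$. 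Second-moment estimates for Gaussian zero counts of this type are known to be substantially harder than first-moment estimates, and the hypotheses A.1 and A.2 are calibrated for one-point quantities ($\|K_n\ast\psi_\rho-\psi_\rho\|_{L^1}$, $\Esp_X|\log\psi_\rho(X)|^\gamma$), not for the off-diagonal determinant lower bounds a two-point Kac--Rice formula requires. Neither of these ingredients is supplied, so the claimed rate $n^{-\beta}$ is not established. A secondary gap is the interpolation between consecutive indices $n_k\le n<n_{k+1}$: the $\mathcal{C}^1$-comparison you invoke is not made precise, while in the paper the analogous passage from the subsequence $n^{25}$ to all $m$ is carried out via Birkhoff--Khinchine in $L^2(\Prob_X)$ (see Lemma \ref{lem.birk}), which is what actually makes the subsequence argument close under the hypotheses at hand.
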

\begin{rmk}
Note that the Besov-type Assumption A.1 is naturally satisfied if the spectral density $\psi_{\rho}$ is H\"older continuous, and that Assumption A.2 on the existence of a negative moment implies the log-integrability condition required in the above Theorem \ref{thm.asym.mean}. Note moreover that these two hypotheses are weaker than the ones made \cite{ADP19}. In particular, Theorem \ref{thm.as.asym} is valid in the emblematic case of Gaussian coefficients that are increments of fractional Brownian motion of any Hurst parameter. Indeed, in this last case, the spectral density in smooth except at zero and is lower bounded by a positive constant, so that it trivially admits negative moments. 
\end{rmk}
\par 
\medskip
The proofs of Theorems \ref{thm.asym.mean} and \ref{thm.as.asym} above are based on Central Limit Theorems of Salem--Zygmund type, which extend the recent results of \cite{AP19} obtained in the independent case to the setting of dependent coefficients. Indeed, in the spirit of Theorem 3.1.1 of \cite{SZ54}, almost surely in the random coefficients, when evaluated at a uniform and independent random point $X$, the sequence $f_n(X)$ converges in distribution to a mixture of Gaussian variables. Note that in the two following statements, no assumption is required on the spectral density. 
\par 
\medskip
\begin{thm}\label{prop.conv.fc} $\Prob$-almost surely, for all $t\in \mathbb R$, we have
\[
\lim_{n \to +\infty} \Esp_X\left[e^{itf_n(X)}\right]=\frac{1}{2\pi} \int_0^{2\pi} e^{-\frac{t^2}{2}{\times 2\pi}\psi_{\rho}(x)}dx =\Esp_{X ,N}\left[e^{it \sqrt{{2\pi}\psi_{\rho}(X)}N}\right],
\]
where $N$ is a standard Gaussian variable, independent of $X$.
In other words, $\Prob$-almost surely, the sequence of random variables $f_n(X)$ converges in distribution under $\Prob_X$ towards $\sqrt{{2\pi}\psi_{\rho}(X)}N$. 
\end{thm}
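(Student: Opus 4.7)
The plan is to use Fubini's theorem to rewrite the conditional characteristic function as a deterministic integral of the random function $f_n$,
\[
F_n(\omega):=\Esp_X\!\left[e^{itf_n(X)}\right]=\frac{1}{2\pi}\int_0^{2\pi}e^{itf_n(x)}dx,
\]
and to show first that $F_n \to L(t):=\tfrac{1}{2\pi}\int_0^{2\pi} e^{-\pi t^2\psi_\rho(x)}dx$ in $L^2(\Prob)$ for each fixed $t$, then upgrade this to an almost sure statement valid simultaneously for every $t\in\mathbb R$.

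For the mean, Gaussianity of $f_n(x)$ gives $\Esp[e^{itf_n(x)}]=e^{-t^2\sigma_n^2(x)/2}$, with
\[
\sigma_n^2(x):=\Esp[f_n(x)^2]=\sum_{|m|<n}\Bigl(1-\tfrac{|m|}{n}\Bigr)\rho(m)\cos(mx)=(K_n\ast\mu_\rho)(x),
\]
the Fejér-convolved spectral measure. Fejér's theorem, applied to the Lebesgue decomposition $\mu_\rho=\psi_\rho\,d\lambda+\mu_\rho^s$, yields $\sigma_n^2(x)\to 2\pi\psi_\rho(x)$ for $\lambda$-a.e.\ $x$, and dominated convergence (using $e^{-t^2\sigma_n^2/2}\leq 1$) gives $\Esp[F_n]\to L(t)$.

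For the variance, Fubini combined with the joint Gaussianity of $(f_n(x),f_n(y))$ yields
\[
\mathrm{Var}(F_n)=\frac{1}{(2\pi)^2}\iint_{[0,2\pi]^2}e^{-\frac{t^2}{2}(\sigma_n^2(x)+\sigma_n^2(y))}\bigl(e^{t^2 C_n(x,y)}-1\bigr)dxdy,
\]
where $C_n(x,y):=\Esp[f_n(x)f_n(y)]$. Since $|C_n(x,y)|\leq \tfrac12(\sigma_n^2(x)+\sigma_n^2(y))$ by Cauchy-Schwarz, the integrand is uniformly bounded by $2$. The key step is the pointwise decay $C_n(x,y)\to 0$ for $\lambda^{\otimes 2}$-a.e.\ $(x,y)\in[0,2\pi]^2$. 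Decomposing $\mu_\rho=\mu_\rho^c+\mu_\rho^{pp}$ into its continuous and pure-point parts: the continuous contribution $C_n^c(x,y)$ is dominated by $\tfrac{2}{n|1-e^{i(x-y)}|}\sum_{|m|<n}|\hat\mu_\rho^c(m)|$, which vanishes for every $x\neq y$ since $\tfrac{1}{n}\sum|\hat\mu_\rho^c(m)|^2\to 0$ (Wiener's lemma, as $\mu_\rho^c$ is atomless) combined with Cauchy-Schwarz; each atom of $\mu_\rho^{pp}$ contributes $O(1/n)$ outside a $\lambda^{\otimes 2}$-null set. Dominated convergence then yields $\mathrm{Var}(F_n)\to 0$.

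To upgrade $L^2(\Prob)$-convergence to almost sure convergence for every $t$, fix a countable dense $T\subset\mathbb R$ and apply Chebyshev and Borel-Cantelli along a polynomial subsequence, bridging to the full sequence via an $L^2$ control of the telescoping increments $F_{n+1}-F_n$. On the resulting full-probability event, the equi-Lipschitz bound
\[
|F_n(t_1,\omega)-F_n(t_2,\omega)|\leq |t_1-t_2|\sqrt{\tfrac{1}{2\pi}\int_0^{2\pi}f_n(x,\omega)^2 dx},
\]
whose right-hand side is almost surely bounded uniformly in $n$ by Birkhoff's ergodic theorem applied to the stationary sequence $(a_k^2+b_k^2)_{k\geq 1}$, extends convergence from $T$ to every $t\in\mathbb R$; equivalently, Lévy's continuity theorem applies since $L$ is continuous at $0$. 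The main technical obstacle will be the off-diagonal a.e.\ decay of $C_n$ in full generality, without any regularity assumption on the spectral measure.
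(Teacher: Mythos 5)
Your overall architecture matches the paper's: write $F_n:=\Esp_X\bigl[e^{itf_n(X)}\bigr]$ as a deterministic average, show $\Esp[F_n]\to L(t)$ by Fej\'er--Lebesgue and dominated convergence, show $\mathrm{Var}(F_n)\to 0$, pass to a.s.\ convergence via Chebyshev/Borel--Cantelli along a subsequence, and close the gaps by Birkhoff--Khinchine. Your variance identity is correct and, after the $1$-Lipschitz bound on $e^{-\cdot}$, reduces exactly to the paper's $\Delta_n\leq t^2\,\Esp_{X,Y}\bigl|\Esp[f_n(X)f_n(Y)]\bigr|$. The proposal diverges from the paper in how it makes this go to zero, and this is where it has a genuine gap.

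You want to prove pointwise a.e.\ decay $C_n(x,y)\to 0$ and then conclude by dominated convergence. Two problems. First, the pure-point argument is not established: the bound for a single atom, $c_j/(n|\sin((x-\alpha_j)/2)\sin((y-\alpha_j)/2)|)$, need not be summable in $j$ for a.e.\ $(x,y)$ when atoms accumulate (e.g.\ a dense atom set with $\sum c_j<\infty$), so "each atom contributes $O(1/n)$" does not by itself yield decay of the sum; a more careful $\varepsilon$-localization, essentially the one in Lemma~\ref{lm.polfej} of the paper, is what is needed. Second, and more fundamentally, even granting $C_n\to 0$ a.e., dominated convergence gives only the qualitative statement $\mathrm{Var}(F_n)\to 0$ with no rate. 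Wiener's lemma for the atomless part has no universal speed, so none comes out of your argument. This stalls the next step: to run Chebyshev + Borel--Cantelli "along a polynomial subsequence" and still be able to bridge, you need $\mathrm{Var}(F_n)=O(n^{-\delta})$ for a fixed $\delta>0$. Without a rate you are forced onto a possibly very sparse extracted subsequence $(n_k)$, and neither your telescoping bridge nor the Birkhoff bound $\Esp_X[|f_{n_k}(X)-f_m(X)|^2]^{1/2}\leq C\sqrt{1-n_k/m}$ controls the gaps when $n_{k+1}/n_k$ is unbounded.

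The paper avoids pointwise decay entirely: Lemma~\ref{lm.polfej} gives a quantitative $\varepsilon$-split of $\int K_n(x-u,y-u)\,\mu_\rho(du)$ and Lemma~\ref{lm.cov} then yields $\Esp_{X,Y}|\Esp[f_n(X)f_n(Y)]|=O(n^{-1/6})$, with no decomposition of $\mu_\rho$; this rate is precisely what makes the subsequence $n^7$ and the Birkhoff bridge work. A cleaner quantitative route is also available: integrating the pointwise domination $|\Esp[f_n(x)f_n(y)]|\leq\int|K_n(x-u,y-u)|\,\mu_\rho(du)$ over $(x,y)$ and applying Fubini and translation-invariance gives $\iint|C_n|\leq\iint|K_n(v,w)|\,dv\,dw=\frac{1}{n}\|D_n\|_{L^1}^2=O((\log n)^2/n)$, with no continuous/pure-point splitting at all. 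As written, though, your variance step yields no rate, and the proposal does not close.
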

\begin{rmk}
We first observe that, in the independent case where $\rho(k)=\delta_0(k)$ i.e. $\psi_\rho \equiv 1/2\pi$ on $[-\pi, \pi]$, we recover the Central limit Theorem by Salem--Zygmund with a standard Gaussian limit distribution. Note that the same central asymptotics was recently observed in another model of 
dependent coefficients obtained via arithmetic functions,  see \cite{BNR20}. 
Going back to our model, in the non-independent case, i.e. if  $\psi_\rho$ in non-constant,  then the limit in distribution in Theorem \ref{prop.conv.fc} is not Gaussian anymore but rather a continuous mixture of Gaussian distributions and thus provides a natural instance where Salem--Zygmund central convergence fails.  
Finally, note that if $\psi_{\rho} \equiv 0$, i.e. $\mu_{\rho}$ is purely singular, the above limit is trivial so that another renormalization is needed.
\end{rmk}
\par 
\medskip
This last result is in fact the consequence of the following more general functional Central Limit Theorem which is the analogue of Theorem 3 in \cite{AP19}. As above, no assumption is required here on the spectral measure $\mu_{\rho}$.

\par 
\medskip
\begin{thm} \label{thm.SZ.func} 
$\Prob$-almost surely, the localized process $(g_n(t))_{t \in [0,2\pi]}:=\left(f_n\left(X+\frac{t}{n}\right)\right)_{t \in [0,2\pi]}$ converges in distribution under $\Prob_X$ for the $\mathcal{C}^1$ topology to a limit process $(g_{\infty}(t))_{t\in[0,2\pi]}$ given by 
$$g_{\infty}:=\sqrt{{2\pi}\psi_{\rho}(X)}N,$$
where $N=(N_t)_{t\in[0,2\pi]}$ is the standard Gaussian process with $\sin_c$ covariance function, independent of the uniform variable $X$.
\end{thm}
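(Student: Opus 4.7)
The proof follows the classical two-step Prokhorov strategy: establishing convergence of the finite-dimensional distributions of $g_n$ under $\Prob_X$, combined with tightness of the family $\{g_n\}$ in the $\mathcal{C}^1([0, 2\pi])$-topology. The approach adapts the one developed in \cite{AP19} for the independent case, the new ingredient being the emergence of the local spectral density $\psi_\rho$ in the limiting covariance through a Fej\'er-type summation of the correlation sequence $\rho$.

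For the finite-dimensional convergence, fix $d \geq 1$, $s = (s_1, \ldots, s_d) \in \mathbb{R}^d$, and $t_1, \ldots, t_d \in [0, 2\pi]$, and set $L_n(y) := \sum_{j=1}^d s_j f_n(y + t_j/n)$. Using the $2\pi$-periodicity of $f_n$, the joint characteristic function under $\Prob_X$ factorizes as
\[
\Esp_X\!\Big[\exp\Big(i\sum_{j=1}^d s_j g_n(t_j)\Big)\Big] = \frac{1}{2\pi}\int_0^{2\pi} e^{iL_n(y)}\, dy,
\]
and we aim to show that, $\Prob$-almost surely, this converges to $\frac{1}{2\pi}\int_0^{2\pi}\exp(-\pi s^T\Sigma s\,\psi_\rho(y))\,dy$, where $\Sigma_{ij} = \sin_c(t_i - t_j)$. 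The strategy is to partition $[0,2\pi]$ into mesoscopic intervals $I_\ell$ of length $\delta_n$ with $1/n \ll \delta_n \ll 1$, and to establish that on each $I_\ell$ centered at a Lebesgue point $y_\ell$ of $\psi_\rho$, the localized integral $\frac{1}{\delta_n}\int_{I_\ell} e^{iL_n(y)}\,dy$ converges to $\exp(-\pi s^T\Sigma s\,\psi_\rho(y_\ell))$. A Riemann-sum argument then yields the global limit. The appearance of $\psi_\rho(y_\ell)$ in the local variance stems from the direct Gaussian computation
\[
\Esp\bigl[f_n(y+t_j/n)\, f_n(y+t_{j'}/n)\bigr] \xrightarrow[n\to\infty]{} 2\pi\psi_\rho(y)\sin_c(t_j - t_{j'}),
\]
valid at every Lebesgue point $y$ of $\psi_\rho$, which in turn boils down to the classical Fej\'er convergence
\[
\sum_{|p|\le n-1}\Big(1 - \frac{|p|}{n}\Big)\rho(p)\,e^{ip\phi} \xrightarrow[n\to\infty]{} 2\pi\psi_\rho(\phi) \quad \text{for a.e.\ } \phi.
\]
The $L^2$-identification of the local variance then propagates to the full characteristic function via a Salem--Zygmund-type averaging argument, in the spirit of \cite{SZ54} and \cite{AP19}.

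For tightness in $\mathcal{C}^1([0,2\pi])$, we rely on the explicit Parseval identity
\[
\Esp_X\bigl[(g_n(t) - g_n(s))^2\bigr] = \frac{1}{n}\sum_{k=1}^n (a_k^2 + b_k^2)\bigl[1 - \cos(k(t-s)/n)\bigr] \leq C_\omega (t-s)^2,
\]
where the last bound holds $\Prob$-almost surely uniformly in $n$ thanks to the weighted ergodic theorem applied to the stationary sequence $(a_k^2 + b_k^2)_{k \geq 1}$, which yields $n^{-3}\sum_{k=1}^n k^2(a_k^2+b_k^2) \to 2/3$. An entirely analogous computation gives $\Esp_X[(g_n'(t)-g_n'(s))^2] \leq C'_\omega(t-s)^2$. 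By Kolmogorov's tightness criterion applied with moment exponent $2$ and regularity exponent $1$, this yields tightness of both $g_n$ and $g_n'$ in $\mathcal{C}^0$, hence of $g_n$ in $\mathcal{C}^1$.

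The main obstacle lies in the finite-dimensional convergence step: although the Fej\'er convergence of the Fourier series of $\mu_\rho$ to $2\pi \psi_\rho$ is only pointwise a.e., it must be integrated against the highly oscillating factor $e^{iL_n(y)}$. Carrying out the joint double limit $n \to \infty$, $\delta_n \to 0$ requires uniform moment control on $L_n$, and the whole argument must be made almost sure in the coefficient randomness; this is precisely where the full Gaussian/ergodic structure of the stationary sequence $(a_k, b_k)_{k \geq 1}$ becomes crucial.
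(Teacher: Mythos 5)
Your tightness argument is essentially identical to the paper's (Proposition~\ref{prop.tight}): the Parseval identity you write is the same as the one in the paper (using $1-\cos\theta = 2\sin^2(\theta/2)$), and the uniform-in-$n$ almost-sure bound on the averaged coefficients comes from Birkhoff--Khinchine exactly as in the paper.

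The gap is in the finite-dimensional convergence. You reduce, correctly, to showing that $\Prob$-a.s., $\frac{1}{2\pi}\int_0^{2\pi}e^{iL_n(y)}\,dy$ converges to the asserted limit, and you propose a mesoscopic decomposition of $[0,2\pi]$ into intervals $I_\ell$ of length $\delta_n$ around Lebesgue points of $\psi_\rho$, hoping to pass to the limit locally and then sum. But the central step — that $\Prob$-a.s., $\frac{1}{\delta_n}\int_{I_\ell}e^{iL_n(y)}\,dy\to\exp(-\pi s^T\Sigma s\,\psi_\rho(y_\ell))$ for each $\ell$ — is precisely the almost-sure Salem--Zygmund concentration that constitutes the heart of the theorem, and you give no mechanism for it. Asserting that ``the $L^2$-identification of the local variance propagates via a Salem--Zygmund-type averaging argument'' restates the goal rather than proving it, and your final paragraph explicitly labels this as the main obstacle without resolving it. What is missing is any quantitative variance estimate that would feed a Borel--Cantelli argument. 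The paper supplies exactly this: Lemma~\ref{lem.rep.Z} represents $\Esp[Z_n(X,t,\lambda)^2]$ as $2\pi\,K_n^{t,\lambda}\ast\mu_\rho(X)$ for an explicit polarized Fej\'er-type kernel, the two-point estimate of Lemmas~\ref{lm.polfej} and~\ref{lm.cov} then yields $\Esp\big|\Esp_X[e^{iZ_n}]-\Esp_X[e^{-\frac12\Esp[Z_n^2]}]\big|^2 = O(n^{-1/6})$, and a.s.\ convergence follows along the subsequence $n^7$ by Borel--Cantelli, filled in between by the $O(m^{-1/14})$ interpolation Lemma~\ref{lem.birk2}. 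Without some such explicit rate on the $L^2$ concentration, the mesoscopic decomposition has no leverage: the double tension between needing $\delta_n\gg 1/n$ (so the local CLT stabilizes) and $\delta_n\to 0$ (so $\psi_\rho$ is approximately constant, which only holds in the a.e.\ Lebesgue-point sense) cannot be managed. The paper's global kernel approach sidesteps this tension entirely, which is why it is preferred; if you want to make your mesoscopic route rigorous, you would still need to develop estimates of the same strength as Lemma~\ref{lm.cov}, at which point the decomposition becomes superfluous.
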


The plan of the article is the following. In the next Section \ref{sec.nonuniversal}, after recalling some basics on trigonometric kernels and Kac--Rice formula, we give the detailed proofs of Theorem \ref{theo.main} and \ref{theo.main.C1}. Then, Section \ref{sec.SZCLT} is devoted to the proofs of both Theorems \ref{prop.conv.fc} and \ref{thm.SZ.func}, i.e. the Central limit Theorems à la Salem--Zygmund. Finally, in the last Section \ref{sec.nodal}, we detail how these last theorems allow to deduce the universality of the nodal asymptotics, first under expectation and then in an almost sure sense. For the readability of the paper, some of the technical estimates and lemmas have been postponed in Appendix in Section \ref{sec.appendix}.

\section{Nodal asymptotics with a vanishing spectral  density}\label{sec.nonuniversal}

The object of this section is to give the detailed proof of Theorems \ref{theo.main} and \ref{theo.main.C1}. In order to do so, let us first recall some basics on trigonometric kernels and the celebrated Kac--Rice formula, which allows to express the expected number of zeros as an explicit functional of the correlation functions.

\subsection{Preliminaries on trigonometric kernels and Kac--Rice formula}\label{preli-Kac}

Recall that $\rho$ is the covariance function of the independent stationary sequences $(a_k)$ and $(b_k)$, i.e. $\mathbb E[a_k a_l]=\mathbb E[b_k b_l] =\rho(k-l)$ and that $\mu_{\rho}$ is the associated symmetric spectral probability measure via Bochner--Herglotz Theorem, i.e.
\[
\rho(k)=\int_{-\pi}^{\pi} e^{-i k x} \mu_{\rho}(dx).
\]
We adopt the following normalization conventions, if $f$ and $g$ bounded $2\pi-$periodic functions, $k \in \mathbb Z$ 
\[
f \ast g(x) = \frac{1}{2\pi} \int_{-\pi}^{\pi} f(x-y)g(y)dy, \quad \hat{f}(k)= \frac{1}{2\pi}  \int_{-\pi}^{\pi} e^{-i k x} f(x)dx, 
\]
so that 
\[
\widehat{f \ast g}(k) = \hat{f}(k) \hat{g}(k).
\]
The correlation functions associated to our model can be expressed in terms of trigonometric kernels. Namely, if the function $f_n$ is given by Equation \eqref{def.model}, then we have
\begin{equation}
\begin{array}{ll}
\mathbb{E}\left[f_n^2(x)\right]& \displaystyle{=\frac{1}{n}\sum_{k,l=1}^n \Esp\left[a_k a_l\right]\cos(k  x)\cos(l x)+\Esp\left[b_k b_l\right]\sin(k  x)\sin(l x)}\\
& \displaystyle{=\frac{1}{n}\sum_{k,l=1}^n \rho(k-l)\cos((k-l)x)
=\sum_{r=-n}^n \left(1-\frac{|r|}{n}\right)\rho(r)e^{i r x}= 2\pi \, K_n \ast \mu_\rho(x),}\\
\end{array}
\label{eq.corel1}
\end{equation}
where $K_n$ denotes the celebrated Fejer Kernel given by
\[
K_n(x):=  \sum_{r=-n}^n \left(1-\frac{|r|}{n}\right)e^{i r x}= \frac{1}{n}\left(\frac{\sin\left(\frac{nx}{2}\right)}{\sin\left(\frac{x}{2}\right)}\right)^2.
\]
Similarly, setting $ \alpha_n:=\frac{6}{(n+1)(2n+1)}\sim \frac{3}{n^2}$, we have 
\begin{equation}
\mathbb{E}\left[{f'_n}(x)^2\right]=\frac{2\pi}{\alpha_n} L_n\ast \mu_\rho(x),
\label{eq.corel2}
\end{equation}
where
\[
\begin{array}{l}
\displaystyle{L_n(x)}  \displaystyle{:=\frac{\alpha_n}{n}\left|\sum_{k=0}^n k e^{i k x}\right|^2 =  \frac{\alpha_n}{n}\sum_{k,\ell=1}^{n}k\ell \cos((k-\ell)x)} 
 =\displaystyle{\frac{\alpha_n}{n} \frac{(n+1)^2}{4 \sin^2\left(\frac x 2\right)}\left|1-\frac{(1-e^{i (n+1)x})e^{-i n x}}{(n+1)(1-e^{i x})}\right|^2.}
\end{array}
\]
The functions $K_n$ and $L_n$ satisfy
\[
\frac{1}{2\pi} \int_{-\pi}^{\pi} K_n(x)dx = 1, \qquad \frac{1}{2\pi} \int_{-\pi}^{\pi} L_n(x)dx = 1.
\]
\noindent
Finally we will also have 
\begin{equation}
\Esp\left[f_n(x) f'_n(x)\right]=\pi \, K_n'\ast\mu_\rho(x),
\label{eq.corel3}
\end{equation}
with 
\[
K_n'(x) = \frac{2}{n} \left( \frac{\sin(nx/2)}{\sin(x/2)}\right)\left( \frac{n \cos(nx/2)}{2\sin(x/2) } -  \frac{ \sin(nx/2)\cos(x/2)}{2\sin(x/2)^2 } \right).
\]
Both functions $K_n$ and $L_n$ are in fact good regularizing kernels as shown in  Lemma 1 of \cite{ADP19}. In the sequel, we will make use of the following uniform estimates.
\medskip
\begin{lma}\label{lm.prop.L}The two kernels $K_n$, $L_n$ are non-negative and even, the derivative $K_n'$ is odd and these three functions satisfy the following upper bounds
\begin{enumerate}
\item For all integer $n \geq 1$
\[
\sup_{x \in [-\pi,\pi]} K_n(x) \leq  n, \quad \sup_{x\in [-\pi,\pi]} L_n(x) \leq  n, \quad \sup_{x\in [-\pi,\pi]} \frac{|K_n'(x)|}{n} \leq   n.
\]
\item There exists a constant $C>0$ such that uniformly in $x \in [-\pi,\pi]$
 \[
 \begin{array}{l}
 \displaystyle{K_n(x) \leq \frac{C}{n x^2} , \quad  L_n(x) \leq C\left(\frac{1}{n x^2}  + \frac{1}{n^2|x| ^3} +\frac{1}{n^3x^4}\right), \quad \frac{|K_n'(x)|}{n}  \leq C \left(\frac{1}{n x^2}  + \frac{1}{n^2|x|^3}\right)}.
 \end{array}
 \]
\end{enumerate}
\end{lma}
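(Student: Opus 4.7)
The plan is as follows. Parity and non-negativity are immediate from the closed-form expressions already displayed in the preamble: $K_n$ is a squared modulus divided by $n$, so is non-negative and even; $L_n=\frac{\alpha_n}{n}\bigl|\sum_{k=0}^n k e^{ikx}\bigr|^2$ is manifestly non-negative, and evenness follows by taking complex conjugation inside the modulus; oddness of $K_n'$ is inherited from the evenness of $K_n$. So point one above reduces to three uniform bounds, and point two to three decay bounds.

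For the uniform $n$-bounds, I would exploit the finite Fourier expansions. Writing $K_n(x)=\sum_{r=-n}^n (1-|r|/n)e^{irx}$ and applying the triangle inequality gives $\sup_x K_n(x)\leq \sum_{r=-n}^n(1-|r|/n)=n$. Term-wise differentiation then yields $|K_n'(x)|\leq \sum_r(1-|r|/n)|r|=(n^2-1)/3$, hence $|K_n'|/n\leq n/3\leq n$. For $L_n$, I would use $\bigl|\sum_{k=0}^n ke^{ikx}\bigr|^2\leq (\sum_{k=0}^n k)^2=n^2(n+1)^2/4$ and substitute $\alpha_n=6/((n+1)(2n+1))$, obtaining $L_n(x)\leq 3n(n+1)/(2(2n+1))\leq n$, the last inequality reducing to $3(n+1)\leq 2(2n+1)$, valid for $n\geq 1$.

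For the decay estimates, the key elementary input is the standard inequality $|\sin(x/2)|\geq |x|/\pi$ for $|x|\leq \pi$. For $K_n$ one writes directly $K_n(x)\leq \frac{1}{n\sin^2(x/2)}\leq \frac{\pi^2}{nx^2}$. For $K_n'$, the factored expression recalled above splits into two summands, each bounded via $|\sin|,|\cos|\leq 1$, yielding $\frac{|K_n'(x)|}{n}\leq \frac{1}{n\sin^2(x/2)}+\frac{1}{n^2|\sin(x/2)|^3}$, which becomes the claimed $\frac{C}{nx^2}+\frac{C}{n^2|x|^3}$ after the sine estimate. For $L_n$, I would start from the product form $L_n(x)=\frac{\alpha_n(n+1)^2}{4n\sin^2(x/2)}|1-A|^2$ with $A=(1-e^{i(n+1)x})e^{-inx}/((n+1)(1-e^{ix}))$, note that $|A|=\frac{|\sin((n+1)x/2)|}{(n+1)|\sin(x/2)|}\leq \frac{1}{(n+1)|\sin(x/2)|}$, and expand $|1-A|^2\leq 1+2|A|+|A|^2$. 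Substituting this and $\alpha_n\asymp n^{-2}$ produces exactly the three announced regimes $\frac{1}{n\sin^2(x/2)}$, $\frac{1}{n^2|\sin(x/2)|^3}$ and $\frac{1}{n^3\sin^4(x/2)}$, which translate to the stated bounds via $|\sin(x/2)|\geq |x|/\pi$.

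The proof is a sequence of routine estimates, so there is no real obstacle. The only point requiring some care is the $L_n$ decay bound: one must keep the cross term in $|1-A|^2\leq 1+2|A|+|A|^2$ rather than using the cruder $|1-A|^2\leq 2+2|A|^2$, otherwise the intermediate $\frac{1}{n^2|x|^3}$ contribution gets absorbed and lost. The constants can of course be made explicit, but only the existence of a universal $C$ is claimed in the lemma.
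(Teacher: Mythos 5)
Your proposal is correct and follows essentially the same route as the paper: the uniform bounds come from evaluating (or bounding via the triangle inequality) the Fourier expansions at their peak, and the decay bounds come from the closed-form expressions combined with the elementary estimate $|\sin(x/2)|\geq |x|/\pi$ on $[-\pi,\pi]$. The paper writes the $L_n$ decay bound as $(1+\pi/(n|x|))^2$, which is the same expansion you record as $1+2|A|+|A|^2$, so your caution about not discarding the cross term matches the paper's computation exactly.
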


\begin{proof}
From the expression of the functions in terms complex exponentials, we have directly 
\[
K_n(x) \leq K_n(0)=n, \quad L_n(x) \leq L_n(0)=\frac{6n(n+1)}{4(2n+1)} \leq n, 
\]
and since $u(1-u) \leq 1/4$ for $u \in [0,1]$, in the same way we get
\[
 \sup_{x\in [-\pi,\pi]} |K_n'(x)| \leq 2n \sum_{r=1}^n \frac{r}{n} \left(  1-\frac{r}{n} \right) \leq n^2/2 \leq n^2.
\]
By concavity of the function $x \mapsto \sin(x)$ on $[0, \pi/2]$, we have $|\sin(x/2)| \geq \frac{|x|}{\pi}$ for $x \in [-\pi, \pi]$, and injecting this estimates in the denominators of $K_n$, $L_n$ and $K_n'$, we get uniformly in $x \in [-\pi, \pi]$
\[
K_n(x)\leq \frac{\pi^2}{n x^2}, \quad L_n(x) \leq \frac{\alpha_n}{n} \frac{\pi^2(n+1)^2}{4 x^2} \left( 1 + \frac{\pi}{n |x|}\right)^2, \quad 
\frac{|K_n'(x)|}{n} \leq \frac{\pi^2}{n x^2} + \frac{\pi^3}{n^2 |x|^3}.
\]
\end{proof}

Suppose that the spectral measure $\mu_{\rho}$ admits the decomposition $\mu_{\rho} = \mu_{\rho}^s + \psi_{\rho}(x)dx$, where $\mu_{\rho}^s$ is its singular part and $\psi_{\rho}$ is the density component, with the convention that $\psi_{\rho}=0$ if 
$\mu_{\rho}$ is purely singular. From the above estimates on the trigonometric kernels, one can then deduce the following Fej\'er--Lebesgue type asymptotics.

\begin{lma} \label{lm.fej.leb}
For Lebesgue almost every $ x \in [-\pi,\pi]$, 
\[
\lim_{n \to +\infty} K_n \ast \mu_{\rho}(x)=\psi_{\rho}(x), \quad \lim_{n \to +\infty} L_n \ast \mu_{\rho}(x)=\psi_{\rho}(x), \quad \lim_{n \to +\infty}  \frac{1}{n} K_n'(t) \ast \mu_{\rho}(x)=0.
\]
\end{lma}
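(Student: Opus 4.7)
The plan is to treat each of the three limits via the Lebesgue decomposition $\mu_\rho = \mu_\rho^s + \psi_\rho\,\lambda$ and the quantitative kernel estimates of Lemma \ref{lm.prop.L}. The three claims are Fejér--Lebesgue-type statements: for a summability kernel, a.e. convergence is obtained at Lebesgue points of $\psi_\rho$ (for the absolutely continuous part) and from the Lebesgue differentiation theorem applied to the singular measure (the Radon--Nikodym derivative of $\mu_\rho^s$ with respect to $\lambda$ vanishes at $\lambda$-a.e. $x$).

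For $K_n$, this is literally the classical Fejér--Lebesgue theorem. The key point is that $K_n$ is nonnegative, of integral $1$, and by Lemma \ref{lm.prop.L} is dominated by the radial decreasing majorant $\min(n, C/(ny^2))$, which can be written as $n\,\Phi(ny)$ with $\Phi$ a fixed integrable radial decreasing function. This standard framework yields $K_n \ast \psi_\rho(x) \to \psi_\rho(x)$ at every Lebesgue point of $\psi_\rho$, and $K_n \ast \mu_\rho^s(x) \to 0$ at $\lambda$-a.e. $x$. For $L_n$, I would repeat this argument verbatim: $L_n$ is nonnegative (Lemma \ref{lm.prop.L}), of integral $1$, and the bound $L_n(y) \leq C(\tfrac{1}{ny^2} + \tfrac{1}{n^2|y|^3} + \tfrac{1}{n^3y^4})$ decomposes into three pieces, each of the form $n\,\Phi_j(ny)$ with $\Phi_j \in L^1$. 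The approximate identity property and the same reasoning give the a.e. convergence to $\psi_\rho(x)$.

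The case of $K_n'/n$ is the genuine obstacle, since this kernel changes sign and we must exploit cancellation rather than positivity. Two structural facts help: first, by $2\pi$-periodicity of $K_n$ we have $\int K_n'(y)\,dy = 0$ and $K_n'$ is odd; second, integrating the bound $|K_n'(y)|/n \leq C\min(n, 1/(ny^2) + 1/(n^2|y|^3))$ gives a uniform bound $\int |K_n'(y)|/n\,dy = O(1)$. For the singular part, I would control
\[
\Bigl|\tfrac{1}{n} K_n' \ast \mu_\rho^s(x)\Bigr| \leq \tfrac{1}{n} |K_n'| \ast |\mu_\rho^s|(x),
\]
and, since $|K_n'|/n$ is dominated by a sum of rescaled integrable radial decreasing functions, apply the Lebesgue differentiation theorem for the singular measure to conclude the right-hand side tends to $0$ at $\lambda$-a.e. $x$. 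For the absolutely continuous part, I would use the odd symmetry of $K_n'$ to write
\[
\tfrac{1}{n} K_n' \ast \psi_\rho(x) = \tfrac{1}{4\pi n}\int_{-\pi}^{\pi} K_n'(y)\bigl(\psi_\rho(x-y) - \psi_\rho(x+y)\bigr)\,dy,
\]
and split this integral at the scale $|y| = 1/n$. Inside $|y|<1/n$ the bound $|K_n'|/n \le n$ combined with the Lebesgue point property $\int_{|y|<r}|\psi_\rho(x+y)-\psi_\rho(x)|\,dy = o(r)$ yields an $o(1)$ contribution; outside, I would perform an integration by parts on $\int_{1/n}^{\pi} F(y)/y^k\,dy$ with $F(y) := \int_0^{y}|\psi_\rho(x-s)-\psi_\rho(x+s)|\,ds$, exploiting $F(y) = o(y)$ at a Lebesgue point to get the remaining piece to $0$.

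The main obstacle is precisely Part~3: recovering $0$ in the limit requires exploiting the oddness of $K_n'$ together with a sharpened Lebesgue-point estimate, rather than the plain domination by an $L^1$-bounded majorant used in Parts~1 and 2. The bounds of Lemma \ref{lm.prop.L}, combined with the fact that $|K_n'|/n$ is uniformly bounded in $L^1$, are exactly the ingredients needed to carry this out.
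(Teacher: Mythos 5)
Your proposal is correct and follows the same overall Fej\'er--Lebesgue strategy as the paper (cite the classical theorem for $K_n$; use the scaled decay bounds of Lemma~\ref{lm.prop.L} for $L_n$ and $K_n'/n$; exploit the oddness of $K_n'$ to produce cancellation), but it packages the a.e.~differentiability input differently. You split $\mu_\rho = \mu_\rho^s + \psi_\rho\,\lambda$, treat the absolutely continuous part via Lebesgue points of $\psi_\rho$ and the singular part via the Lebesgue differentiation theorem for singular measures, and for each kernel you view the bounds of Lemma~\ref{lm.prop.L} as an approximate identity $n\Phi(n\cdot)$ with $\Phi$ an integrable radial decreasing majorant. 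The paper instead introduces a single bounded-variation function $\varphi_x(t) = \mu_\rho([0,x+t]) - \mu_\rho([0,x-t]) - 2t\psi_\rho(x)$ and cites Zygmund's Theorem~8.4 (which asserts its total variation $\Phi_x(\varepsilon)$ is $o(\varepsilon)$ for a.e.~$x$), so that the a.c.\ and singular parts and the two sides of the increment are handled in a single stroke; it then integrates by parts against $d\varphi_x$ after splitting at $|t| = 1/n$. The two formulations are essentially equivalent: Zygmund's $\Phi_x(\varepsilon) = o(\varepsilon)$ is precisely what your combination of Lebesgue points plus singular differentiation encodes. Your version is slightly more hands-on (it makes explicit the symmetrization $\psi_\rho(x-y) - \psi_\rho(x+y)$ and the auxiliary function $F(y) = \int_0^y |\psi_\rho(x-s)-\psi_\rho(x+s)|\,ds$), whereas the paper's single $\varphi_x$ absorbs the singular part without a separate lemma. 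Incidentally, your explicit antisymmetrization for the $K_n'$ case sidesteps a notational wrinkle in the paper's stated representation for $K_n'/n$ against $d\varphi_x$, which as written cancels to zero since both $K_n'$ and the measure $d\varphi_x$ are symmetric in opposite senses; the intended object there is the antisymmetric analogue of $\varphi_x$, and your formula makes that correction transparent.
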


\begin{proof}The first estimates is precisely the celebrated Fej\'er--Lebesgue Lemma, see e.g. Theorem 8.1,  page 105 of \cite{Zyg03}. For the sake of self-containedness, let us detail the proof for the two other kernels $L_n$ and $K_n'/n$. 
If we set 
\[
\varphi_x(t):= \mu_{\rho}([0,x+t])-\mu_{\rho}([0,x-t])-2t \psi_{\rho}(x),
\]
then $t \mapsto \phi_x(t)$ is of bounded variation and for $\varepsilon>0$, we denote by $\Phi_x(\varepsilon)$ its total variation on $[-\varepsilon,\varepsilon]$, namely
\[
\Phi_x(\varepsilon):=\int_{-\varepsilon}^{\varepsilon}|d\varphi_x(u)|.
\] 
By Theorem 8.4, p 106 of \cite{Zyg03}, Lebesgue almost all point $x$ in $[-\pi,\pi]$ belong to the set 
\[
E:=\left \lbrace x \in [-\pi,\pi], \; \Phi_x(\varepsilon)=o(\varepsilon)\;  \text{as} \;  \varepsilon \; \text{ goes to zero}\right \rbrace. 
\]
From now on, we fix $x \in E$. Since $\frac{1}{2\pi}\int_{-\pi}^{\pi} L_n(t)dt=1$, we have the representation:
\begin{equation}\label{eq.zyg0}
L_n \ast \mu_{\rho}(x)-\psi_{\rho}(x) = \frac{1}{2} \times \left( \frac{1}{2\pi} \int_{-\pi}^{\pi} L_n(t) d\varphi_x(t) \right). 
\end{equation}
By the first point of Lemma \ref{lm.prop.L}, we have $L_n(t) \leq n$ uniformly so that as $n$ goes to infinity
\begin{equation}\label{eq.zyg00}
\left| \int_{-\frac{1}{n}}^{\frac{1}{n}} L_n(t) d\phi_x(t)\right| \leq n \times \int_{-\frac{1}{n}}^{\frac{1}{n}}  \left| d\varphi_x(t)\right| =n \,  \Phi_x\left( \frac{1}{n} \right) =o(1).
\end{equation}
Moreover, by the second point of Lemma \ref{lm.prop.L}, we also have
\begin{equation}\label{eq.zyg}
\left| \int_{\frac{1}{n} \leq |t| \leq \pi}  L_n(t) d\varphi_x(t) \right|\leq C\left(\int_{\frac{1}{n} \leq |t| \leq \pi} \frac{|d\varphi_x(t)|}{n t^2} +\int_{\frac{1}{n} \leq |t| \leq \pi} \frac{|d\varphi_x(t)|}{n^2 t^3} +\int_{\frac{1}{n} \leq |t| \leq \pi} \frac{|d\varphi_x(t)|}{n^3t^4}\right).
\end{equation}
Then, integrating by parts the first term on the right hand side of \eqref{eq.zyg}, we obtain
\[
\begin{array}{ll}
\displaystyle{\int_{\frac{1}{n} \leq |t| \leq \pi} \frac{|d\varphi_x(t)|}{n t^2} }& = \displaystyle{\left[\Phi_x(t) \times \frac{1}{nt^2}\right]_{1/n}^{\pi}+2\int_{\frac{1}{n} \leq |t| \leq \pi} \frac{\Phi_x(t)dt}{n t^3} }.
\end{array}
\]
On the one hand, since $x \in E$, as $n$ goes to infinity, we have
\[
\left[\Phi_x(t) \times \frac{1}{nt^2}\right]_{1/n}^{\pi} = O \left(\frac{1}{n} \right) + n \,  \Phi_x\left( \frac{1}{n} \right)=o(1), 
\]
and on the other hand, for all $\delta>0$ and $n$ large enough,  we have
\[
\int_{\frac{1}{n} \leq |t| \leq \pi} \frac{\Phi_x(t)dt}{n t^3}  = \underbrace{\int_{\frac{1}{n} \leq |t| \leq \delta} \frac{\Phi_x(t)}{t} \times \frac{dt}{n t^2} }_{o(\delta)\times O(1)} + \underbrace{\int_{\delta \leq |t| \leq \pi} \frac{\Phi_x(t)}{t} \times \frac{dt}{n t^2} }_{=O\left( \frac{1}{n \delta^2}\right)},
\]
so that letting first $n$ go to infinity and then $\delta$ to zero, we get
\[
\int_{\frac{1}{n} \leq |t| \leq \pi} \frac{|d\varphi_x(t)|}{n t^2} = o(1).
\]
Proceeding in the exact same way for the two other terms on the right hand side of \eqref{eq.zyg}, we obtain that if $x \in E$, as $n$ goes to infinity
\[
\int_{\frac{1}{n} \leq |t| \leq \pi} \frac{|d\varphi_x(t)|}{n^2 t^3} = o(1), \qquad \int_{\frac{1}{n} \leq |t| \leq \pi} \frac{|d\varphi_x(t)|}{n^3 t^4} = o(1).
\]
As a result, from Equations \eqref{eq.zyg0}, \eqref{eq.zyg00} and \eqref{eq.zyg}, we get indeed that $L_n \ast \mu_{\rho}(x) -\psi_{\rho}(x)$ goes to zero as $n$ goes to infinity. The proof for the kernel $K_n'$ is very similar. Since $K_n'$ is odd, we have a similar representation 
\[ 
\frac{1}{n}K_n' \ast \mu_{\rho}(x)=\frac{1}{2} \times \left( \frac{1}{2\pi} \int_{-\pi}^{\pi} \frac{1}{n}K_n'(t) d\varphi_x(t) \right). 
\]
and one can conclude as above using the uniform estimates for $K_n'$ given by Lemma \ref{lm.prop.L}.
\end{proof}

In the case where the measure $\mu_{\rho}$ has a singular component, it is not possible to give general quantitative estimates for the convergences in Lemma \ref{lm.fej.leb}. On the opposite case, if  $\mu_{\rho}$ is absolutely continuous $\mu_{\rho}(dx)=\psi_{\rho}(x)dx$ and $\psi_{\rho}$ is regular enough, then there exists simple quantitative bounds. For $\alpha>0$, we denote by $\mathcal C^{1,\alpha}$ the set of $\mathcal C^1$ functions $\psi$ such that the derivative $\psi'$ is $\alpha-$H\"older with H\"older norm $[\psi']_{\alpha}$
\[
[\psi']_{\alpha} := \sup_{|x-y|>0} \frac{|\psi'(x)-\psi'(y)|}{|x-y|^{\alpha}}<+\infty.
\]
\begin{lma}\label{lem.speedfejer}
Suppose that $\mu_{\rho}(dx)=\psi_{\rho}(x)dx$ with $\psi_{\rho}$ of class $\mathcal C^{1,\alpha}$ with $\alpha >0$, then uniformly in $x$
\[
K_n \ast \mu_{\rho}(x)-\psi_{\rho}(x)=O(1/n), \quad L_n \ast \mu_{\rho}(x)-\psi_{\rho}(x)=O(1/n).
\]
\end{lma}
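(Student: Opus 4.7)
The plan is to exploit the evenness and unit mass of the kernels $K_n$ and $L_n$ to rewrite each convolution error as a symmetrized integral involving a second-order difference of $\psi_\rho$, and then to control this difference using the $\mathcal C^{1,\alpha}$ regularity. Since $\frac{1}{2\pi}\int_{-\pi}^{\pi} K_n = 1$ and $K_n$ is even, a change of variable gives
$$K_n \ast \mu_\rho(x) - \psi_\rho(x) = \frac{1}{4\pi} \int_{-\pi}^{\pi} K_n(u)\bigl(\psi_\rho(x+u) + \psi_\rho(x-u) - 2\psi_\rho(x)\bigr)\,du,$$
and the identical identity holds with $L_n$ in place of $K_n$. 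Writing
$$\psi_\rho(x+u) + \psi_\rho(x-u) - 2\psi_\rho(x) = \int_0^u \bigl(\psi'_\rho(x+s) - \psi'_\rho(x-s)\bigr)\,ds$$
and using $|\psi'_\rho(x+s) - \psi'_\rho(x-s)| \leq [\psi'_\rho]_\alpha (2s)^\alpha$ yields the uniform-in-$x$ estimate $|\psi_\rho(x+u) + \psi_\rho(x-u) - 2\psi_\rho(x)| \leq C\,|u|^{1+\alpha}$.

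Next, I split the integral at the natural scale $|u| = 1/n$. On $\{|u| \leq 1/n\}$, the bound $K_n \leq n$ from the first point of Lemma \ref{lm.prop.L} produces a contribution of order $n \int_0^{1/n} u^{1+\alpha}\,du = O(n^{-1-\alpha})$. On $\{|u| > 1/n\}$, the bound $K_n(u) \leq C/(nu^2)$ gives $\frac{1}{n}\int_{1/n}^{\pi} u^{\alpha-1}\,du = O(1/n)$, since $\alpha > 0$ makes $u^{\alpha-1}$ integrable near $0$. Summing the two contributions gives $K_n \ast \mu_\rho(x) - \psi_\rho(x) = O(1/n)$ uniformly in $x$.

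The same strategy treats $L_n$: the inner contribution is again $O(n^{-1-\alpha})$, while the outer part invokes $L_n(u) \leq C\bigl(1/(nu^2) + 1/(n^2|u|^3) + 1/(n^3 u^4)\bigr)$ and splits into three pieces. The first is $O(1/n)$ as for $K_n$; for the other two, the elementary identities $\int_{1/n}^{\pi} u^{\alpha-2}\,du = O(n^{1-\alpha})$ and $\int_{1/n}^{\pi} u^{\alpha-3}\,du = O(n^{2-\alpha})$ (one may of course assume $\alpha \in (0,1]$) give contributions of order $O(n^{-1-\alpha})$, negligible in front of $O(1/n)$. The argument is essentially a routine quantitative Fej\'er--Lebesgue estimate; the only point worth a verification is precisely this compatibility of the additional polar singularities in the $L_n$ bound, which were harmless in the qualitative setting of Lemma \ref{lm.fej.leb}, with the mere $|u|^{1+\alpha}$ cancellation provided by $\mathcal C^{1,\alpha}$ regularity for arbitrarily small $\alpha > 0$.
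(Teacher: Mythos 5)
Your proof is correct and follows essentially the same route as the paper's: the key step in both is to exploit the evenness of the kernels to kill the linear term (you do this by explicit symmetrization into a second-order difference, the paper by inserting the mean-value Taylor correction $y\psi_\rho'(x)$ and noting it integrates to zero), reduce to the uniform bound $|\psi_\rho(x+u)+\psi_\rho(x-u)-2\psi_\rho(x)|\le C|u|^{1+\alpha}$, and then integrate against the kernel tail bounds of Lemma~\ref{lm.prop.L}. The only cosmetic difference is that the paper bounds $\int K_n(y)|y|^{1+\alpha}\,dy$ directly via $K_n(y)\le C/(ny^2)$ without splitting at $|y|=1/n$ (the integrand $|y|^{\alpha-1}$ being already integrable at the origin), whereas you split; for $L_n$ both you and the paper split at $1/n$ and handle the three decay terms the same way.
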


\begin{proof}
Let us note that for any $x,y \in [-\pi, \pi] $, by the mean value Theorem, there exists $c$ between $x$ and $x-y$ such that  $\psi_{\rho}(x-y)-\psi_{\rho}(x)=-y \psi_{\rho}'(c)$. Since $\psi_{\rho}'$ is a  $\alpha-$H\"older function, we have moreover $|\psi_{\rho}'(c)-\psi_{\rho}'(x)| \leq [\psi']_{\alpha}  |c-x|^{\alpha} \leq  [\psi']_{\alpha} |y|^{\alpha}$, so that 
\[
| \psi_{\rho}(x-y)-\psi_{\rho}(x)+y \psi_{\rho}'(x) | \leq  [\psi']_{\alpha} |y|^{1+\alpha}.
\]
Therefore, integrating against the even kernel $K_n$, we thus get that 
\[
|K_n \ast \mu_{\rho}(x)-\psi_{\rho}(x)| \leq \frac{[\psi']_{\alpha}}{2\pi} \int_{-\pi}^{\pi} K_n(y) |y|^{1+\alpha}dy.
\]
Now, using the uniform bounds of the second point in Lemma \ref{lm.prop.L}, we get 
\[
\int_{-\pi}^{\pi} K_n(y) |y|^{1+\alpha}dy \leq \frac{C}{n} \int_{-\pi}^{\pi}|y|^{\alpha-1}dy = O \left( \frac{1}{n} \right).
\]
In the same way, since $L_n$ is also even, we have 
\[
|L_n \ast \mu_{\rho}(x)-\psi_{\rho}(x)| \leq \frac{[\psi']_{\alpha}}{2\pi} \int_{-\pi}^{\pi} L_n(y) |y|^{1+\alpha}dy,
\]
and by Lemma \ref{lm.prop.L} again, we get 
\[
\int_{-\pi}^{\pi} L_n(y) |y|^{1+\alpha}dy = \int_{0\leq |y| \leq 1/n}  L_n(y) |y|^{1+\alpha}dy+\int_{ |y| > 1/n}  L_n(y) |y|^{1+\alpha}dy
\]
with 
\[
\begin{array}{ll} 
\displaystyle{\int_{0\leq |y| \leq 1/n}  L_n(y) |y|^{1+\alpha}dy} &\displaystyle{ \leq n \int_{0\leq |y| \leq 1/n}   |y|^{1+\alpha}dy = O\left( \frac{1}{n^{\alpha+1}} \right),}\\
\\
\displaystyle{\int_{|y| > 1/n}  L_n(y) |y|^{1+\alpha}dy} &\displaystyle{ \leq \int_{ |y| > 1/n}   |y|^{1+\alpha}\left( \frac{1}{n|y|^2}+ \frac{1}{n^2|y|^3} +\frac{1}{n^3|y|^4}\right)
dy = O\left( \frac{1}{n} \right).}

\end{array}
\]

\end{proof}

\begin{lma}\label{lem.nondeg}
Suppose that the spectral measure $\mu_{\rho} = \mu_\rho^s + \psi_{\rho}(x)dx$ is not purely singular, i.e. $\psi_{\rho}$ is positive on a set of positive Lebesgue measure, then there exists a positive constant $C$ such that, uniformly in $x \in [-\pi, \pi]$, for $n$ large enough
\[
K_n \ast \mu_{\rho}(x) \geq K_n \ast \psi_{\rho}(x) \geq C/n.
\]
\end{lma}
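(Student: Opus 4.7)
The first inequality $K_n \ast \mu_\rho(x) \geq K_n \ast \psi_\rho(x)$ is immediate because $K_n \geq 0$ by Lemma \ref{lm.prop.L} and $\mu_\rho^s$ is a nonnegative measure, so my attention goes entirely to the lower bound $K_n \ast \psi_\rho(x) \geq C/n$.

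The plan is to reduce the problem to estimating the convolution of $K_n$ against a suitable indicator function. Since $\psi_\rho \geq 0$ and $\int \psi_\rho > 0$, there exist $\varepsilon > 0$ and a Borel set $B \subset [-\pi,\pi]$ with $\lambda(B) > 0$ such that $\psi_\rho(y) \geq \varepsilon$ for every $y \in B$. Then
\[
K_n \ast \psi_\rho(x) \;\geq\; \frac{\varepsilon}{2\pi}\int_B K_n(x-y)\,dy,
\]
so it suffices to prove that $\inf_{x\in[-\pi,\pi]} \int_B K_n(x-y)\,dy \geq c/n$ for $n$ large enough and some $c>0$.

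The key elementary estimate is the pointwise lower bound
\[
K_n(u) \;=\; \frac{1}{n}\left(\frac{\sin(nu/2)}{\sin(u/2)}\right)^2 \;\geq\; \frac{\sin^2(nu/2)}{n},
\]
since $|\sin(u/2)| \leq 1$. Linearizing via $2\sin^2(\theta) = 1 - \cos(2\theta)$ gives
\[
\int_B K_n(x-y)\,dy \;\geq\; \frac{\lambda(B)}{2n} \;-\; \frac{1}{2n}\int_B \cos\!\bigl(n(x-y)\bigr)\,dy.
\]
Expanding the cosine, the correction term equals $\frac{1}{2n}\bigl(\cos(nx)\hat c_n + \sin(nx)\hat s_n\bigr)$, where $\hat c_n := \int_B \cos(ny)\,dy$ and $\hat s_n := \int_B \sin(ny)\,dy$ are Fourier coefficients of $\mathbf 1_B \in L^1$, so both tend to $0$ by the Riemann--Lebesgue lemma. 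Consequently, uniformly in $x \in [-\pi,\pi]$,
\[
\int_B K_n(x-y)\,dy \;\geq\; \frac{\lambda(B)}{2n} - \frac{|\hat c_n| + |\hat s_n|}{2n} \;\geq\; \frac{\lambda(B)}{4n}
\]
for all $n$ large enough. Combining this with the reduction above yields $K_n \ast \psi_\rho(x) \geq \varepsilon\lambda(B)/(8\pi n)$, which is the announced bound with $C = \varepsilon\lambda(B)/(8\pi)$.

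The main subtlety here is the uniformity in $x$: at first sight one might worry that $K_n$ has zeros on a $1/n$-grid inside $[-\pi,\pi]$, so that $K_n(x-y)$ could vanish for $y$ in a substantial portion of $B$. The trick that circumvents this is precisely the replacement of $K_n(u)$ by the cleaner minorant $\sin^2(nu/2)/n$, whose average on any fixed set of positive measure is asymptotically $\lambda(B)/(2n)$ regardless of the phase $x$, thanks to Riemann--Lebesgue. No regularity of $\psi_\rho$ (nor any control on $\mu_\rho^s$) is used beyond the fact that $\psi_\rho$ is not almost everywhere zero.
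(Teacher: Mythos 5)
Your proof is correct and takes essentially the same route as the paper: both hinge on the minorant $K_n(u) \geq \frac{1-\cos(nu)}{2n}$ (your $\sin^2(nu/2)/n$ is the same thing after linearization) and the Riemann--Lebesgue lemma to kill the oscillatory correction uniformly in $x$. The only difference is a cosmetic one: you first reduce to a set $B$ where $\psi_\rho \geq \varepsilon$, whereas the paper applies Riemann--Lebesgue directly to $\psi_\rho \in L^1$, which is marginally shorter and yields the cleaner constant $C = \frac{1}{4\pi}\int\psi_\rho$; the extra reduction costs nothing but also buys nothing.
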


\begin{proof}
Since the Fej\'er kernel $K_n$ is non-negative, we have 
\[
K_n \ast \mu_{\rho}(x)  \geq K_n \ast \psi_{\rho}(x) = \frac{1}{2\pi} \int_{-\pi}^{\pi} K_n(t) \psi_{\rho}(x-t) dt.
\]
Now, we can rewrite $K_n$ as
\[
K_n(t)= \frac{1}{2n}\times \frac{1-\cos(nt)}{\sin^2(t/2)}
\]
so that, since $\sin(t/2) \leq 1$, we get
\[
K_n \ast \mu_{\rho}(x) \geq  \frac{1}{2n} \frac{1}{2\pi} \int_{-\pi}^{\pi}  \psi_{\rho}(t) dt-\frac{1}{2n} \frac{1}{2\pi}\int_{-\pi}^{\pi} \cos(nt) \psi_{\rho}(x-t)  dt.
\] 
Then, since 
\[
\int_{-\pi}^{\pi} \cos(nt) \psi_{\rho}(x-t)  dt = \cos(nx)\int_{-\pi}^{\pi} \cos(nt) \psi_{\rho}(t)  dt  + \sin(nx)\int_{-\pi}^{\pi} \sin(nt) \psi_{\rho}(t)  dt ,
\]
by Riemann--Lebesgue Lemma, we deduce that uniformly in $x$, we have as $n$ goes to infinity
\[
\left| \int_{-\pi}^{\pi} \cos(nt) \psi_{\rho}(x-t)  dt \right| \leq \left| \int_{-\pi}^{\pi} \cos(nt) \psi_{\rho}(t)  dt  \right|+ \left| \int_{-\pi}^{\pi} \sin(nt) \psi_{\rho}(t)  dt \right|= o(1).
\]
Setting $C:=\frac{1}{4\pi} \int_{-\pi}^{\pi}  \psi_{\rho}(t) dt>0$, we thus obtain that
\[
K_n \ast \psi_{\rho}(x) \geq \frac{C}{n} + o\left(\frac{1}{n}\right), 
\]
hence the result.
\end{proof}

In fact we have the more general lower bound.

\begin{lma}
Uniformly in $x \in [-\pi, \pi]$, and for any integer $n \geq 1$
\[
K_n \ast \mu_{\rho}(x) \geq \frac{1}{4\pi n} \left(  1- \rho(n)\cos(nx) \right).
\]
\end{lma}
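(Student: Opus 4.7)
My plan is to reduce the inequality to a pointwise lower bound on the Fejér kernel itself, integrated against $\mu_\rho$. The starting point is the integral representation $2\pi K_n \ast \mu_\rho(x) = \int_{-\pi}^{\pi} K_n(x-y)\, \mu_\rho(dy)$, so after multiplying the claim through by $4\pi n$, it suffices to show
\[
\int_{-\pi}^{\pi} K_n(x-y)\, \mu_\rho(dy) \;\geq\; \frac{1}{n}\bigl(1 - \rho(n)\cos(nx)\bigr).
\]

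First I would rewrite the right-hand side as an integral against $\mu_\rho$. Using that $\mu_\rho$ is symmetric and $\rho(n) = \int \cos(ny)\,\mu_\rho(dy)$, a direct expansion of $\cos(n(x-y))$ gives
\[
1 - \rho(n)\cos(nx) \;=\; \int_{-\pi}^{\pi}\bigl(1-\cos(n(x-y))\bigr)\mu_\rho(dy) \;=\; 2\int_{-\pi}^{\pi}\sin^{2}\!\Bigl(\tfrac{n(x-y)}{2}\Bigr)\mu_\rho(dy).
\]
So the claim is equivalent to
\[
\int K_n(x-y)\,\mu_\rho(dy) \;\geq\; \frac{2}{n}\int \sin^{2}\!\Bigl(\tfrac{n(x-y)}{2}\Bigr)\mu_\rho(dy),
\]
and since $\mu_\rho$ is a positive measure, it is enough to prove the pointwise inequality $K_n(\theta) \geq \tfrac{2}{n}\sin^{2}(n\theta/2)$ for every $\theta$.

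The pointwise inequality is where everything becomes elementary: from the closed form
\[
K_n(\theta) \;=\; \frac{1}{n}\,\frac{\sin^{2}(n\theta/2)}{\sin^{2}(\theta/2)},
\]
the desired bound reduces to $\sin^{2}(\theta/2) \leq 1$, which is trivially true (in fact this yields a factor of $1/n$, which is actually a stronger constant than the $2/n$ demanded above — so the inequality passes through comfortably and one still obtains at least $\tfrac{1}{4\pi n}(1-\rho(n)\cos(nx))$ after the normalization). Integrating this pointwise bound against $\mu_\rho(dy)$ at $\theta = x-y$ and dividing by $2\pi$ gives the stated lower bound.

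No real obstacle is expected: the only delicate point is to keep track of the convolution normalization $f\ast g(x)=\frac{1}{2\pi}\int f(x-y)g(y)\,dy$ used throughout the paper when passing from the pointwise kernel inequality to the stated bound on $K_n\ast \mu_\rho$, and to remember that the symmetry of $\mu_\rho$ is what allows the identification of $1-\rho(n)\cos(nx)$ with an integral of $1-\cos(n(x-y))$ against $\mu_\rho$.
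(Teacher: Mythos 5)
Your argument is the same as the paper's: use $\sin^2(\theta/2)\leq 1$ to get the pointwise lower bound $K_n(\theta)\geq \tfrac{1}{2n}\bigl(1-\cos(n\theta)\bigr)=\tfrac{1}{n}\sin^2(n\theta/2)$, integrate against $\mu_\rho$, and use the symmetry of $\mu_\rho$ (so that $\int\sin(nt)\,d\mu_\rho=0$ and $\int\cos(nt)\,d\mu_\rho=\rho(n)$). However, your intermediate bookkeeping contains a compensating pair of factor-of-two slips. With the paper's convention $K_n\ast\mu_\rho(x)=\tfrac{1}{2\pi}\int K_n(x-y)\,\mu_\rho(dy)$, clearing denominators in the claim yields the target
\[
\int_{-\pi}^{\pi} K_n(x-y)\,\mu_\rho(dy)\;\geq\;\frac{1}{2n}\bigl(1-\rho(n)\cos(nx)\bigr)=\frac{1}{n}\int_{-\pi}^{\pi}\sin^2\!\Bigl(\tfrac{n(x-y)}{2}\Bigr)\mu_\rho(dy),
\]
not the $\tfrac{1}{n}(1-\rho(n)\cos(nx))$ you wrote, so the pointwise inequality actually required is $K_n(\theta)\geq \tfrac{1}{n}\sin^2(n\theta/2)$ rather than $\tfrac{2}{n}\sin^2(n\theta/2)$. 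This matters for your parenthetical remark: obtaining $K_n\geq\tfrac{1}{n}\sin^2$ is \emph{weaker}, not stronger, than obtaining $K_n\geq\tfrac{2}{n}\sin^2$ as a lower bound, so the logic you gave does not "pass through"; it is only because the true target was $\tfrac{1}{2n}$, not $\tfrac{1}{n}$, that the Fej\'er kernel estimate suffices. Once the normalization is redone carefully, the proof is exactly the paper's.
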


\begin{proof}
As above, we use the fact that uniformly in $t$
\[
K_n(t)\geq \frac{1}{2n}\times \left( 1-\cos(nt)\right)
\]
to deduce that 
\[
\begin{array}{ll}
\displaystyle{K_n \ast \mu_{\rho}(x)} & \geq \displaystyle{\frac{1}{2n} \frac{1}{2\pi} \times \int_{-\pi}^{\pi} \left( 1-\cos(n(t-x))\right) d\mu_{\rho}(t)}\\
\\
& \displaystyle{=\frac{1}{2n} \frac{1}{2\pi} \left( 1 - \cos(nx) \int_{-\pi}^{\pi} \cos(nt) d\mu_{\rho}(t) - \sin(nx) \int_{-\pi}^{\pi} \sin(nt) d\mu_{\rho}(t)\right) }\\
\\
& \displaystyle{=\frac{1}{2n} \frac{1}{2\pi} \left( 1 - \cos(nx) \rho(n) \right). }
\end{array}
\]
\end{proof}

We now recall the celebrated Kac--Rice formula which, in the present context, allows to express the expected number real zeros of $f_n$ as a simple functional of the covariance function of the Gaussian vector $(f_n(x),f_n'(x))$. 

\begin{prop}Suppose that the spectral measure $\mu_{\rho} = \mu_\rho^s + \psi_{\rho}(x)dx$ is not purely singular, i.e. $\psi_{\rho}$ is positive on a set of positive Lebesgue measure, then for any $[a,b] \subset [-\pi,\pi]$
\begin{equation}\label{Kac-Rice formula}
\begin{array}{ll}
\displaystyle{\frac{\Esp\left[\mathcal{N}\left(f_n,[a,b]\right)\right]}{n}} & =\displaystyle{\frac{1}{\pi}\int_a^b \sqrt{\frac{\Esp[{f'_n}^2(x)]}{n^2\Esp[f_n^2(x)]}-\left(\frac{\Esp[f_n(x)f'_n(x)]}{n\Esp[f_n^2(x)]}\right)^2}dx}\\
\\
& =\displaystyle{\frac{1}{\pi}\int_a^b \sqrt{\frac{1}{n^2 \alpha_n }\frac{L_n \ast \mu_{\rho}(x) }{ K_n \ast \mu_{\rho}(x)}-\left(\frac{K_n' \ast \mu_{\rho}(x)}{2n K_n \ast \mu_{\rho}(x)}\right)^2}dx}.
\end{array}
\end{equation}
\end{prop}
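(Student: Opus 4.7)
The plan is to obtain this identity as a direct application of the classical Kac--Rice formula for smooth centered Gaussian processes, once a small non-degeneracy verification has been carried out. Since $f_n$ is a finite linear combination of Gaussian variables against trigonometric monomials, it has $\mathcal{C}^{\infty}$ sample paths and the Gaussian vector $(f_n(x), f_n'(x))$ has moments of all orders, so the only non-trivial regularity hypothesis to check is that $\mathrm{Var}(f_n(x))>0$ at every $x$. This is precisely where Lemma \ref{lem.nondeg} is used: the assumption that $\mu_\rho$ is not purely singular combined with that lemma yields
\[
\Esp[f_n^2(x)] = 2\pi\, K_n \ast \mu_\rho(x) \geq \frac{2\pi C}{n} > 0
\]
uniformly in $x \in [-\pi,\pi]$, provided $n$ is large enough. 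The finitely many small-$n$ cases can be treated separately, since $f_n$ is a non-trivial Gaussian trigonometric polynomial and hence its variance cannot vanish identically.

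Granted this non-degeneracy, the standard Kac--Rice formula (see e.g.\ the Gaussian version in Aza\"is--Wschebor) applies to the centered Gaussian process $f_n$ on $[a,b] \subset [-\pi,\pi]$ and gives
\[
\Esp[\mathcal{N}(f_n,[a,b])] \;=\; \int_a^b \Esp\!\left[\, |f_n'(x)| \,\big|\, f_n(x)=0\,\right] p_{f_n(x)}(0)\, dx.
\]
Since $(f_n(x), f_n'(x))$ is bivariate centered Gaussian with covariance matrix
\[
\Sigma(x) = \begin{pmatrix} \Esp[f_n^2(x)] & \Esp[f_n(x) f_n'(x)] \\ \Esp[f_n(x)f_n'(x)] & \Esp[{f_n'}^2(x)] \end{pmatrix},
\]
the conditional law of $f_n'(x)$ given $f_n(x)=0$ is centered Gaussian with variance $\Esp[{f_n'}^2(x)] - \Esp[f_n(x)f_n'(x)]^2/\Esp[f_n^2(x)]$, so that the classical computation
\[
\Esp[|f_n'(x)|\,|\,f_n(x)=0]\cdot p_{f_n(x)}(0) = \frac{1}{\pi}\sqrt{\frac{\Esp[{f_n'}^2(x)]}{\Esp[f_n^2(x)]} - \left(\frac{\Esp[f_n(x)f_n'(x)]}{\Esp[f_n^2(x)]}\right)^{\!2}}
\]
yields the first equality of \eqref{Kac-Rice formula} after dividing by $n$ and pulling one factor of $n$ inside the square root.

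The second equality is then purely algebraic: substituting the kernel identifications \eqref{eq.corel1}, \eqref{eq.corel2} and \eqref{eq.corel3} for the three entries of $\Sigma(x)$, the factors $2\pi$ cancel in each ratio and a factor $1/(n^2\alpha_n)$ naturally appears in front of $L_n\ast\mu_\rho / K_n\ast\mu_\rho$ while a factor $1/(2n)$ appears in the cross term, giving exactly the displayed expression. The main (and essentially only) obstacle is the non-degeneracy check, which would fail without the assumption that the absolutely continuous part of $\mu_\rho$ is nonzero; this is why the statement excludes the purely singular case and why Lemma \ref{lem.nondeg} is invoked here as the key ingredient.
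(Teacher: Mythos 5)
Your proof follows the same route as the paper: invoke the classical Gaussian Kac--Rice formula from Aza\"is--Wschebor after verifying pathwise regularity and pointwise non-degeneracy of the variance via Lemma \ref{lem.nondeg}, then rewrite the covariances through Equations \eqref{eq.corel1}--\eqref{eq.corel3}; you merely spell out the standard conditional-expectation computation that the paper leaves implicit. One small caveat: Lemma \ref{lem.nondeg} only gives the lower bound for $n$ large, and your fallback argument for small $n$ (``the variance cannot vanish identically'') is not quite what is needed, since Kac--Rice requires $\Esp[f_n^2(x)]>0$ at every $x$; the correct repair is to observe that $\Esp[f_n^2(x)]=\int_{-\pi}^{\pi}K_n(x-t)\,d\mu_\rho(t)$ can vanish only if $\mu_\rho$ is carried by the finite zero set of $K_n(x-\cdot)$, which would force $\mu_\rho$ to be purely singular, or equivalently to use the paper's subsequent lower bound $K_n\ast\mu_\rho(x)\geq\frac{1}{4\pi n}(1-\rho(n)\cos(nx))$ together with $|\rho(n)|<1$ for $n\geq1$ when $\mu_\rho$ is not purely singular.
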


\begin{proof}
In the Gaussian context, Kac--Rice formula holds as soon as the function $x \mapsto f_n(x)$ is regular enough and the variables $(f_n(x))_{x \in [-\pi, \pi]}$ are non-degenerate, see e.g. \cite{AW09}. Here, we observe that $f_n(\cdot)$ has $\mathcal{C}^1$ paths and besides, in virtue of Lemma \ref{lem.nondeg}, for every $x\in[-\pi,\pi]$, we have the lower bound on the variance $\mathbb{E}[f_n^2(x)]=2\pi K_n\ast \mu_\rho(x)>C/n$, hence the validity of the formula. The equality between the two integrals is due to the expression of the covariance functions in terms of convolutions, see Equations \eqref{eq.corel1}, \eqref{eq.corel2}, \eqref{eq.corel3} above.
\end{proof}

\begin{cor}\label{cor.low}
Suppose that the spectral measure $\mu_{\rho}$  is such that the spectral density $\psi_{\rho}$ is positive almost everywhere on an interval $[a,b]$, then 
\[
\liminf_{n \to +\infty} \frac{\Esp\left[\mathcal{N}\left(f_n,[a,b]\right)\right]}{n}\geq \frac{b-a}{\pi\sqrt{3}}.
\]
\end{cor}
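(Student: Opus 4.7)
The plan is to start from the Kac--Rice representation \eqref{Kac-Rice formula} and apply Fatou's lemma after identifying the pointwise almost everywhere limit of the integrand through Lemma \ref{lm.fej.leb}. Writing
\[
I_n(x):= \sqrt{\frac{1}{n^2 \alpha_n}\frac{L_n \ast \mu_\rho(x)}{K_n \ast \mu_\rho(x)} - \left(\frac{K_n' \ast \mu_\rho(x)}{2n\, K_n \ast \mu_\rho(x)}\right)^2},
\]
the quantity under the square root is non-negative for every $x$ and every $n$, since by Cauchy--Schwarz it coincides, up to the positive factor $1/(n^2\Esp[f_n^2(x)]^2)$, with the determinant of the covariance matrix of the Gaussian vector $(f_n(x),f_n'(x))$. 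Hence $I_n(x)$ is a well-defined non-negative function of $x$, and Fatou's lemma yields
\[
\liminf_{n \to +\infty} \frac{\Esp[\mathcal{N}(f_n,[a,b])]}{n} = \liminf_{n \to +\infty} \frac{1}{\pi} \int_a^b I_n(x)\, dx \geq \frac{1}{\pi} \int_a^b \liminf_{n \to +\infty} I_n(x)\, dx.
\]

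It remains to identify the pointwise liminf of $I_n$ on a set of full measure inside $[a,b]$. By Lemma \ref{lm.fej.leb}, for Lebesgue almost every $x\in[-\pi,\pi]$ we have $K_n \ast \mu_\rho(x) \to \psi_\rho(x)$, $L_n \ast \mu_\rho(x) \to \psi_\rho(x)$, and $\frac{1}{n} K_n' \ast \mu_\rho(x) \to 0$. By assumption, for almost every $x \in [a,b]$ we also have $\psi_\rho(x)>0$, so that the ratios $L_n \ast \mu_\rho(x)/K_n \ast \mu_\rho(x)$ converge to $1$ and $K_n' \ast \mu_\rho(x)/(n\, K_n \ast \mu_\rho(x))$ converges to $0$. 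Combined with the elementary asymptotics $n^2 \alpha_n \to 3$, one gets
\[
\lim_{n\to +\infty} I_n(x) = \sqrt{\frac{1}{3}} = \frac{1}{\sqrt{3}} \qquad \text{for Lebesgue a.e. } x \in [a,b].
\]

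Injecting this pointwise limit back into the Fatou bound immediately gives
\[
\liminf_{n \to +\infty} \frac{\Esp[\mathcal{N}(f_n,[a,b])]}{n} \geq \frac{1}{\pi}\int_a^b \frac{dx}{\sqrt{3}} = \frac{b-a}{\pi \sqrt{3}},
\]
which is the announced lower bound. The only delicate point is ensuring that the set on which the convergences of Lemma \ref{lm.fej.leb} fail has zero Lebesgue measure inside $[a,b]$, which is the content of that lemma, and that the denominator $K_n \ast \mu_\rho(x)$ in the ratios does not create spurious degeneracies in the limit; this is taken care of by the hypothesis $\psi_\rho>0$ almost everywhere on $[a,b]$, ensuring that the limiting denominator is positive on a set of full measure. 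No uniform control or equi-integrability of $I_n$ is needed, which is exactly why Fatou suffices here.
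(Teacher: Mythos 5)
Your proof is correct and follows essentially the same route as the paper: pass to the Kac--Rice representation, invoke Lemma \ref{lm.fej.leb} for the pointwise a.e. limits of the convolutions, use $\psi_\rho>0$ a.e. on $[a,b]$ to identify the a.e. limit of the integrand as $1/\sqrt{3}$, and conclude via Fatou's lemma. The paper's proof is just a terser version of yours; the only minor caveat is a notational clash, since the paper later writes the Kac--Rice integrand as $\frac{1}{n}\sqrt{I_n(x)}$ rather than $I_n(x)$, but this does not affect the argument.
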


\begin{proof}
The result is an immediate consequence of the representation formula \eqref{Kac-Rice formula}, associated with the Fej\'er--Lebesgue type estimates of Lemma \ref{lm.fej.leb}. Indeed, under the assumption that $\psi_{\rho}(x)>0$ for almost  all $x \in [a,b]$, as $n$ goes to infinity we have
\[
\frac{1}{n^2 \alpha_n } \to \frac{1}{3}, \quad \frac{L_n \ast \mu_{\rho}(x) }{ K_n \ast \mu_{\rho}(x)} \to \frac{\psi_{\rho}(x)}{\psi_{\rho}(x)}=1, \quad \frac{K_n' \ast \mu_{\rho}(x)}{n K_n \ast \mu_{\rho}(x)}\to 0,
\]
and one concludes by using Fatou Lemma in Equation \eqref{Kac-Rice formula}.
\end{proof}

\begin{rmk}As the lower bound mentioned in Remark \ref{rmk-pirhadi} above, the last Corollary \ref{cor.low} is a local estimate which goes in the direction of the conjecture raised by Pirhadi in \cite{Pir19a}, on the minimal value of the expected number of zeros of Gaussian trigonometric polynomials with dependent coefficients. Note that the result is independent of the singular component of the spectral measure.
\end{rmk}

\newpage
\subsection{Towards a non-universal asymptotics}
We can now give the detailed proofs of both Theorems \ref{theo.main} and \ref{theo.main.C1} stated in the introduction.

\subsubsection{An illustrating example}
In order to highlight the main ideas behind the proof, let us first establish Corollary \ref{cor.creneau}, i.e. let us examine as a preliminary step the specific case where 
$\rho(k)= \sin(ka)/ka$ i.e. $\mu_\rho(dx)=\psi_\rho(x) dx$ with $\psi_{\rho}(x)=\frac{1}{2a}\mathds{1}_{[-a,a]}(x)$ with $a \in (0,\pi)$.
\par
\medskip
The proof relies on the explicit Kac--Rice formula \eqref{Kac-Rice formula} and on the compilation of three separate regimes: (i) roots far from $[-a,a]$, (ii) roots inside $[-a,a]/[-\delta,\delta]$ (with $\delta<<1$) and (iii) roots at some neighborhood of $\{0,-a,a\}$ of size $\delta$. Each of these regimes will require a different argument.
\par
\medskip
\noindent
\underline{i) Far from $[-a,a]$:}
\par
\medskip

Let $\delta>0$ and $x \in [-\pi,\pi]$ such that $\text{dist}(x,[-a,a])\ge\delta$ and where the distance is modulo $2\pi$.
Since in our case, the spectral measure is given by $\mu_{\rho}(dt)=\frac{1}{2\pi}\mathds{1}_{[-a,a]}(t)dt$, Equation \eqref{eq.corel1} reads
\[
\Esp[f_n(x)^2]={2\pi} \, K_n \ast \mu_{\rho}(x)=\frac{1}{2a}\int_{-a}^{a} K_n(x-y)dy=\frac{1}{2a}\int_{x-a}^{x+a}K_n(y)dy,
\]
where $K_n$ is the Fej\'er kernel which can be alternatively written as
\[K_n(x)= \frac{1}{2n}\frac{1}{\sin^2(x/2)}-\frac{1}{2n}\frac{\cos(nx)}{\sin^2(x/2)}.\]
In other words,
\[n\Esp[f_n(x)^2]=\frac{1}{2}\times \left( \underbrace{\frac{1}{2a}\int_{x-a}^{x+a}\frac{dy}{\sin^2(y/2)}}_{\ge 1} -\frac{1}{2a}\int_{x-a}^{x+a} \frac{\cos(ny)}{\sin^2(y/2)}\right).\]
We shall examine the second integral and establish that it is negligible with respect to the first one (which is greater than $1$) as $n\to\infty$. To do so,  we must rely on the Riemann--Lebesgue Lemma. More precisely, by integration by part, since the function $y \mapsto \frac{1}{\sin^2(y/2)}$ is $\mathcal{C}^{\infty}$ on any interval that does not contain zero, we have
\[ \frac{1}{2a} \int_{x-a}^{x+a}\frac{\cos(ny)}{\sin^2(y/2)}dy= \left[\frac{\sin(ny)}{n}\frac{1}{\sin^2(y/2)}\right]_{x-a}^{x+a}+\frac{1}{n}\int_{x-a}^{x+a} \sin(ny)\frac{\cos(y/2)}{\sin^3(y/2)}dy.\]
We observe that for all $y \in [x-a,x+a]$, we have $\text{dist}(y,2\pi\mathbb{Z})\ge\text{dist}(x,[-a,a])\ge \delta$ so that $|\sin(y/2)| \geq \inf_{u\in [\delta,\pi-\delta]}|\sin(u)|\ge \frac 1 2\delta$. We thus obtain that
\[ \frac{1}{2a} \int_{x-a}^{x+a}\frac{\cos(ny)}{\sin^2(y/2)}dy = O\left(\frac{1}{n\delta^3}\right).\] 
In the same way, Equation \eqref{eq.corel2} reads
\[\Esp[f_n'(x)^2]=\frac{{2\pi}}{\alpha_n}L_n\ast \mu_{\rho}(x)=\frac{1}{\alpha_n}\frac{1}{2a}\int_{x-a}^{x+a}L_n(y)dy,\]
where we recall that
\[L_n(x):= \frac{\alpha_n}{n}\left|\sum_{k=0}^{n}ke^{i k x}\right|^2=\frac{\alpha_n}{n}\frac{(n+1)^2}{4\sin^2(x/2)} \left|1-\frac{(1-e^{i(n+1)x})e^{-inx}}{(n+1)(1-e^{ix})}\right|^2.\]
From the above equation, by expanding the square, we have uniformly on $y \in [x-a,x+a]$ that
\[ \frac{1}{\alpha_n n} L_n(y)=\frac{1}{4\sin^2(y/2)}+ O\left(\frac{1}{n \delta^3}\right)+ O\left(\frac{1}{n^2 \delta^4}\right).\]
Hence
\[\frac{1}{n}\Esp[f_n'(x)^2]=\frac{1}{4}\times\underbrace{\frac{1}{2a}\int_{x-a}^{x+a}\frac{dy}{\sin^2(y/2)}}_{\ge 1}+ O \left(\frac{1}{n\delta^{3}}\right)+ O\left(\frac{1}{n^2 \delta^4}\right).\]
Gathering the previous computations yields, {uniformly in $x \in [-\pi,\pi]$ such that $\text{dist}(x,[-a,a])\ge\delta$}
\[\frac{1}{n^2}\frac{\Esp[f_n'(x)^2]}{\Esp[f_n(x)^2]}=\frac{1}{2}+O\left(\frac{1}{n\delta^{3}}\right)+ O\left(\frac{1}{n^2 \delta^4}\right).\]
Similarly, we have the uniform estimate
\[ 
K_n'(y)=\frac{\sin(ny)}{2\sin^2(y/2)}+ O\left(\frac{1}{n\delta^3} \right),
\]
which leads to
\[
\Esp[f_n(x)f_n'(x)]={\pi} \, K_n'\ast \mu_{\rho}(x)= \frac{1}{4}\times\frac{1}{2a}\int_{x-a}^{x+a}\frac{\sin(ny)}{\sin^2(y/2)}dy +O\left(\frac{1}{n\delta^3} \right). 
\]
Proceeding as before, we have the speed of convergence in Riemann--Lebesgue Lemma:
\[\frac{1}{2a}\int_{x-a}^{x+a}\frac{\sin(ny)}{\sin^2(y/2)}dy= O\left(\frac{1}{n\delta^{3}}\right),\]
which gives
\[\frac{\Esp[f_n(x)f_n'(x)]}{n\Esp[f_n^2(x)]} = O\left(\frac{1}{n\delta^{3}}\right).\]
Therefore, the previous estimates imply that {uniformly in $x \in [-\pi,\pi]$ such that $\text{dist}(x,[-a,a])\ge\delta$}, the integrand in Kac--Rice formula obeys the asymptotics
\begin{eqnarray*}
\frac{1}{n}\sqrt{I_n(x)}&:=&\sqrt{\frac{\Esp[f_n'(x)^2]}{n^2\Esp[f_n(x)^2]}-\left(\frac{\Esp[f_n(x)f_n'(x)]}{n\Esp[f_n(x)^2]}\right)^{2}}\\
&=&\frac{1}{\sqrt{2}}+ O\left(\frac{1}{n\delta^{3}}\right)+ O\left(\frac{1}{n^2 \delta^4}\right).
\end{eqnarray*}
Hence, if $[\alpha,\beta]$ is a subset of $[-\pi,\pi]$ such that $\text{dist}([\alpha,\beta],[-a,a])\ge \delta$, then
\[\frac{\Esp[\mathcal{N}(f_n,[\alpha,\beta])]}{n}= \frac{1}{\pi}\int_{\alpha}^{\beta} \frac{1}{n}\sqrt{I_n(x)}dx =\frac{\beta-\alpha}{\pi}\frac{1}{\sqrt{2}}+ O \left(\frac{1}{n\delta^{3}}\right)+ O\left(\frac{1}{n^2 \delta^4}\right).\]

\par
\medskip
\noindent
\underline{ii) Inside $[-a+\delta,a-\delta]/[-\delta,\delta]$:}
\par
\medskip

Let us take $\delta>0$ such that $[-a+\delta,a-\delta]/[-\delta,\delta ]$ is unempty. Now, if $[\alpha,\beta] \subset [-a+\delta,a-\delta]/[-\delta,\delta ]$, since $K_n$ and $L_n$ are regularizing trigonometric kernels, by Lemma \ref{lm.fej.leb}, for $x \in [\alpha,\beta]$:
$$
\begin{array}{l}
\Esp[f_n(x)^2]={2\pi} \, K_n\ast\psi_\rho(x)\xrightarrow[n\to\infty]~{2\pi} \, \psi_\rho(x)=\frac{\pi}{a},\\
\alpha_n \Esp[F'_n(x)^2]= {2\pi} \, L_n\ast\psi_\rho(x)\xrightarrow[n\to\infty]~{2\pi} \, \psi_\rho(x)=\frac{\pi}{a}.\\
\end{array}
$$
{
Moreover, the convergences above are uniform on $[\alpha, \beta]$. Indeed, we can write 
\[
K_n\ast\psi_\rho(x) - \frac{1}{2a} = \frac{1}{2a}  \left(\frac{1}{2\pi} \int_{[-\pi, \pi] \backslash [x-a, x+a]} K_n(y)dy\right)
\]
so that 
\begin{equation}\label{eq.uni1}
0  \leq K_n\ast\psi_\rho(x)  -\frac{1}{2a} \leq \frac{1}{2a}  \left(\frac{1}{2\pi} \int_{[-\pi, \pi] \backslash [-\delta, \delta]} K_n(y)dy\right) \xrightarrow[n\to\infty]{} 0,
\end{equation}
and in the same way 
\begin{equation}\label{eq.uni2}
0  \leq L_n\ast\psi_\rho(x)  -\frac{1}{2a} \leq \frac{1}{2a}  \left(\frac{1}{2\pi} \int_{[-\pi, \pi] \backslash [-\delta, \delta]}  L_n(y)dy\right)\xrightarrow[n\to\infty]{} 0.
\end{equation}
Besides, since $[\alpha,\beta]\subset [-a+\delta,a-\delta]/[-\delta,\delta ]$, combining Equation \eqref{eq.corel3} and Lemma \ref{lm.fej.leb} or using Lemma 2 of \cite{ADP19}, we also have 
\[
\max_{x\in[\alpha,\beta]} \left|\frac{\Esp\left[f_n(x)f_n'(x)\right]}{n}\right|\xrightarrow[n\to\infty]~0.
\]
As a result, the integrand $\frac{1}{n}\sqrt{I_n(x)}$ in Kac--Rice formula is bounded on $[\alpha, \beta]$ by dominated convergence, we deduce that
\[
\frac{\Esp[\mathcal{N}(f_n,[\alpha,\beta])]}{n}= \frac{1}{\pi}\int_{[\alpha,\beta]} \frac{1}{n}\sqrt{I_n(x)}dx =\frac{\beta-\alpha}{\pi}\frac{1}{\sqrt{3}}+ o_\delta(1).
\]
We stress that the above $o_\delta(1)$ possibly depends on the parameter $\delta$ but this will not be a problem in virtue of the next last step.
}

\par
\medskip
\noindent
\underline{iii) At the neighborhood of $\{a,-a,0\}$:}
\par\medskip

The roots of the trigonometric polynomial $f_n$ coincide with the roots in the unit circle of the algebraic polynomial
\[
Q_n(x):=x^n \sum_{k=1}^n \frac{a_k}{2} \left(x^k+\frac{1}{x^k}\right)+\frac{b_k}{2 i} \left(x^k-\frac{1}{x^k}\right).
\]
The constant coefficient of $Q_n$ is given by $\frac{a_n+i b_n}{2}$ and we have
\[
\mathbb{E}\left[\log\left(a_n^2+b_n^2\right)\right]\ge \mathbb{E}\left[\log(|a_n|)\right]
=\int_\mathbb{R}\log(|x|)\frac{e^{-\frac{x^2}{2}}}{\sqrt{2\pi}} dx
>-\infty.
\]

Besides, the coefficient of highest degree of $Q_n$ is also $\frac{a_n+i b_n}{2}$ and the previous computation applies in the same way. Finally the coefficient of degree $p\in\{1,\cdots,2n-1\}$ is given by

$$A_{n,p}=\frac{a_p-i b_p}{2}+\frac{a_{n-p}+i b_{n-p}}{2}.$$

Recalling that $a_k,b_k\sim\mathcal{N}(0,1)$ for every $k\ge 1$ it implies that $\max_{p\le 2 n} \mathbb{E}\left[\left|A_{n,p}\right|\right]<\infty.$ Then, $Q_n$ fulfills the assumptions of the Corollary 2.2 in \cite{PY15} which asserts that for every $r\in]0,1[$ and every $[\alpha,\beta]\subset0,2\pi]$, 

$$\left| \frac{\text{Card}\left(Q_n^{-1}\left(\{0\}\right)\cap\left\{z\in\mathbb{C}\,|\,r\le |z|\le\frac{1}{r}\,,\,\text{arg}(z)\in[\alpha,\beta]\right\}\right)}{n}-\frac{\beta-\alpha}{2\pi}\right|\le C_r \frac{\sqrt{\log(n)}}{n}.$$

It is clear that the number of roots in the angular sector $\left\{z\in\mathbb{C}\,|\,r\le |z|\le\frac{1}{r}\,,\,\text{arg}(z)\in[\alpha,\beta]\right\}$ is greater than the number of roots of $f_n$ in $[\alpha,\beta]$. As matter of fact, for some absolute constant $C$ and for every $\delta>0$ we then have 

\[
\frac{\Esp\left[\mathcal{N}(f_n,[\pm a-\delta, \pm a+\delta])\right]}{n}\le \frac{\delta}{\pi}+ C \sqrt{\frac{\log(n)}{n}}, \quad 
\frac{\Esp\left[\mathcal{N}(f_n,[-\delta,\delta])\right]}{n}\le \frac{\delta}{\pi}+ C \sqrt{\frac{\log(n)}{n}}.
\]

\par
\medskip
\noindent
\underline{Conclusion:}
\par\medskip

Using additivity in Kac--Rice formula, one may write
\begin{eqnarray*}
&&\frac{\Esp[\mathcal{N}(f_n,[-\pi,\pi])]}{n}=\underbrace{\frac{\Esp[\mathcal{N}(f_n,[-a+\delta,-\delta])]}{n}+\frac{\Esp[\mathcal{N}(f_n,[\delta,a-\delta])]}{n}}_{\text{Step (ii)}}\\
&&\\
&&+\underbrace{\frac{\Esp[\mathcal{N}(f_n,[-a-\delta,-a+\delta])]}{n}+\frac{\Esp[\mathcal{N}(f_n,[a-\delta,a+\delta])]}{n}+\frac{\Esp[\mathcal{N}(f_n,[-\delta,\delta])]}{n}}_{\text{Step (iii)}}\\
&&\\
&&+\underbrace{\frac{\Esp[\mathcal{N}(f_n,[-\pi,-a-\delta])]}{n}+\frac{\Esp[\mathcal{N}(f_n,[a+\delta,\pi])}{n}]}_{\text{Step (i)}}
\end{eqnarray*}
Gathering the conclusions of the three above steps, letting first $n\to\infty$ and then $\delta\to 0$, we indeed obtain the asymptotics stated in Corollary \ref{cor.creneau}, namely
\[
\frac{\Esp[\mathcal{N}(f_n,[0,2\pi])]}{n}=\frac{2\pi-2a}{\pi\sqrt{2}}+ \frac{2a}{\pi\sqrt{3}}+o(1).
\]

\subsubsection{Continuous spectral density with a simple nodal set}\label{proof-main-Thm}
In this section, we give the proof Theorem of \ref{theo.main.C1}. So 
let us consider $\psi_\rho$ a density function which is piecewise {continuous} and whose support satisfies the condition below:

$$\{\psi_\rho=0\}=\bigcup_{i=1}^p [a_i,b_i] \cup \bigcup_{j=1}^q \{c_j\}.$$

Mimicking the step (iii) of the proof of Corollary \ref{cor.creneau} in the last subsection, we can deal with some small neighborhood of the finite set $E:=\{0\}\cup \bigcup_{i=1}^p \{a_i,b_i\}\cup \bigcup_{j=1}^q\{c_j\}$. Namely, relying again on \cite[Corollary 2.2]{PY15}, for any $\delta>0$ we have

$$\frac{1}{n}\Esp\left[\mathcal{N}(f_n,E+B(0,\delta))\right]\le C_{q,p} \left(\delta +\sqrt{\frac{\log(n)}{n}}\right).$$

It is then sufficient to deal with $\mathcal{N}(f_n,[\alpha,\beta])$ where $[\alpha,\beta]$ is either included in the interior of the set $\{\psi_\rho=0\}$ or in the set $\{\psi>0\}\cap(0,2\pi)$. We details these two cases below.
\par
\medskip
\noindent
\underline{(a) $[\alpha,\beta]\subset \left[0,2\pi\right]/\left(E+B(0,\delta)\right)$ :}
\par
\medskip

{By our assumptions on $\psi_\rho$, we know that $\psi_\rho$ is both continuous and bounded from below on $[\alpha,\beta]$ so that there exists some positive constants $c,C$ such that 
\begin{equation}\label{eq.encadreindi}
c \, \mathds{1}_{[\alpha,\beta]}(x) \leq \psi_{\rho}(x) \leq C \, \mathds{1}_{[\alpha,\beta]}(x). 
\end{equation}
Then one may proceed exactly as in the step (ii) of the proof of Corollary \ref{cor.creneau}, combining the upper and lower bounds in Equation \eqref{eq.encadreindi} with the ones of Equations \eqref{eq.uni1} and \eqref{eq.uni2}, to deduce that uniformly in $x \in [\alpha, \beta]$, one has
\[
\frac{L_n\ast\psi_\rho(x)}{K_n\ast\psi_\rho(x)} \to 1, \quad \frac{K_n'\ast\psi_\rho(x)}{n K_n\ast\psi_\rho(x)} \to 0.
\]
The fact that these convergences are uniform implies that the integrand $\frac{1}{n}\sqrt{I_n(x)}$ in Kac--Rice formula is again bounded on $[\alpha, \beta]$ and by dominated convergence, as above we deduce that
\[
\frac{\Esp[\mathcal{N}(f_n,[\alpha,\beta])]}{n}= \frac{1}{\pi}\int_{[\alpha,\beta]} \frac{1}{n}\sqrt{I_n(x)}dx =\frac{\beta-\alpha}{\pi}\frac{1}{\sqrt{3}}+ o_\delta(1).
\]
}

\par
\medskip
\noindent
\underline{(b) $[\alpha,\beta]\subset \bigcup_{i=1}^p ]a_i+\delta,b_i-\delta[$ :}
\par\medskip

Let us adapt to this setting the computations of the step (i) of the proof of Corollary \ref{cor.creneau}. For convenience, we set $\text{Supp}(\psi_\rho):=\{\psi_\rho>0\}$ and $x-\text{Supp}(\psi_\rho):=\{x-u,\,u \in \text{Supp}(\psi_\rho)\}$. Then we have
\[
\begin{array}{ll}
\displaystyle{n\Esp\left[f_n^2(x)\right]} & \displaystyle{=\int_0^{2\pi} K_n(y) \psi_\rho(x-y) dy
= \int_{x-\text{Supp}(\psi_\rho)}K_n(y)\psi_\rho(x-y)dy}\\
\\
& \displaystyle{=\int_{x-\text{Supp}(\psi_\rho)}\frac{1}{2\sin^2\left(\frac{y}{2}\right)}\psi_\rho(x-y)dy-\int_{x-\text{Supp}(\psi_\rho)}\frac{\cos(ny)}{2\sin^2\left(\frac{y}{2}\right)}\psi_\rho(x-y)dy}.
\end{array}
\]
If $x\in [\alpha,\beta]$ and since $[\alpha,\beta]\subset \bigcup_{i=1}^p ]a_i+\delta,b_i-\delta[$ then $\text{dist}\left(x,\text{Supp}(\psi_\rho)\right)\ge \delta.$ Then, one can use Riemann--Lebesgue lemma again, to do so we first write

\begin{eqnarray*}
&&\int_{x-\text{Supp}(\psi_\rho)}\frac{\cos(ny)}{2\sin^2\left(\frac{y}{2}\right)}\psi_\rho(x-y)dy=\int_{\text{Supp}(\psi_\rho)}\frac{\cos(n(x-u))}{2\sin^2\left(\frac{x-u}{2}\right)}\psi_\rho(u)dy\\
&=& \cos(nx) \int_{\text{Supp}(\psi_\rho)}\frac{\cos(n u)}{2\sin^2\left(\frac{x-u}{2}\right)}\psi_\rho(u)dy+\sin(nx) \int_{\text{Supp}(\psi_\rho)}\frac{\sin(n u)}{2\sin^2\left(\frac{x-u}{2}\right)}\psi_\rho(u)dy.
\end{eqnarray*}

{Now, we recall that $\text{Supp}(\psi_\rho)$ can be written as a finite union of open intervals $\bigcup_{i=1}^r ]d_i,e_i[$. Let $\epsilon>0$, on each interval $]d_i,e_i[$ one may find $\psi_i\in\mathcal{C}^\infty_c(]d_i,e_i[)$ such that $\int_{d_i}^{e_i}|\psi_\rho(x)-\psi_i(x)|dx<\epsilon$. We then build a global approximation $\psi_\epsilon$ such that $\text{Supp}(\psi_\epsilon=0)=\text{Supp}(\psi_\rho=0)$ and for each $i\in\{1,\cdots,r\}$ we have $\psi_\epsilon=\psi_i$ on $]d_i,e_i[$. Then we may write
\begin{eqnarray*}
&&\left|\int_{\text{Supp}(\psi_\rho)}\frac{\cos(n u)}{2\sin^2\left(\frac{x-u}{2}\right)}\psi_\rho(u)dy-\int_{\text{Supp}(\psi_\rho)}\frac{\cos(n u)}{2\sin^2\left(\frac{x-u}{2}\right)}\psi_\epsilon(u)dy\right|\\
&\le&\sum_{i=1}^r\int_{d_i}^{e_i}\frac{\left|\psi_i(x)-\psi_\rho(x)\right|}{2\sin^2\left(\frac{x-u}{2}\right)}dx\le C_r\frac{\epsilon}{\delta^2}.
\end{eqnarray*}
By construction, we know that $\psi_\epsilon$ is $\mathcal{C}^1$ on $]d_i,e_i[$. One can then make an integration by parts formula as in step (i) of the proof of Corollary \ref{cor.creneau} which gives
\[
\int_{x-\text{Supp}(\psi_\epsilon)}\frac{\cos(ny)}{4\sin^2\left(\frac{y}{2}\right)}\psi_\epsilon(x-y)dy=O_\epsilon\left(\frac{1}{n\delta^3}\right).
\]
We insist on the fact that the above remainder depends on $\epsilon$ but this will not be a problem since we shall let $n\to\infty$ before letting $\epsilon,\delta\to0$.}
Besides, using that  $\sin^2\le 1$ and that $\int_{\text{supp}(\psi_\rho)}\psi_\rho(u)du=1$ we may deduce that $$\int_{x-\text{Supp}(\psi_\rho)}\frac{1}{2\sin^2\left(\frac{y}{2}\right)}\psi_\rho(x-y)dy\ge \frac{1}{2}.$$ {Thus we can write the following expansion:
\[
n\Esp\left[f_n^2(x)\right]=\int_0^{2\pi} K_n(y) \psi_\rho(x-y) dy=\underbrace{\int_{x-\text{Supp}(\psi_\rho)}\frac{1}{2\sin^2\left(\frac{y}{2}\right)}\psi_\rho(x-y)dy}_{\ge\frac 1 2}+O_\epsilon\left(\frac{1}{n\delta^3}\right)+O\left(\frac{\epsilon}{\delta^2}\right).
\]
}
Following exactly the same lines as in step (i) above, in fact this case is actually simpler and does not require Riemann--Lebesgue lemma {nor approximation of $\psi_\rho$ by smoother functions}, we get that
\[
\frac{1}{n}\Esp[f_n'(x)^2]=\frac{1}{4}\times\int_{x-\text{Supp}(\psi_\rho)}\frac{1}{\sin^2\left(\frac{y}{2}\right)}\psi_\rho(x-y)dy  + O \left(\frac{1}{n\delta^{3}}\right).
\]
{as well as
\[
\Esp[f_n(x)f_n'(x)]=\frac{1}{2}K_n'\ast \mu_{\rho}(x)= \frac{1}{4}\times\int_{x-\text{Supp}(\psi_\rho)}\frac{\sin(ny)}{\sin^2(y/2)}\psi_\rho(x-y)dy +O\left(\frac{1}{n\delta^3} \right).
\]
As before, up to approximating $\psi_\rho$ by a smoother function $\psi_\epsilon$ in $L^1\left(\text{Supp}(\psi_\rho)\right)$ and performing an integration by parts to quantify Riemann-Lebesgue convergence we can write the 
\[
\int_{x-\text{Supp}(\psi_\rho)}\frac{\sin(ny)}{\sin^2(y/2)}\psi_\rho(x-y)dy=O_\epsilon\left(\frac{1}{n\delta^3 }\right)+O\left(\frac{\epsilon}{\delta^2}\right)
\]
Finally, plugging these estimates into Kac--Rice formula leads to the desired estimate:
\[
\frac{\Esp[\mathcal{N}(f_n,[\alpha,\beta])]}{n}= \frac{1}{\pi}\int_{\alpha}^{\beta} \frac{1}{n}\sqrt{I_n(x)}dx =\frac{\beta-\alpha}{\pi}\frac{1}{\sqrt{2}}+ O_\epsilon \left(\frac{1}{n\delta^{3}}\right)+O\left(\frac{\epsilon}{\delta^2}\right).
\]
As before, we conclude the proof by additivity in the Kac--Rice formula and we first let $n\to\infty$, then we let $\epsilon\to 0$ and at the very end $\delta\to 0$.
}

\subsubsection{Spectral density with H\"older derivative}
Finally, we now give the proof of Theorem \ref{theo.main}, i.e. we get rid of the assumption on the nodal set $\{\psi_{\rho}=0\}$ in Theorem \ref{theo.main.C1} by requiring a slightly stronger regularity. 
{For simplicity assume here that $\psi_\rho$ is globally $\mathcal{C}^{1,\alpha}\left(\mathbb{T}\right)$} which means that $\psi_\rho$ is derivable on $\mathbb{R}$, $2\pi$-periodic and that
\[
\forall (x,y)\in[0,2\pi]^2, \left|\psi_\rho'(x)-\psi_\rho'(y)\right|\le [\psi_\rho']_{\alpha}~|x-y|^{\alpha}\,\,\text{where}\,\,[\psi_\rho']_{\alpha}>0.
\]
{We refer to Remark \ref{cas-ouvert-rem} at the end of the section for the case where $\psi_\rho$ is only $\mathcal{C}^1$ with H\"{o}lder derivative on an open set of full measure.} Recall that, based on Lemma \ref{lem.speedfejer}, for some constant $C_{\rho}$ that only depends on the spectral density $\psi_\rho$, we then have
\[
\begin{array}{l}
\displaystyle{\forall n\ge 1, \forall x \in \mathbb{R},\,\left|K_n\ast\psi_\rho(x)-\psi_\rho(x)\right|\le \frac{C_{\rho}}{n}},\\
\\
\displaystyle{\forall n\ge 1, \forall x \in \mathbb{R},\,\left|L_n\ast\psi_\rho(x)-\psi_\rho(x)\right|\le \frac{C_{\rho}}{n}}.
\end{array}
\]
The proof of Theorem \ref{theo.main} is divided in three distinct steps that we sketch below before giving details in the sequel.

\begin{enumerate}
\item Exploiting the rate of convergence recalled just above, we may prove that the integrand $\frac{\sqrt{I_n(x)}}{n}$ in Kac--Rice formula is bounded from above uniformly for $n\ge 1$ and $x\in[0,2\pi]$. As such, one is left to establish the pointwise convergence and use the dominated convergence.
\item As before, using the Fej\'er--Lebesgue Theorem at every point $x$ such that $\psi_{\rho}(x)>0$ we shall obtain that
$$\frac{L_n\ast\psi_{\rho}(x)}{K_n\ast\psi_{\rho}(x)}\sim\frac{\psi_\rho(x)}{\psi_\rho(x)}\sim 1,\,\,\frac{K_n'\ast \psi_\rho(x)}{n K_n\ast \psi_\rho(x)}\to 0$$
and so
$$\frac{\sqrt{I_n(x)}}{n}\xrightarrow[n\to\infty]~\frac{1}{\sqrt{3}}.$$
This step only requires the continuity of $\psi_\rho$ and the regularizing properties of $K_n$ and $L_n$ which were already used in the previous sections.
\item It remains to consider the case where $\psi_\rho(x)=0$ which is not covered by the previous situation as the limit in the Kac--Rice formula integrand involves the indeterminate form $\frac{0}{0}$. To bypass this problem, we must exploit the derivability of $\psi_\rho$ and establish the convergence in $L^2([0,2\pi])$ of $n \left(K_n \ast \psi_\rho-\psi_\rho\right)$ and $\frac{2}{n \alpha_n} \left(L_n \ast \psi_\rho-\psi_\rho\right)$ towards the same non-degenerate limit denoted by $\mathcal{L}[\psi_\rho](x)$. On the other hand, up to extracting a subsequence the convergence is also almost everywhere and as a consequence we obtain for $\psi_\rho(x)=0$
\begin{eqnarray*}
&&\frac{1}{n^2 \alpha_n}\frac{L_n\ast \psi_\rho(x)}{K_n\ast \psi_\rho(x)}=\frac{1}{n^2 \alpha_n}\frac{L_n\ast \psi_\rho(x)-\psi_\rho(x)}{K_n\ast \psi_\rho(x)-\psi_\rho(x)}\\
&&=\frac{1}{2}\frac{ \frac{2}{n \alpha_n}\left(L_n\ast \psi_\rho(x)-\psi_\rho(x)\right)}{n \left(K_n\ast \psi_\rho(x)-\psi_\rho(x)\right)}\xrightarrow[n\to\infty]~\frac{1}{2}\frac{\overbrace{\mathcal{L}[\psi_\rho](x)}^{>0}}{\mathcal{L}[\psi_\rho](x)}=\frac{1}{2}.
\end{eqnarray*}
Finally one is left to use the dominated convergence Theorem and let $n\to\infty$ in Kac--Rice formula.
\end{enumerate}

\par
\medskip
\noindent
\underline{Step 1: bounded integrand in Kac--Rice formula}~
\par
\medskip

First of all, we have the trivial upper bound

\begin{eqnarray*}
\frac{\sqrt{I_n(x)}}{n}&=&\sqrt{\frac{1}{n^2 \alpha_n }\frac{L_n \ast \psi_{\rho}(x) }{ K_n \ast \psi_{\rho}(x)}-\left(\frac{K_n' \ast \psi_{\rho}(x)}{2n K_n \ast \psi_{\rho}(x)}\right)^2}\\
&\le& \underbrace{\sqrt{\frac{(n+1)(2n+1)}{6 n^2}}}_{\to\frac{1}{\sqrt{3}}} \sqrt{\frac{L_n \ast \psi_{\rho}(x) }{ K_n \ast \psi_{\rho}(x)}}.
\end{eqnarray*}
Hence, one is left to bound the ratio $\frac{L_n \ast \psi_{\rho}(x) }{ K_n \ast \psi_{\rho}(x)}$. Let us assume that $\psi_\rho(x)>2 \frac{C_{\rho}}{n}$, relying on Lemma \ref{lem.speedfejer}, we may write
\begin{equation*}
\frac{L_n \ast \psi_{\rho}(x) }{ K_n \ast \psi_{\rho}(x)}\le\frac{\psi_\rho(x)+\frac{C_{\rho}}{n}}{\psi_\rho(x)-\frac{C_{\rho}}{n}}
= \frac{1+\frac{C_{\rho}}{n\psi_\rho(x)}}{1-\frac{C_{\rho}}{n\psi_\rho(x)}}
\le \frac{\frac 3 2}{\frac 1 2}=3.
\end{equation*}
Let us assume now that $\psi_\rho(x)\le 2 \frac{C_{\rho}}{n}$. Relying on the Lemma \ref{lem.nondeg}, there exists another constant $c_{\rho}>0$ that only depends on $\psi_\rho$ such that 
\[
K_n\ast \psi_\rho(x)\ge \frac{c_{\rho}}{n}.
\]
Thus we get
\begin{equation*}
\frac{L_n \ast \psi_{\rho}(x) }{ K_n \ast \psi_{\rho}(x)}\le \frac{n}{c_{\rho}} \left( \psi_\rho(x)+\frac{C_{\rho}}{n}\right)\le \frac{n}{c_{\rho}}\frac{3 C_{\rho}}{n}=3\frac{C_{\rho}}{c_{\rho}}.
\end{equation*}
Gathering the previous estimates entails that
$$\forall n\ge 1, \, \forall x\in\mathbb{R},\,\, \frac{L_n \ast \psi_{\rho}(x) }{ K_n \ast \psi_{\rho}(x)}\le 3+3 \frac{C_{\rho}}{c_{\rho}},$$
which concludes this first step.
\par
\medskip
\noindent
\underline{Step 2: the case where $\psi_{\rho}(x)>0$}~
\par
\medskip
For the second step, as in the previous sections, we rely on the fact that both $L_n$ and $K_n$ are regularizing kernels such that by Lemma \ref{lm.fej.leb}
\[
L_n \ast \psi_\rho(x)\to\psi_\rho(x)>0, \quad K_n \ast \psi_\rho(x)\to\psi_\rho(x)>0, \quad \frac{1}{n} K_n' \ast\psi_\rho(x)\to 0.
\]
Plugging theses estimates in Kac--Rice formula entails that

\begin{eqnarray*}
\frac{\sqrt{I_n(x)}}{n}=\sqrt{\underbrace{\frac{1}{n^2 \alpha_n }}_{\to \frac 1 3}\underbrace{\frac{L_n \ast \psi_{\rho}(x) }{ K_n \ast \psi_{\rho}(x)}}_{\to 1}-\underbrace{\left(\frac{K_n' \ast \psi_{\rho}(x)}{2n K_n \ast \psi_{\rho}(x)}\right)^2}_{\to 0}}\to \frac{1}{\sqrt{3}}.
\end{eqnarray*}

\par
\medskip
\noindent
\underline{Step 3: the case where $\psi_{\rho}(x)=0$}~
\par
\medskip

This last step  where $\psi_\rho(x)=0$ is arguably more delicate as it requires some regularity of the spectral density. While one could study the limit of the Kac--Rice integrand in the pointwise sense it is simpler to consider convergence in $L^2=L^2([0,2\pi])$ which will be sufficient for our purpose.
First, using the Fourier representation of Fej\'er kernel we can write

\begin{eqnarray*}
n\left(K_n\ast \psi_\rho(x)-\psi_\rho(x)\right)&\stackrel{L^2}{=}&~n\sum_{|k|> n}\underbrace{\rho(k)}_{=\hat{\psi}_\rho(k)}e^{i k x}-\sum_{|k|\le n}|k|\rho(k)e^{i k x}.
\end{eqnarray*}
Since $\psi_\rho$ is $\mathcal{C}^1$ then $(k \rho(k))_{k\in\mathbb{Z}}\in \textit{l}^{\hspace{0.05cm}2}(\mathbb{Z})$ and thus
\[
\left|\left| n\sum_{|k|> n}\rho(k)e^{i k x}\right|\right|^2_{L^2}=n^2\sum_{|k|>n} \rho(k)^2\le\sum_{|k|>n} k^2\rho(k)^2\xrightarrow[n\to\infty]~0.
\]
In the same way,
$$\sum_{|k|\le n}|k|\rho(k)e^{i k x}\xrightarrow[n\to\infty]{L^2}~\sum_{k\in\mathbb{Z}} |k|\rho(k)e^{i k x}.$$
Thus,

\begin{equation}\label{final-Kn}
n\left(K_n\ast \psi_\rho(x)-\psi_\rho(x)\right)\xrightarrow[n\to\infty]{L^2}~-\sum_{k\in\mathbb{Z}} |k|\rho(k)e^{i k x}:=\mathcal{L}\left[\psi_\rho\right](x).
\end{equation}

We imitate this strategy for $\frac{1}{n \alpha_n} \left(L_n \ast \psi_\rho-\psi_\rho\right)$ though the Fourier representation is slightly more involved. The Fourier decomposition of $L_n$ reads as (see e.g. \cite[p 209]{ADP19})
\[
L_n(x):= \alpha_n \sum_{r=-n}^{n}    \left(  \frac{1}{{n}} \sum_{k=1}^{n-|r|} k(|r|+k) \right) e^{i r x}.
 \]
so that
\begin{eqnarray*}
\frac{1}{n \alpha_n}\left(L_n \ast \psi_\rho-\psi_\rho\right)&=&\frac{1}{n} \sum_{|k|\le n-1}\rho(k)\left(\frac{1}{n}\sum_{r=1}^{n-|k|}r\left(|k|+r\right)\right)e^{i k x}-\frac{(n+1)(2n+1)}{6 n}\psi_\rho(x)\\
&=&\sum_{k\in\mathbb{Z}} C_{k,n} \rho(k) e^{i k x},
\end{eqnarray*}
where 
\[
C_{k,n}:=\mathds{1}_{\{|k|\le n-1\}}\frac{1}{n^2}\sum_{r=1}^{n-|k|}r\left(|k|+r\right)-\frac{(n+1)(2n+1)}{6 n}.
\]
Hence, one is left to study the convergence of $(C_{k,n}~\rho(k))_{k\in\mathbb{Z}}$ in $\textit{l}^{\hspace{0.05cm}2}(\mathbb{Z})$ as $n\to\infty$.
We may split $C_{k,n}$ in two terms by writing that
\[
C_{k,n}=\underbrace{\mathds{1}_{\{|k|\le n-1\}}\frac{1}{n^2}\sum_{r=1}^{n-|k|}r^2-\frac{(n+1)(2n+1)}{6 n}}_{:=A_{k,n}}+\underbrace{\mathds{1}_{\{|k|\le n-1\}}\frac{|k|}{n^2}\sum_{r=1}^{n-|k|}r}_{:=B_{k,n}}.
\]
Let us first deal with the term $A_{k,n}$. One has

\begin{eqnarray*}
&&\mathds{1}_{\{|k|\le n-1\}}\frac{1}{n^2}\sum_{r=1}^{n-|k|}r^2-\frac{(n+1)(2n+1)}{6 n}=\frac{1}{n^2}\left(\mathds{1}_{\{|k|\le n-1\}}\sum_{r=1}^{n-|k|}r^2-\sum_{r=1}^n r^2\right)\\
&&=-\frac{\sum_{n-|k|+1}^{n} r^2}{n^2}\mathds{1}_{\{|k|\le n-1\}}-\frac{(n+1)(2n+1)}{6 n} \mathds{1}_{\{|k|\ge n\}}
\end{eqnarray*}
Besides, as before, since $(k\rho(k))_{k\in\mathbb{Z}}\in \textit{l}^{\hspace{0.05cm}2}(\mathbb{Z})$ we get
\begin{eqnarray*}
\sum_{k\in\mathbb{Z}}\left(\frac{(n+1)(2n+1)}{6 n} \mathds{1}_{\{|k|\ge n\}}\right)^2\rho(k)^2~\le C n^2\sum_{|k|\ge n}\rho(k)^2\xrightarrow[n\to\infty]~0.
\end{eqnarray*}
On the other hand, we have

\begin{eqnarray*}
&&\left|\frac{1}{n^2}\sum_{n-|k|+1}^{n} r^2\mathds{1}_{\{|k|\le n-1\}}-|k|\mathds{1}_{\{|k|\le n-1\}}\right|=\mathds{1}_{\{|k|\le n-1\}}\frac{1}{n^2}\sum_{n-|k|+1}^n \left(n^2-r^2\right)\\
&&=\mathds{1}_{\{|k|\le n-1\}} \frac{1}{n^2} \sum_{n-|k|+1}^n(n-r)(n+r) \le \mathds{1}_{\{|k|\le n-1\}}  \frac{2 k}{n}.
\end{eqnarray*}
As a result we derive that
\begin{eqnarray*}
\sum_{k\in\mathbb{Z}}
&&\left|\frac{1}{n^2}\sum_{n-|k|+1}^{n} r^2\mathds{1}_{\{|k|\le n-1\}}-|k|\mathds{1}_{\{|k|\le n-1\}}\right|^2\rho(k)^2\\
&&\le \sum_{|k|\le n-1} \frac{4 k^2}{n^2}\rho(k)^2\le \frac{4}{n^2}\sum_{k\in\mathbb{Z}}k^2 \rho(k)^2\xrightarrow[n\to\infty]~0.
\end{eqnarray*}
Noticing that $(|k| \mathds{1}_{\{|k|\le n-1\}}\rho(k))_{k\in\mathbb{Z}}\xrightarrow[n\to\infty]{\textit{l}^{\hspace{0.05cm}2}(\mathbb{Z})}~(|k|\rho(k))_{k\in\mathbb{Z}}$ and gathering the previous facts implies 

\begin{equation}\label{intermediate1}
(A_{k,n}~\rho(k))_{k\in\mathbb{Z}}\xrightarrow[n\to\infty]{\textit{l}^{\hspace{0.05cm}2}(\mathbb{Z})}~(-|k|\rho(k))_{k\in\mathbb{Z}}.
\end{equation}
One is then left to deal with the only remaining sequence
\[
(B_{k,n}~\rho(k))_{k\in\mathbb{Z}}=\left(\rho(k)\mathds{1}_{|k|\le n-1}\frac{|k|}{n^2}\sum_{r=1}^{n-|k|}r\right)_{k\in\mathbb{Z}}.
\]
We simply notice that this sequence is dominated by $\left(|k| \rho(k)\right)_{k\in\mathbb{Z}}$ which by our assumptions belongs to $\textit{l}^{\hspace{0.05cm}2}(\mathbb{Z})$ and that for each fixed $k$:

\begin{eqnarray*}
&&\mathds{1}_{|k|\le n-1}\frac{|k|}{n^2}\sum_{r=1}^{n-|k|}r=|k|\mathds{1}_{|k|\le n-1}\left(\frac{1}{n}\sum_{r=1}^{n-|k|} \frac{r}{n}\right)\to~|k|\int_0^1 x dx=\frac{|k|}{2}.
\end{eqnarray*}
Using dominated convergence in $\textit{l}^{\hspace{0.05cm}2}(\mathbb{Z})$ entails that 
\begin{equation}\label{intermediate2}
(B_{k,n}~\rho(k))_{k\in\mathbb{Z}}\xrightarrow[n\to\infty]{\textit{l}^{\hspace{0.05cm}2}(\mathbb{Z})}~\left(\frac{|k|}{2}\rho(k)\right)_{k\in\mathbb{Z}}.
\end{equation}
Therefore, gathering the limits \eqref{intermediate1} and \eqref{intermediate2}, we get

\begin{equation}\label{intermediate3}
(C_{k,n}~\rho(k))_{k\in\mathbb{Z}}\xrightarrow[n\to\infty]{\textit{l}^{\hspace{0.05cm}2}(\mathbb{Z})}~\left(-\frac{|k|}{2}\rho(k)\right)_{k\in\mathbb{Z}}.
\end{equation}
As a consequence we conclude that
\begin{equation}\label{final-L_n}
\frac{1}{n\alpha_n} \left(L_n\ast \psi_\rho-\psi_\rho\right)\xrightarrow[n\to\infty]{L^2}~-\frac{1}{2}\sum_{k\in\mathbb{Z}}|k|\rho(k)e^{i k x}=\frac{1}{2} \mathcal{L}[\psi_\rho](x).
\end{equation}
\par
\medskip
\noindent
\underline{Conclusion of the proof:}
\par\medskip

We argue by contradiction and we assume that as $n$ goes to infinity
\[
\frac{\Esp\left[\mathcal{N}(f_n,[0,2\pi])\right]}{n}\not \rightarrow \frac{\lambda(\{\psi_{\rho}=0\})}{\pi \sqrt{2}} + \frac{2\pi - \lambda(\{\psi_{\rho}=0\})}{\pi\sqrt{3}}.
\]
Then, for some subsequence $(n_p)_{p\ge 1}$ and $\eta>0$ we would have
\[
\left|\frac{\Esp\left[\mathcal{N}(f_{n_p},[0,2\pi])\right]}{n_p}- \frac{\lambda(\{\psi_{\rho}=0\})}{\pi \sqrt{2}} - \frac{2\pi - \lambda(\{\psi_{\rho}=0\})}{\pi\sqrt{3}}\right|\ge \eta.
\]
Up to extracting another subsequence, thanks to Equations \eqref{final-Kn} and \eqref{final-L_n}, we may assume that
\[
n_p\left(K_{n_p}\ast\psi_\rho-\psi_\rho\right) \to \mathcal{L}[\psi_\rho](x), \qquad \frac{1}{n_p\alpha_{n_p}} \left(L_{n_p}\ast\psi_\rho-\psi_\rho\right)\to \frac{1}{2}\mathcal{L}[\psi_\rho](x).
\]
By Kac--Rice formula, we would get
\[
\frac{\Esp\left[\mathcal{N}(f_{n_p},[0,2\pi])\right]}{n_p}=\int_{\{\psi_\rho\neq 0\}} \frac{\sqrt{I_{n_p}(x)}}{n_p}dx+\int_{\{\psi_\rho= 0\}} \frac{\sqrt{I_{n_p}(x)}}{n_p}dx.
\]
The first integral would converge towards $\frac{1}{\sqrt{3}}\lambda\left(\psi_\rho\neq 0\right)$ by combining the conclusions of Step 1 and Step 2 above, together with dominated convergence. To deal with the second integral we must observe the following facts:

\begin{itemize}
\item If $\psi_\rho(x)=0$ then:
\begin{eqnarray*}
&&n_p \left(K_{n_p}\ast\psi_{\rho}(x)-\psi_\rho(x)\right)=\frac{1}{2\pi}\int_0^{2\pi}\frac{\sin^2(\frac{n_p y}{2})}{\sin^2(\frac{y}{2})}\psi_\rho(x-y)dy\\
&&\ge \frac{1}{2\pi}\int_0^{2\pi}\sin^2(\frac{n_p y}{2})\psi_\rho(x-y)dy\\
&&\xrightarrow[n\to\infty]{\text{Riemann-Lebesgue}}~\frac{1}{4\pi}\int_0^{2\pi}\psi_\rho(x-y)dy=\frac{1}{2}.
\end{eqnarray*}

This implies in particular that $\mathcal{L}[\psi_\rho](x)\ge \frac{1}{2}$ provided that $\psi_\rho(x)=0$.
\item If $\psi_\rho(x)=0$ then $\psi_\rho'(x)=0$ since $\psi_\rho$ is non negative. Thus, $$K_{n_p}'\ast \psi_\rho(x)=K_{n_p}\ast\psi_\rho'(x)\to \psi_\rho'(x)=0.$$ Based on the previous observation we then have that $\frac{K_{n_p}'\ast\psi_\rho(x)}{n_p K_{n_p}\ast\psi_\rho(x)} \to 0$.
\end{itemize}
Then, relying on the third step of the proof, we would have
\[
\psi_\rho(x)=0 \Rightarrow \frac{\sqrt{I_{n_p}(x)}}{n_p}\to \frac{1}{\sqrt{2}}.
\]
By the first step of the proof and dominated convergence again, the second integral would converge to $\frac{1}{\sqrt{2}}\lambda\left(\psi_\rho=0\right)$ which brings a contradiction and thus achieves the proof of Theorem \ref{theo.main}.

{
\begin{rmk}\label{cas-ouvert-rem}Note that we gave the proof of Theorem \ref{theo.main} is the case where $\psi_{\rho}$ is globally $\mathcal C^{1, \alpha}$. If $\psi_{\rho}$ is only $\mathcal C^{1, \alpha}$ on a open set of full measure, then the complementary set is a compact set of zero measure and can thus be covered by a finite number of intervals of arbitrary small length. On each of these intervals, using Corollary 2.2 of \cite{PY15} as we have done in Step (iii) of the proof of Corollary \ref{cor.creneau}, we obtain that, as $n$ goes to infinity, the normalized expected number of zeros of $f_n$ in the union of these intervals is arbitrary small, hence the conclusion of Theorem \ref{theo.main}.
\end{rmk}
}

\begin{rmk}As a by product, the previous proof suggests that the Kac-Rice integrand $\sqrt{I_n(x)}/n$ converges pointwise towards $\frac{1}{\sqrt{2}}\textbf{1}_{\psi(x)=0}+\frac{1}{\sqrt{3}}\textbf{1}_{\psi(x)\neq 0}$. Note that, in the case $\psi(x)=0$, for simplicity the previous proof uses convergences in $L^2$ but one could actually work in the pointwise sense. In particular, the limit is not continuous hence the convergence cannot be uniform. Besides, the pointwise limit is not constant which highlights the fact that the roots are not equidistributed anymore as soon as the spectral density $\psi_\rho$ vanishes. Moreover, Corollary 2.2 of \cite{PY15} guarantees the angular equidistribution of the roots on a domain of type $\{|r|\le |z|\le \frac{1}{r}\}$ , thus it seems that letting $r\to 1$ breaks the equidistribution in this setting. Finally, assuming that $\psi_\rho$ is globally $\mathcal{C}^{1,\alpha}$ provides a natural instance where $\sqrt{I_n}/n$ is uniformly bounded and thus improves in this particular setting the upper bound provided by the same Corollary 2.2 in \cite{PY15}.
\end{rmk}

\section{Salem--Zygmund type Central Limit Theorems}\label{sec.SZCLT}
The goal of this section is to give the proofs of Theorems \ref{prop.conv.fc} and \ref{thm.SZ.func}, that is the unidimensional and functional Central Limit Theorems \`a la Salem--Zygmund stated in the introduction. In order to do so, let us first give a simple estimate on the two points correlation function of the model.
 
\subsection{On the two points correlation function}
Recall that if $f_n$ is defined by Equation \eqref{def.model}, we have $\mathbb E[f_n^2(x)]= {2\pi} \, K_n \ast \mu_{\rho}(x)$ where $\mu_{\rho}$ is the associated spectral measure. Similarly the two points correlation function can be expressed as a kind of convolution with the following polarized Fej\'er kernel defined on $[0,2\pi]^2$ by
\begin{equation}\label{eq.bivariate.fejer}
K_n(x,y):= \frac{1}{n} \frac{\sin \left(\frac{n x}{2}\right)\sin \left(\frac{n y}{2}\right)}{\sin\left(\frac{x}{2}\right)\sin\left(\frac{y}{2}\right)}.
\end{equation}
\begin{lma} \label{lem.twopoints}
For any $x,y \in [0, 2\pi]$, we have 
\begin{equation}\label{eq.twopoints}
\mathbb E[f_n(x)f_n(y)] =\cos\left({\frac{(n+1)}{2}(x-y)}\right) \int_0^{2\pi} K_n(x-u,y-u)\mu_{\rho}(du).
\end{equation}
\end{lma}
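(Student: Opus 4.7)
The plan is to expand $\mathbb{E}[f_n(x)f_n(y)]$ directly from the definition and then recognize the bivariate Fej\'er kernel after summing the resulting geometric progressions.

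First I would expand, using the independence of $(a_k)$ and $(b_k)$ together with the stationarity $\mathbb{E}[a_ka_\ell]=\mathbb{E}[b_kb_\ell]=\rho(k-\ell)$, and the identity $\cos(kx)\cos(\ell y)+\sin(kx)\sin(\ell y)=\cos(kx-\ell y)$:
\[
\mathbb{E}[f_n(x)f_n(y)]=\frac{1}{n}\sum_{k,\ell=1}^{n}\rho(k-\ell)\cos(kx-\ell y).
\]
Next, inserting the Bochner--Herglotz representation $\rho(k-\ell)=\int e^{-i(k-\ell)u}\mu_\rho(du)$ and writing $\cos(kx-\ell y)=\mathrm{Re}\,e^{i(kx-\ell y)}$, the double sum factors into a product:
\[
\sum_{k,\ell=1}^{n}e^{-i(k-\ell)u}e^{i(kx-\ell y)}=\left(\sum_{k=1}^{n}e^{ik(x-u)}\right)\overline{\left(\sum_{\ell=1}^{n}e^{i\ell(y-u)}\right)}.
\]

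The key step is then the elementary closed-form
\[
\sum_{k=1}^{n}e^{ik\theta}=e^{i(n+1)\theta/2}\,\frac{\sin(n\theta/2)}{\sin(\theta/2)},
\]
applied with $\theta=x-u$ and $\theta=y-u$. Multiplying the two expressions, the amplitudes combine exactly into $n\,K_n(x-u,y-u)$ as defined in \eqref{eq.bivariate.fejer}, while the phase factors combine into $e^{i(n+1)(x-y)/2}$. Taking real parts (noting $K_n(x-u,y-u)\in\mathbb{R}$) yields
\[
\sum_{k,\ell=1}^{n}\cos(kx-\ell y-(k-\ell)u)=n\cos\!\left(\tfrac{(n+1)}{2}(x-y)\right)K_n(x-u,y-u).
\]

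Substituting this back, dividing by $n$, and interchanging the finite sum with the integration against $\mu_\rho$ (justified since the sum is a bounded continuous function of $u$) produces precisely \eqref{eq.twopoints}. The domain $[0,2\pi]$ in the stated integral is the same as $[-\pi,\pi]$ by $2\pi$-periodicity of the integrand in $u$. The only nontrivial step is the clean factorization leading to $K_n(x-u,y-u)$; once the trigonometric identity for the geometric sum is written in the symmetric form above, everything else is bookkeeping.
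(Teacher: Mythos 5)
Your proof is correct and follows essentially the same route as the paper's: expand $\mathbb{E}[f_n(x)f_n(y)]$ into $\frac{1}{n}\sum_{k,\ell}\rho(k-\ell)\cos(kx-\ell y)$, insert the Bochner--Herglotz representation, factor the double sum into a product of geometric sums, and identify the phase and the bivariate Fej\'er kernel before taking real parts. The only difference is cosmetic — you apply the symmetric closed form $\sum_{k=1}^n e^{ik\theta}=e^{i(n+1)\theta/2}\sin(n\theta/2)/\sin(\theta/2)$ directly, whereas the paper shifts indices to run from $0$ to $n-1$ before simplifying — but the computation is identical in substance.
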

\begin{proof}
If $f_n$ is defined by Equation \eqref{def.model}, using the fact that $\Esp[a_k a_{\ell}]=\Esp[b_k b_{\ell}]=\rho(k-\ell)$, standard computations give
\[
\mathbb E[f_n(x)f_n(y)] =\frac{1}{n} \sum_{k,l=1}^n \rho(k-l) \cos(kx-l y) = \Re \left(  \frac{1}{n} \sum_{k,l=1}^n \rho(k-l) e^{i(kx-l y)} \right). 
\]
Writing that for all integer $k$, 
\[
\rho(k)=\widehat{\mu_{\rho}}(k) =\int_0^{2\pi} e^{-i ku} \mu_{\rho}(du),
\]
it results that
\[
\begin{array}{ll}
\displaystyle{\frac{1}{n} \sum_{k,l=1}^n \rho(k-l) e^{i(kx-l y)}} 
 =\displaystyle{ \int_0^{2\pi}   \left( \frac{1}{n} \sum_{k,l=1}^n  e^{i(k(x-u)-l (y-u))} \right) \mu_{\rho}(du)}\\
\\
 =\displaystyle{ e^{i(x-y)}  \int_0^{2\pi}   \left( \frac{1}{n} \sum_{k,l=0}^{n-1}  e^{i k(x-u)} e^{-i \,l (y-u)} \right) \mu_{\rho}(du)}\\
\\
 =\displaystyle{e^{i(x-y)}  \int_0^{2\pi}   \left( \frac{1}{n} \frac{1-e^{i n (x-u)}}{1-e^{i(x-u)}} \frac{1-e^{-i \,n (y-u)}}{1-e^{-i (y-u)}} \right) \mu_{\rho}(du)}
\\
\\
 =\displaystyle{e^{i\frac{(n+1)}{2}(x-y)} \int_0^{2\pi} K_n(x-u,y-u) \mu_{\rho}(du)}.
\end{array}
\]
Hence, we obtain 
\[
\mathbb E[f_n(x)f_n(y)] =\cos\left({\frac{(n+1)}{2}(x-y)}\right) \int_0^{2\pi} K_n(x-u,y-u)\mu_{\rho}(du).
\]
Remark that when $x=y$, we indeed fall back on $\mathbb E[f_n(x)^2]={2\pi} \, K_n \ast \mu_{\rho}(x)$.
\end{proof}

\begin{lma} \label{lm.polfej}
For all $x,y \in [0,2\pi]$, $n \geq 1$ and for $\varepsilon>0$ arbitrarily chosen, if $d(x,y)$ denotes the distance between $x$ and $y$ modulo $2\pi$, there exist a constant $C>0$ such that 
\begin{eqnarray*}
&&\int_0^{2\pi} K_n(x-u,y-u)\mu_{\rho}(du)\\
&\leq& C\left(\frac{\sqrt{K_n \ast \mu_{\rho}(x)}+\sqrt{K_n \ast \mu_{\rho}(y)}}{\sqrt{n}\varepsilon} +\frac{1}{n\varepsilon^2}+ \sqrt{K_n\ast \mu_{\rho}(x)}\sqrt{K_n \ast \mu_{\rho}(y)}\mathds{1}_{d(x,y)\leq 2\varepsilon} \right).
\end{eqnarray*}
\end{lma}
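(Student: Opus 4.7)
The starting observation is the factorization
\[
K_n(a,b)^2 = \frac{1}{n^2}\frac{\sin^2(na/2)\sin^2(nb/2)}{\sin^2(a/2)\sin^2(b/2)} = K_n(a)\,K_n(b),
\]
where $K_n$ on the right is the ordinary Fej\'er kernel. Hence $|K_n(x-u,y-u)| = \sqrt{K_n(x-u)}\sqrt{K_n(y-u)}$ pointwise in $u$. My plan is to split the proof into the two regimes $d(x,y)\le 2\varepsilon$ and $d(x,y)>2\varepsilon$, and in each regime turn the above factorization into a usable estimate using either Cauchy--Schwarz globally or the pointwise bound $K_n(a)\le C/(na^2)$ from Lemma \ref{lm.prop.L} away from the diagonal.

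First, if $d(x,y)\le 2\varepsilon$, I would apply Cauchy--Schwarz directly on the integral against the probability measure $\mu_\rho$:
\[
\int_0^{2\pi}\!\! |K_n(x-u,y-u)|\,\mu_\rho(du) \;\le\; \sqrt{\int K_n(x-u)\mu_\rho(du)}\sqrt{\int K_n(y-u)\mu_\rho(du)},
\]
which up to the factor $2\pi$ coming from the convolution convention equals a constant multiple of $\sqrt{K_n\ast\mu_\rho(x)}\sqrt{K_n\ast\mu_\rho(y)}$, exactly the third term (the indicator being active here).

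Next, when $d(x,y)>2\varepsilon$, I introduce the disjoint neighborhoods
\[
A_x=\{u:d(u,x)\le\varepsilon\},\quad A_y=\{u:d(u,y)\le\varepsilon\},\quad A_{xy}=[0,2\pi]\setminus(A_x\cup A_y),
\]
which are disjoint precisely because $d(x,y)>2\varepsilon$. On $A_x$ one has $d(u,y)\ge d(x,y)-\varepsilon\ge\varepsilon$, so by Lemma \ref{lm.prop.L} we get $\sqrt{K_n(y-u)}\le C/(\sqrt{n}\varepsilon)$, and thus
\[
\int_{A_x}|K_n(x-u,y-u)|\mu_\rho(du) \le \frac{C}{\sqrt{n}\varepsilon}\int_{A_x}\!\!\sqrt{K_n(x-u)}\mu_\rho(du) \le \frac{C'}{\sqrt{n}\varepsilon}\sqrt{K_n\ast\mu_\rho(x)},
\]
the last step by Cauchy--Schwarz using $\mu_\rho(A_x)\le 1$. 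The symmetric bound on $A_y$ produces the $\sqrt{K_n\ast\mu_\rho(y)}$ contribution. Finally on $A_{xy}$ both factors $K_n(x-u)$ and $K_n(y-u)$ are controlled by $C/(n\varepsilon^2)$, so the full integrand is bounded by $C/(n\varepsilon^2)$ and using that $\mu_\rho$ is a probability measure yields the middle $1/(n\varepsilon^2)$ term.

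The proof is essentially bookkeeping once the factorization of $K_n(a,b)$ is recognized; no step involves a serious obstacle, though one should be slightly careful with the constants arising from the $\frac{1}{2\pi}$ in the convolution convention versus the raw integral $\int K_n(x-u)\mu_\rho(du)=2\pi\,K_n\ast\mu_\rho(x)$, which is absorbed into the constant $C$ of the statement.
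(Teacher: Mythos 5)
Your proposal is correct and takes essentially the same route as the paper: the factorization $|K_n(a,b)|=\sqrt{K_n(a)K_n(b)}$ that you make explicit is exactly what underlies the paper's application of Cauchy--Schwarz, and the partition of the domain of integration into ``close to $x$/far from $y$'', ``far/close'', ``far/far'', and ``close/close'' (the last being empty when $d(x,y)>2\varepsilon$) matches the paper's four-way split via indicator functions. The only superficial difference is that you first dichotomize on $d(x,y)\le 2\varepsilon$ versus $d(x,y)>2\varepsilon$ and then run the region decomposition, whereas the paper does the region decomposition globally and lets the fourth region automatically generate the indicator $\mathds{1}_{d(x,y)\le 2\varepsilon}$; both yield the stated bound with only cosmetic reorganization.
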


\begin{proof}
Notice that  
\[
\int_0^{2\pi}K_n(x-u,y-u)\mu_{\rho}(du)= \Esp_{U}\left[\frac{1}{n} \frac{\sin\left(\frac{n(x-U)}{2}\right) \sin \left(\frac{n(y-U)}{2}\right)}{\sin \left(\frac{x-U}{2}\right)\sin \left(\frac{y-U}{2}\right)} \right],
\]
where $U$ is a random variable with values in $[0,2\pi]$ whose law is $\mu_{\rho}$.
Let us fix $\varepsilon>0$ and recall that $d(x,y)$ is the distance between $x$ and $y$ modulo $2\pi$. If $U$ is far enough from both points $x$ and $y$, we have for a positive constant $C$ which may change from line to line
\[
\left| \, \mathbb E_{U} \left[ \frac{1}{n} \frac{\sin \left( \frac{n (x-U)}{2}\right)}{\sin \left( \frac{(x-U)}{2}\right)} \mathds{1}_{d(x,U)>\varepsilon} \frac{\sin \left( \frac{n (y-U)}{2}\right)}{\sin \left( \frac{(y-U)}{2}\right)} \mathds{1}_{d(y,U)>\varepsilon}\right]\, \right|\leq \frac{C}{n \varepsilon^2}.
\]
If $U$ is close to $x$ and far enough from $y$, by Cauchy--Schwarz inequality, we get
\[
\left| \,\mathbb E_{U} \left[ \frac{1}{n} \frac{\sin \left( \frac{n (x-U)}{2}\right)}{\sin \left( \frac{(x-U)}{2}\right)} \mathds{1}_{d(x,U)\leq \varepsilon} \frac{\sin \left( \frac{n (y-U)}{2}\right)}{\sin \left( \frac{(y-U)}{2}\right)} \mathds{1}_{d(y,U)>\varepsilon}\right] \, \right|\leq \frac{C}{\sqrt{n} \varepsilon} \mathbb E_U \left[ \frac{1}{n} \frac{\sin^2 \left( \frac{n (x-U)}{2}\right)}{\sin^2 \left( \frac{(x-U)}{2}\right)} \right]^{1/2},
\]
that is to say
\[
\left| \,\mathbb E_{\mu_{\rho}} \left[ \frac{1}{n} \frac{\sin \left( \frac{n (x-U)}{2}\right)}{\sin \left( \frac{(x-U)}{2}\right)} \mathds{1}_{d(x,U)\leq \varepsilon} \frac{\sin \left( \frac{n (y-U)}{2}\right)}{\sin \left( \frac{(y-U)}{2}\right)} \mathds{1}_{d(y,U)>\varepsilon}\right] \, \right| \leq 
\frac{C}{\sqrt{n} \varepsilon}  \sqrt{K_n \ast \mu_{\rho}(x)}.
\]
Finally, if $U$ is close to both $x$ and $y$, Cauchy--Schwarz inequality gives again
\[
\left| \,\mathbb E_{\mu_{\rho}} \left[ \frac{1}{n} \frac{\sin \left( \frac{n (x-U)}{2}\right)}{\sin \left( \frac{(x-U)}{2}\right)}  \frac{\sin \left( \frac{n (y-U)}{2}\right)}{\sin \left( \frac{(y-U)}{2}\right)} \mathds{1}_{\substack{d(x,U)\leq \varepsilon \\ d(y,U)\leq \varepsilon} }\right] \, \right|\leq C \sqrt{K_n \ast \mu_{\rho}(x)} \sqrt{K_n \ast \mu_{\rho}(y)} \mathds{1}_{d(x,y) \leq 2\varepsilon}.
\]
\end{proof}

From Lemma \ref{lem.twopoints} and Lemma \ref{lm.polfej}, one can then deduce the following estimates which will play a key role in the proof of the Central Limit Theorems \`a la Salem--Zygmund. 
Recall that the probability $\mathbb P$ and the expectation $\mathbb E$ are the ones associated with the random coefficients $(a_k,b_k)$ of the trigonometric polynomial $f_n(x)$. Now, if $X$ and $Y$ are two independent uniform random variables in $[0,2\pi]$, independent of the random coefficients $(a_k,b_k)$, we will denote by $\mathbb P_{X,Y}$ and $\mathbb E_{X,Y}$ the associate law and expectation and by $\mathbb P_X$, $\mathbb P_Y$, $\mathbb E_X$, $\mathbb E_Y$ the marginals.

\begin{lma} \label{lm.cov} Let $X$ and $Y$ be two independent random variables with uniform distribution in $[0,2\pi]$, independent of the random coefficients $(a_k,b_k)$. There exists a universal constant $C>0$ such that, for any $\varepsilon>0$ and any integer $n \geq 1$
\[
\Esp_{X,Y}\left|\Esp[f_n(X)f_n(Y)]\right| \leq C \left(\frac{1}{n\varepsilon^2}+\frac{1}{\sqrt{n}\varepsilon}+\sqrt{\varepsilon}\right).
\]
In particular, choosing $\varepsilon = n^{-1/3}$, we have 
\[
\Esp_{X,Y}\left|\Esp[f_n(X)f_n(Y)]\right| \leq \frac{C}{n^{1/6}}.
\]
\end{lma}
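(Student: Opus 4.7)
The plan is to combine the two previous results of this subsection: use Lemma \ref{lem.twopoints} to replace the two-points correlation function by the polarized Fej\'er integral, and then use Lemma \ref{lm.polfej} to bound this integral pointwise. Concretely, since $|\cos(\cdot)|\le 1$, one has
\[
|\Esp[f_n(X)f_n(Y)]|\le \left|\int_0^{2\pi} K_n(X-u,Y-u)\mu_\rho(du)\right|,
\]
and Lemma \ref{lm.polfej} yields, for any $\varepsilon>0$,
\[
|\Esp[f_n(X)f_n(Y)]|\le C\left(\frac{\sqrt{K_n\ast\mu_\rho(X)}+\sqrt{K_n\ast\mu_\rho(Y)}}{\sqrt{n}\,\varepsilon}+\frac{1}{n\varepsilon^2}+\sqrt{K_n\ast\mu_\rho(X)}\sqrt{K_n\ast\mu_\rho(Y)}\,\mathds{1}_{d(X,Y)\le 2\varepsilon}\right).
\]
It then suffices to take expectation with respect to the two independent uniform variables $X$ and $Y$ and control each of the three terms on the right-hand side.

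The preliminary moment computation is that, by Fubini and the normalization $\frac{1}{2\pi}\int_0^{2\pi} K_n(x)dx=1$,
\[
\Esp_X\bigl[K_n\ast\mu_\rho(X)\bigr]=\frac{1}{2\pi}\int_0^{2\pi} K_n\ast\mu_\rho(x)\,dx=\frac{1}{2\pi},
\]
uniformly in $n$. Consequently, Jensen's inequality gives $\Esp_X[\sqrt{K_n\ast\mu_\rho(X)}]\le 1/\sqrt{2\pi}$, so that the two linear terms contribute $O(1/(\sqrt{n}\,\varepsilon))$ after integration; the deterministic term clearly contributes $1/(n\varepsilon^2)$.

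The only slightly delicate term is the one containing the indicator, for which I would apply Cauchy--Schwarz under $\Prob_{X,Y}$ to split it as
\[
\Esp_{X,Y}\bigl[\sqrt{K_n\ast\mu_\rho(X)}\sqrt{K_n\ast\mu_\rho(Y)}\,\mathds{1}_{d(X,Y)\le 2\varepsilon}\bigr]\le \bigl(\Esp_{X,Y}[K_n\ast\mu_\rho(X)\,K_n\ast\mu_\rho(Y)]\bigr)^{1/2}\bigl(\Prob(d(X,Y)\le 2\varepsilon)\bigr)^{1/2}.
\]
By the independence of $X$ and $Y$, the first factor factorizes and equals $1/(2\pi)$, while $\Prob(d(X,Y)\le 2\varepsilon)\le 2\varepsilon/\pi$, which gives a bound of order $\sqrt{\varepsilon}$. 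Gathering the three contributions yields the announced estimate
\[
\Esp_{X,Y}|\Esp[f_n(X)f_n(Y)]|\le C\Bigl(\tfrac{1}{n\varepsilon^2}+\tfrac{1}{\sqrt{n}\,\varepsilon}+\sqrt{\varepsilon}\Bigr),
\]
and the particular choice $\varepsilon=n^{-1/3}$ balances the two dominant terms $1/(\sqrt{n}\,\varepsilon)$ and $\sqrt{\varepsilon}$ at the common rate $n^{-1/6}$, while the remaining term $1/(n\varepsilon^2)=n^{-1/3}$ is negligible. The main conceptual point, as in standard Salem--Zygmund arguments, is that upper bounds on $K_n\ast\mu_\rho$ must be replaced by $L^1$-type averages in the spectral variables; no further technical obstacle arises since Lemma \ref{lm.polfej} has already absorbed the pointwise singularity of the polarized Fej\'er kernel on the diagonal.
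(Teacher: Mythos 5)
Your proposal is correct and follows the paper's proof essentially step by step: combine Lemma \ref{lem.twopoints} with $|\cos|\le 1$, apply the pointwise bound of Lemma \ref{lm.polfej}, then integrate each of the three terms using Fubini and the $L^1$-normalization of $K_n$. The only cosmetic difference is in the indicator term, where you group the Cauchy--Schwarz split as $(\sqrt{K_n\ast\mu_\rho(X)}\sqrt{K_n\ast\mu_\rho(Y)})\times\mathds{1}_{d(X,Y)\le 2\varepsilon}$ whereas the paper pairs each $\sqrt{K_n\ast\mu_\rho}$ with a half-power of the indicator and then integrates first in $X$; both routes give the same $O(\sqrt{\varepsilon})$ rate.
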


\begin{proof}
Combining Equation \eqref{eq.twopoints} of Lemma \ref{lem.twopoints} and Lemma \ref{lm.polfej}, we have 
\[
\begin{array}{ll}
\displaystyle{\Esp_{X,Y}\left|\Esp[f_n(X)f_n(Y)]\right|}  & \displaystyle{ \leq \frac{C}{\sqrt{n}\epsilon} \left(\mathbb E_X\left[ \sqrt{K_n \ast \mu_{\rho}(X)}\right]+\mathbb E_Y\left[\sqrt{K_n \ast \mu_{\rho}(Y)} \right]\right) }\\
\\
& \displaystyle{+\frac{1}{n\epsilon^2} +\Esp_{X,Y}\left[ \sqrt{K_n \ast \mu_{\rho}(x)}\sqrt{K_n \ast \mu_{\rho}(y)}\mathds{1}_{d(x,y)\leq 2 \epsilon} \right].}
\end{array}
\]
By Cauchy--Schwarz inequality, we have $\mathbb E_X\left[ \sqrt{K_n \ast \mu_{\rho}(X)}\right]^2 \leq \mathbb E_X[ K_n \ast \mu_{\rho}(X)] $. Since $\|K_n\|_1=1$ and $\mu_{\rho}$ is a probability measure, by Fubini we have
\[
\mathbb E_X[ K_n \ast \mu_{\rho}(X)] = \frac{1}{2\pi} \int_0^{2\pi}\int_0^{2\pi} K_n(x-u) \mu_{\rho}(du) dx = \int_0^{2\pi} \left(\frac{1}{2\pi}\int_0^{2\pi} K_n(x-u) dx\right) \mu_{\rho}(du)  =1.
\] 
By Cauchy--Schwarz again, we get
\[
\mathbb E_{X,Y} \left[  \sqrt{K_n \ast \mu_{\rho}(X)} \sqrt{K_n \ast \mu_{\rho}(Y)} \mathds{1}_{d(X,Y) | \leq 2 \varepsilon}\right]  \leq \sqrt{\mathbb E_{X,Y} \left[ K_n \ast \mu_{\rho}(Y) \mathds{1}_{d(X,Y) \leq 2\varepsilon}\right] }
\] 
and again using Fubini inversion (integrating first in $X$ and then in $Y$)
\[
\mathbb E_{X,Y} \left[ K_n \ast \mu_{\rho}(Y) \mathds{1}_{d(X,Y) | \leq 2\varepsilon}\right]  =4\varepsilon. 
\]
\end{proof}

\subsection{A unidimensional Central Limit Theorem}\label{sec.un.SZ}

We are now in position to give the proof of Theorem \ref{prop.conv.fc} stated in the introduction. We follow the main global strategy as the original proof of the Central Limit Theorem by Salem--Zygmund in \cite{SZ54}, i.e. we first establish a $L^2$ estimate and then conclude by a Borel--Cantelli type argument. Recall that, as defined just before Lemma \ref{lm.cov}, 
the probability $\mathbb P$ and the expectation $\mathbb E$ are associated with the random coefficients, whereas $\mathbb P_X$, $\mathbb E_X$, $\mathbb P_{X,Y}$ and $\mathbb E_{X,Y}$ are associated with the random evaluation points $X$ and $Y$, that are uniformly distributed in $[0, 2\pi]$, independent and independent of the coefficients.
Since $\mathbb E[f_n^2(X)]={2 \pi}\,K_n\ast \mu_{\rho}(X) $, if $Y$ is an independent copy of $X$, Fubini inversion and direct calculation yield
\[
\begin{array}{ll}
\displaystyle{\Delta_n} & :=\displaystyle{\mathbb E \left[ \left|\, \mathbb E_X \left[ e^{i t f_n(X)}  \right] - \mathbb E_X \left[ e^{- \frac{t^2}{2} {2 \pi}\, K_n\ast \mu_{\rho}(X)}  \right]\, \right|^2\right]}\\
\\
& = \displaystyle{\mathbb E_{X,Y} \left[   
e^{-\frac{t^2}{2} \mathbb E[(f_n(X)-f_n(Y))^2]} -e^{- \frac{t^2}{2} {2 \pi} \left( K_n\ast \mu_{\rho}(X) +K_n\ast \mu_{\rho}(Y)\right) }
\right].}
\end{array}
\]
Thanks to the fact that $x \mapsto e^{-x}$ is $1-$Lipschitz on $\mathbb R^+$, we deduce that 
\[
\begin{array}{ll}
\displaystyle{\Delta_n} 
& \leq \displaystyle{\frac{t^2}{2} \, \mathbb E_{X,Y} \left[  \left|    \mathbb E[(f_n(X)-f_n(Y))^2] - {2 \pi} \left( K_n\ast \mu_{\rho}(X) +K_n\ast \mu_{\rho}(Y)\right) \right|
\right]}\\
\\
& = \displaystyle{t^2 \, \mathbb E_{X,Y} \left[  \left|    \mathbb E[f_n(X)f_n(Y)]  \right|\right].}
\end{array}
\]
Using Lemma \ref{lm.cov} with $\varepsilon = n^{-1/3}$, we obtain that there exists a universal constant $C>0$ such that
\[
\Delta_n \leq Ct^2n^{-1/6}.
\]
By Borel--Cantelli Lemma, we then deduce that $\Prob$-almost surely, as $n$ goes to infinity, we get
\[
\left| \Esp_X\left[e^{itf_{n^{7}}(X)}-e^{-\frac{t^2}{2} {2 \pi}\, K_{n^7} \ast \mu_{\rho}(X)}\right]\right| \to 0.
\]
Now, let $m$ be a positive integer, there exists a unique $n$ such that  $n^7 < m \leq (n+1)^7$. Using Birkhoff-Khinchine Theorem, we have then the following Lemma whose proof is given in Section \ref{Gaussian-ergo} of the Appendix.

\begin{lma}\label{lem.birk}
As $m$ goes to infinity
\[
\left| \, \mathbb E_X \left[ e^{i t f_{n^7}(X)} \right]-\mathbb E_X \left[ e^{i t f_{m}(X)} \right] \right|= O\left(\frac{1}{m^{1/14}} \right).
\]
\end{lma}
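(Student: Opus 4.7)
The plan proceeds via a reduction to an $L^2$ variance estimate, exploiting the Lipschitz bound $|e^{iu} - e^{iv}| \le |u-v|$ and Cauchy--Schwarz. This gives
\[
\left| \mathbb E_X \left[ e^{i t f_{n^7}(X)} \right]-\mathbb E_X \left[ e^{i t f_{m}(X)} \right] \right| \le |t| \sqrt{\mathbb E_X\left[(f_m(X) - f_{n^7}(X))^2\right]},
\]
so it suffices to show that $\mathbb{E}_X[(f_m - f_{n^7})^2] = O(m^{-1/7})$ almost surely. Writing $f_N = S_N/\sqrt N$ with $S_N(x) := \sum_{k=1}^N \bigl(a_k\cos(kx)+b_k\sin(kx)\bigr)$, and noting that $S_{n^7}$ and $S_m - S_{n^7}$ have disjoint Fourier spectra, Parseval's identity yields the explicit decomposition
\[
\mathbb{E}_X\bigl[(f_m - f_{n^7})^2\bigr] = \frac{(\sqrt m - \sqrt{n^7})^2}{2 m n^7} \, T_{n^7} + \frac{1}{2m}\bigl(T_m - T_{n^7}\bigr),\qquad T_N := \sum_{k=1}^N (a_k^2 + b_k^2).
\]

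Next, I would use the constraint $n^7 < m \le (n+1)^7$ to obtain the elementary estimates $m - n^7 = O(n^6) = O(m^{6/7})$ and $(\sqrt m - \sqrt{n^7})^2 = O(n^5)$. The Birkhoff--Khinchine ergodic theorem applied to the stationary sequence $(a_k^2 + b_k^2)_{k \ge 1}$ ensures that $T_N/N$ converges a.s.\ to an integrable random variable, so in particular $T_{n^7} = O(n^7) = O(m)$ almost surely; this makes the first term $O(m^{5/7} \cdot m/(m \cdot m)) = O(m^{-2/7})$. The second, more delicate term requires the sharper a.s.\ rate $T_m - T_{n^7} = O(m - n^7) = O(m^{6/7})$, from which it is $O(m^{-1/7})$. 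Combining, $\mathbb{E}_X[(f_m - f_{n^7})^2] = O(m^{-1/7})$ a.s., yielding the claimed $O(m^{-1/14})$ upon taking the square root.

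The main obstacle is obtaining the sharp a.s.\ rate $T_m - T_{n^7} = O(m - n^7)$ on the edge block of length $m - n^7 \ll m$: Birkhoff's theorem alone only furnishes $T_N/N \to \xi$ a.s.\ without any rate, which is insufficient to control an increment of size $o(m)$. I would handle this by exploiting the Gaussianity of $(a_k)$ and $(b_k)$, either through a Hanson--Wright type concentration inequality for the Gaussian quadratic form $\sum_{k=n^7+1}^m a_k^2$, or through a direct fourth-moment computation based on the Isserlis identity $\mathrm{Cov}(a_k^2, a_\ell^2) = 2\,\rho(k-\ell)^2$, and then transferring convergence in probability to almost sure convergence via a Borel--Cantelli argument along the sparse subsequence indexed by the blocks $\{n^7+1,\ldots,(n+1)^7\}$.
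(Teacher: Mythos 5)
Your Cauchy--Schwarz reduction and orthogonal decomposition into a scaling piece plus a tail piece coincide with the paper's decomposition (the paper uses $(a+b)^2\le 2a^2+2b^2$ where you invoke exact $L^2_X$-orthogonality, a cosmetic difference), and the treatment of the scaling term via Birkhoff--Khinchine is also the same, giving $O(m^{-2/7})$ in both cases. You have moreover put your finger on the genuinely delicate point: Birkhoff--Khinchine gives $T_N/N\to c$ a.s.\ with no rate, and an increment over a block of length $\ell_n := m-n^7 = O(n^6)=o(n^7)$ is not controlled by it directly. The two arguments diverge in how this is resolved. The paper sidesteps the increment estimate by a telescoping identity of the form $\frac{T_m-T_{n^7}}{2m}=\frac{T_m}{2m}-\frac{n^7}{m}\cdot\frac{T_{n^7}}{2n^7}$: both Birkhoff averages converge a.s.\ to the same limit while $n^7/m\to 1$, so the tail piece is $o(1)$. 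This requires no hypotheses on $\mu_\rho$ but is \emph{rate-free}; it actually only yields $o(1)$ rather than the $O(m^{-1/14})$ stated in the lemma (the paper's further expansion of its remainder $R_n$ appears to discard most of the tail block, so no rate is truly extracted), and $o(1)$ is indeed all that the downstream theorems use. Your concentration route is heavier but quantitative, with one caveat worth flagging: without hypotheses on $\mu_\rho$ the operator norm of the block covariance matrix can be of the same order as its trace (take $\rho\equiv 1$), so a Hanson--Wright bound at scale $t=C\ell_n$ gives a tail probability that does not decay in $n$; one must instead take $t\sim\ell_n\log n$ to make the tails summable over the $\sim n$ blocks, which yields $T_m-T_{n^7}=O(\ell_n\log n)$ a.s.\ and hence a bound of order $\sqrt{\log m}/m^{1/14}$ for the lemma --- close to, but not exactly, the stated rate. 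For the same reason the fourth-moment route alone (with $\mathrm{Var}$ possibly of order $\ell_n^2$) gives only $O(1/t^2)$ or $O(1/t^4)$ tails at scale $t\ell_n$, which are not summable over $n$ for fixed $t$; you do need the exponential concentration. Either route suffices for the use that is made of this lemma.
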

\noindent
Combining this last Lemma \ref{lem.birk} and Lemma \ref{lm.fej.leb}, we deduce that as $m$ and hence $n$ both go to infinity, then $\Prob-$almost surely
\[
\left|\Esp_X\left[e^{-\frac{t^2}{2} {2 \pi}\, K_{n^7}\ast \mu_{\rho}(X)}-e^{-\frac{t^2}{2} {2 \pi}\, K_m\ast \mu_{\rho}(X)}\right]\right| \to 0.
\]
Therefore, by triangular inequality, we obtain that as $m$ goes to infinity, $\Prob-$almost surely,
\[
\left|\Esp_X\left[e^{itf_m(X)}-e^{-\frac{t^2}{2} {2 \pi}\, K_m\ast \mu_{\rho}(X)}\right]\right|  \to 0. \]
By dominated convergence theorem, using Lemma \ref{lm.fej.leb} again, we finally obtain that $\Prob-$almost surely
\[
\lim_{m \to +\infty} \Esp_X\left[e^{itf_m(X)}\right]= \frac{1}{2\pi} \int_0^{2\pi}e^{-\frac{t^2}{2}{2 \pi}\, \psi_{\rho}(x)}dx= \Esp_{X,N}\left[e^{it\sqrt{{2 \pi}\, \psi_{\rho}(X)}N}\right].\]

\subsection{A functional Central Limit Theorem}\label{sec.SZ.func}
In this section, we give a detailed proof of Theorem \ref{thm.SZ.func} stated in the introduction. Let $g_{n}$ the stochastic process defined on $[0,2\pi]$ by
\[
g_n(t):=f_n\left(X+\frac{t}{n}\right), \;\; t \in [0, 2\pi],
\]
then Theorem \ref{thm.SZ.func} precisely asserts that, $\mathbb P-$almost surely, $(g_n(t))_{t \in [0, 2\pi]}$ converges in distribution under $\mathbb P_X$ towards an explicit Gaussian process in the $\mathcal C^1$ topology. As classically done, in order to establish this statement, we will first prove the convergence of finite dimensional marginals and then we will invoke a tightness argument.

\subsubsection{Convergence of finite dimensional marginals}
Let us first establish the convergence of the finite dimensional marginals of $(g_n(t))_{t \in [0,2\pi]}$.

\begin{prop}\label{pro.fin.marg}$\Prob-$almost surely, as $n$ goes to infinity, the finite marginals of the localized process  $(g_n(t))_{t\in[0,2\pi]}$ converge to the ones of a process $(g_{\infty}(t))_{t\in [0,2\pi]}$ of the form $\sqrt{\psi_{\rho}(X)}N$, where the process $N=(N_t)_{t\in [0,2\pi]}$ is the stationary Gaussian process with $\sin_c$ as covariance function and independent of $X$. 
\end{prop}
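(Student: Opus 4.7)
The plan is to reproduce the strategy of Theorem \ref{prop.conv.fc}, now applied to linear combinations of finite marginals of $g_n$. By the Cramér--Wold device, it suffices to prove that for any $t_1,\dots,t_p\in[0,2\pi]$ and $\lambda_1,\dots,\lambda_p\in\R$, the random variable
\[
L_n \,:=\, \sum_{j=1}^p \lambda_j\, g_n(t_j) \,=\, \sum_{j=1}^p \lambda_j\, f_n\!\left(X+\tfrac{t_j}{n}\right)
\]
converges in distribution under $\Prob_X$, $\Prob$--almost surely, towards the corresponding linear combination of the marginals of the limiting process, which is, conditionally on $X$, a centered Gaussian of variance $2\pi\psi_\rho(X)\Sigma(\lambda)$, where $\Sigma(\lambda) := \sum_{j,k}\lambda_j\lambda_k\,\mathrm{sinc}(t_j-t_k)$. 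The problem thus reduces to establishing that, $\Prob$--almost surely, for every $\tau\in\R$,
\[
\phi_n(\tau) := \Esp_X\!\left[e^{i\tau L_n}\right] \xrightarrow[n\to\infty]{} \Esp_X\!\left[e^{-\frac{\tau^2}{2}\cdot 2\pi\psi_\rho(X)\Sigma(\lambda)}\right].
\]

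As in Section \ref{sec.un.SZ}, since $L_n$ given $X$ is centered Gaussian with variance
\[
V_n(X) \,:=\, \sum_{j,k}\lambda_j\lambda_k\,\Esp[f_n(X+t_j/n)\,f_n(X+t_k/n)],
\]
one has $\Esp[\phi_n(\tau)] = A_n(\tau) := \Esp_X[e^{-\frac{\tau^2}{2}V_n(X)}]$, and for $Y$ an independent copy of $X$,
\[
\Esp\!\left[\big|\phi_n(\tau)-A_n(\tau)\big|^2\right] = \Esp_{X,Y}\!\left[e^{-\frac{\tau^2}{2}(V_n(X)+V_n(Y))}\bigl(e^{\tau^2 W_n(X,Y)}-1\bigr)\right]
\]
with $W_n(X,Y):=\sum_{j,k}\lambda_j\lambda_k\,\Esp[f_n(X+t_j/n)\,f_n(Y+t_k/n)]$. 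The Cauchy--Schwarz inequality yields $|W_n(X,Y)|\le\tfrac12(V_n(X)+V_n(Y))$, and the above is consequently controlled by $\tau^2\,\Esp_{X,Y}[|W_n(X,Y)|]$. Each term of $W_n$ is of the form $\Esp[f_n(X')f_n(Y')]$ with $X',Y'$ uniform on $[0,2\pi]$, so Lemma \ref{lm.cov} applied termwise gives $\Esp_{X,Y}[|W_n|]=O(n^{-1/6})$. A Borel--Cantelli argument along the sparse subsequence $(n^7)$, together with an interpolation à la Lemma \ref{lem.birk} to handle arbitrary integers, then ensures that $\phi_m(\tau)-A_m(\tau)\to 0$ almost surely.

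The remaining task is to identify the deterministic limit of $A_n(\tau)$, i.e. to prove that $V_n(X)\to 2\pi\psi_\rho(X)\Sigma(\lambda)$ for Lebesgue-almost every $X$. By Lemma \ref{lem.twopoints},
\[
\Esp[f_n(X+t/n)f_n(X+s/n)] = \cos\!\left(\tfrac{n+1}{2n}(t-s)\right)\!\int_0^{2\pi}\! K_n\!\left(X+\tfrac{t}{n}-u,\,X+\tfrac{s}{n}-u\right)\mu_\rho(du).
\]
The cosine prefactor tends to $\cos((t-s)/2)$. For the integral, the change of variables $u=X+\theta/n$ together with the scaling of the bivariate Fejér kernel $K_n$ given by \eqref{eq.bivariate.fejer}, combined with a Fejér--Lebesgue analysis in the spirit of Lemma \ref{lm.fej.leb}, yields at every Lebesgue point of $\psi_\rho$ the limit $\psi_\rho(X)\int_\R \tfrac{4\sin((t-\theta)/2)\sin((s-\theta)/2)}{(t-\theta)(s-\theta)}\,d\theta$, the contribution of any singular component of $\mu_\rho$ being negligible at generic $X$. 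Recognising this integral, via Parseval applied to $\mathds{1}_{[-1/2,1/2]}$, as $4\pi\,\tfrac{\sin((t-s)/2)}{t-s}$, and using $2\sin(x)\cos(x)=\sin(2x)$ to combine with the cosine prefactor, one recovers precisely $2\pi\psi_\rho(X)\,\mathrm{sinc}(t-s)$. Dominated convergence in $X$ then concludes.

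The main obstacle lies in this final step: justifying the passage to the limit inside the rescaled integral, and ruling out any contribution from the singular part of $\mu_\rho$, requires a bivariate analogue of the Fejér--Lebesgue Lemma adapted to the kernel $K_n(a,b)$ with $a,b$ scaled by $1/n$. This can be carried out by combining the pointwise bounds on $K_n$ from Lemma \ref{lm.prop.L}, extended to the two-variable kernel \eqref{eq.bivariate.fejer}, with the Lebesgue differentiation theorem, following the template of the proof of Lemma \ref{lm.fej.leb}.
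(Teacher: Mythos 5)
Your global strategy matches the paper: Cram\'er--Wold reduction, an $L^2$ estimate of $|\phi_n - A_n|^2$ via Cauchy--Schwarz controlled by $\Esp_{X,Y}|W_n|$, Borel--Cantelli along $(n^7)$, interpolation \`a la Lemma \ref{lem.birk}, and finally identification of the Gaussian mixture limit. Your handling of $\Esp_{X,Y}|W_n|$ is in fact a clean observation: since $X+t_j/n$ and $Y+t_k/n$ are, modulo $2\pi$, again independent and uniform, Lemma \ref{lm.cov} applies directly termwise, giving $O(n^{-1/6})$ without re-deriving the estimate for a new kernel. The paper re-runs essentially the same bound on the polarized kernel; your shortcut is legitimate and slightly lighter.

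The gap is in the identification of $\lim_n V_n(X)$. You propose to pass to the limit in each two-point covariance $\Esp[f_n(X+t/n)f_n(X+s/n)]$ via the rescaled bivariate kernel $K_n((t+\theta)/n,(s+\theta)/n)$ and then sum; you correctly compute the candidate limit (the Parseval identity giving $4\pi\,\mathrm{sinc}((t-s)/2)$ and the product with the cosine prefactor giving $2\pi\,\mathrm{sinc}(t-s)$ are right), and you correctly flag that this requires a ``bivariate Fej\'er--Lebesgue lemma.'' But you then assert this ``can be carried out following the template of Lemma \ref{lm.fej.leb},'' which is not true as stated. That template --- and the whole good-kernel machinery invoked through Theorem 8.4 of Zygmund --- relies on the kernel being nonnegative (or, in the $K_n'$ case, odd with a special representation in terms of $\varphi_x$). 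The bivariate kernel $K_n(a,b)=\tfrac{1}{n}\tfrac{\sin(na/2)\sin(nb/2)}{\sin(a/2)\sin(b/2)}$ changes sign, and neither the concentration-of-mass argument nor the claim that the singular part contributes nothing at a generic $X$ goes through automatically; you would need to build and prove this lemma from scratch. The paper sidesteps the problem entirely with its key device (Lemma \ref{lem.rep.Z}): because $\Esp[Z_n(X,t,\lambda)^2]$ is a variance, it can be written as a convolution with a single \emph{nonnegative} polarized Fej\'er kernel $K_n^{t,\lambda}$, whose normalized version $\bar{K}_n^{t,\lambda}$ is then shown to be a genuine good kernel (Lemma \ref{lem.trig.kernel}), so the one-dimensional Fej\'er--Lebesgue argument applies verbatim (Lemma \ref{lm.fej.leb.multi}); the $\mathrm{sinc}$ covariance is then read off the normalization constant $\int_0^{2\pi}K_n^{t,\lambda}$ (Lemma \ref{lm.fej.leb.multibis}). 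Summing over $(j,k)$ \emph{before} taking the limit, rather than after, is exactly what restores positivity and is the idea your proof is missing.
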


We fix a positive integer $M$, and two $M-$uplets $t=(t_1, \ldots, t_M) \in [0, 2\pi]^M$ and $\lambda=(\lambda_1, \ldots, \lambda_M)\in \mathbb R^M$ and we set
\[
Z_n(X,t,\lambda) := \sum_{p=1}^M \lambda_p g_n(t_p) = \sum_{p=1}^M \lambda_p f_n\left(X+\frac{ t_p}{n}\right).
\] 
Proving Proposition \ref{pro.fin.marg} then amounts to show that $\mathbb P-$almost surely, we have 
\[
\lim_{n \to +\infty} \mathbb E_X\left[e^{ i Z_n(X,t,\lambda)}\right] =\mathbb E_X\left[e^{ -\frac{1}{2} \times {2 \pi}\sum_{p,q=1}^M \lambda_p \lambda_q \psi_{\rho}(X) \sin_c(t_p-t_q)} \right].
\]

The proof follows globally the same lines as its one dimensional analogue Theorem \ref{prop.conv.fc}. First the variance of $Z_n(X,t,\lambda)$ under $\mathbb P$ can be represented by a convolution with a Fej\'er-like kernel. 

\begin{lma}\label{lem.rep.Z}
We have the representation
\[
\mathbb E\left[Z_n(X,t,\lambda)^2\right] = {2\pi}\, K_n^{t,\lambda} \ast \mu_{\rho}(X) = \left( \int_0^{2\pi} K_n^{t,\lambda}(x)dx\right) \bar{K}_n^{t,\lambda}\ast \mu_{\rho}(X),
\]
with 
\[
K_n^{t,\lambda}(x):=\frac{1}{n}\left| \sum_{p=1}^{M}\lambda_p e^{i\frac{(n+1)}{2n}t_p} \frac{\sin\left(\frac{n}{2}(x+\frac{t_p}{n})\right)}{\sin\left(\frac{x+\frac{t_p}{n}}{2}\right)}\right|^2, \quad 
\bar{K}_n^{t,\lambda}(x):=\frac{{2\pi} \, K_n^{t,\lambda}(x)}{\int_0^{2\pi}K_n^{t,\lambda}(x)dx}.
\]
\end{lma}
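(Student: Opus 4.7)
The plan is to reduce the computation to a direct application of the two-point correlation identity of Lemma~\ref{lem.twopoints}, followed by an elementary modulus-squared rewriting.

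First, treating $X$ as a parameter under $\Prob$, I would expand
\[
\Esp\!\left[Z_n(X,t,\lambda)^2\right] = \sum_{p,q=1}^M \lambda_p\lambda_q\,\Esp\!\left[f_n\!\left(X+\tfrac{t_p}{n}\right)f_n\!\left(X+\tfrac{t_q}{n}\right)\right],
\]
and apply Lemma~\ref{lem.twopoints} to each pairwise covariance with $x=X+t_p/n$ and $y=X+t_q/n$. Since $\tfrac{n+1}{2}(x-y)=\tfrac{n+1}{2n}(t_p-t_q)$ and since the bivariate Fej\'er kernel \eqref{eq.bivariate.fejer} factorises as $K_n(x-u,y-u) = \tfrac1n A_p(X-u)A_q(X-u)$ with
\[
A_p(w) := \frac{\sin\!\bigl(\tfrac{n}{2}(w+t_p/n)\bigr)}{\sin\!\bigl(\tfrac{1}{2}(w+t_p/n)\bigr)} \in \mathbb{R},
\]
exchanging the finite double sum with the integration against $\mu_\rho$ yields
\[
\Esp\!\left[Z_n^2\right] = \int_0^{2\pi} \frac{1}{n}\sum_{p,q=1}^M \lambda_p\lambda_q\, A_p(X-u)A_q(X-u)\cos\!\left(\tfrac{n+1}{2n}(t_p-t_q)\right) \mu_\rho(du).
\]

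The second step is the algebraic identity: for real numbers $(A_p)$ and real phase $\alpha$,
\[
\sum_{p,q=1}^M \lambda_p\lambda_q A_p A_q \cos\!\bigl(\alpha(t_p-t_q)\bigr) = \Bigl|\sum_{p=1}^M \lambda_p A_p e^{i\alpha t_p}\Bigr|^2,
\]
obtained by expanding $\cos(\alpha(t_p-t_q))=\cos(\alpha t_p)\cos(\alpha t_q)+\sin(\alpha t_p)\sin(\alpha t_q)$ and completing the square. Applying this pointwise in $u$ with $\alpha = \tfrac{n+1}{2n}$ directly produces
\[
\Esp\!\left[Z_n^2\right] = \int_0^{2\pi} K_n^{t,\lambda}(X-u)\,\mu_\rho(du) = 2\pi\, K_n^{t,\lambda}*\mu_\rho(X),
\]
in view of the $\tfrac{1}{2\pi}$ normalisation convention adopted in the paper's convolution. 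The second equality in the statement is then a trivial rescaling: by the definition $\bar K_n^{t,\lambda}:= 2\pi K_n^{t,\lambda}/\!\int_0^{2\pi} K_n^{t,\lambda}(x)dx$, we have $\bigl(\int_0^{2\pi} K_n^{t,\lambda}(x)dx\bigr)\,\bar K_n^{t,\lambda}*\mu_\rho(X) = 2\pi\, K_n^{t,\lambda}*\mu_\rho(X)$.

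There is no genuine obstacle here; the only point requiring care is keeping the conventions straight, namely the $\tfrac{1}{2\pi}$ factor in the convolution, the phase $e^{i\frac{n+1}{2n}t_p}$ which records precisely the off-diagonal cosine in Lemma~\ref{lem.twopoints}, and the fact that the $A_p$'s are real so that the double sum really is a modulus squared. The renormalisation into $\bar K_n^{t,\lambda}$ is introduced so that $\bar K_n^{t,\lambda}/(2\pi)$ is a probability density on $[0,2\pi]$, which presumably allows the transfer of standard regularising-kernel arguments (in the spirit of Lemma~\ref{lm.fej.leb}) to this functional setting in the subsequent finite-marginal and tightness arguments.
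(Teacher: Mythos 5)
Your proof is correct and follows essentially the same route as the paper: expand the quadratic form, express each pairwise covariance through the spectral measure, and complete the square to obtain a modulus-squared kernel. The only stylistic difference is that you invoke Lemma~\ref{lem.twopoints} (the real cosine form) and then reassemble the modulus squared from the cosine-weighted double sum, whereas the paper keeps the covariance in complex-exponential form throughout and reads off $\sum_{p,q}\lambda_p\lambda_q z_p\overline{z_q}=\bigl|\sum_p\lambda_p z_p\bigr|^2$ directly, taking the real part only implicitly at the end; both are sound and amount to the same completing-the-square observation.
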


\begin{proof}[Proof of Lemma \ref{lem.rep.Z}]By linearity, we have
\begin{equation} \label{eq.RnX}
\mathbb E\left[Z_n(X,t,\lambda)^2\right]=\sum_{p,q=1}^{M} \lambda_p \lambda_q \mathbb E\left[f_n\left(X+\frac{t_p}{n} \right)f_n\left(X+\frac{t_q}{n} \right)\right]
\end{equation}
and as in the proof of Lemma \ref{lem.twopoints} we have
\[
\begin{array}{ll}
\displaystyle{\mathbb E\left[f_n\left(X+\frac{t_i}{n} \right)f_n\left(X+\frac{t_j}{n} \right)\right]}
& = \displaystyle{\Re \left(  \frac{1}{n} \sum_{k,l=1}^n \rho(k-l) e^{ i k (X+\frac{t_p}{n}) - i l (X+\frac{t_q}{n})} \right)}. 
\end{array}
\]
Writing for all integer $k$ that $\rho(k)=\int_0^{2\pi} e^{-i kx} \mu_{\rho}(dx)$, we obtain 
\[
\begin{array}{ll}
&\displaystyle{\frac{1}{n} \sum_{k,l=1}^n \rho(k-l) e^{ i k (X+\frac{t_p}{n}) - i l (X+\frac{t_q}{n})}} 
 =\displaystyle{ e^{i\frac{t_p-t_q}{n}}  \int_0^{2\pi}   \left( \frac{1}{n} \sum_{k,l=0}^{n-1}  e^{i k(X+\frac{t_p}{n}-x)} e^{-i \,l (X+\frac{t_q}{n}-x)} \right) \mu_{\rho}(dx)}\\
\\\\
 & =\displaystyle{e^{i\frac{t_p-t_q}{n}}  \int_0^{2\pi}   \left( \frac{1}{n} \frac{1-e^{i n (X+\frac{t_p}{n}-x)}}{1-e^{i(X+\frac{t_q}{n}-x)}} \frac{1-e^{-i \,n (X+\frac{t_p}{n}-x)}}{1-e^{-i (X+\frac{t_q}{n}-x)}} \right) \mu_{\rho}(dx)}
\\\\
 & =\displaystyle{e^{i\frac{(n+1)}{2n}(t_p-t_q)} \int_0^{2\pi}   \left( \frac{1}{n} \frac{\sin \left( \frac{n (X+\frac{t_p}{n}-x)}{2}\right)}{\sin \left( \frac{(X+\frac{t_p}{n}-x)}{2}\right)} \frac{\sin \left( \frac{n (X+\frac{t_q}{n}-x)}{2}\right)}{\sin \left( \frac{(X+\frac{t_q}{n}-x)}{2}\right)} \right) \mu_{\rho}(dx)}\\
&= \displaystyle{\frac{1}{n} \int_0^{2\pi} \underbrace{\left(  e^{i\frac{(n+1)}{2n}t_p}\frac{\sin \left( \frac{n (X+\frac{t_p}{n}-x)}{2}\right)}{\sin \left( \frac{(X+\frac{t_p}{n}-x)}{2}\right)} \right)}_{:=z_p} \underbrace{e^{-i\frac{(n+1)}{2n}t_q}\left(\frac{\sin \left( \frac{n (X+\frac{tq}{n}-x)}{2}\right)}{\sin \left( \frac{(X+\frac{t_q}{n}-x)}{2}\right)} \right)}_{:=\overline{z_q}} \mu_{\rho}(dx)}.
\end{array}
\]
Summing up on $p$ and $q$, by symmetry we obtain
\[
\begin{array}{ll}
\displaystyle{\sum_{p,q=1}^M \lambda_p \lambda_q \left( \frac{1}{n} \sum_{k,l=1}^n \rho(k-l) e^{ i k (X+\frac{t_p}{n}) - i l (X+\frac{t_q}{n})}\right)} & \displaystyle{=  \frac{1}{n} \int_0^{2\pi}  \left( \sum_{p,q=1}^M \lambda_p z_p \lambda_q \overline{z_q}\right) \mu_{\rho}(dx)}\\
\\
& \displaystyle{=  \frac{1}{n} \int_0^{2\pi}\left| \sum_{p=1}^M \lambda_p z_p \right|^2 \mu_{\rho}(dx)}.
\end{array}
\]
As a result 
\[
\begin{array}{ll}
\mathbb E\left[Z_n(X,t,\lambda)^2\right] & \displaystyle{= \int_0^{2\pi}\frac{1}{n} \left|\sum_{p=1}^M \lambda_p e^{i\frac{(n+1)}{2n}t_p}\frac{\sin \left( \frac{n (X-x+\frac{t_p}{n})}{2}\right)}{\sin \left( \frac{(X-x+\frac{t_p}{n})}{2}\right)}  \right|^2 \mu_{\rho}(dx)}.
\end{array}
\]
With the definitions of $K_n^{t,\lambda}$ and $\bar{K}_n^{t,\lambda}$ above, it gives
\[
\mathbb E\left[Z_n(X,t,\lambda)^2\right]= {2\pi}\, K_n^{t,\lambda} \ast \mu_{\rho}(X)=\left(\int_0^{2\pi} K_n^{t,\lambda}(x) dx \right)\times \bar{K}_n^{t,\lambda} \ast \mu_{\rho}(X).
\]
\end{proof}
As the standard Fej\'er kernel, the normalized kernel $\bar{K}_n^{t,\lambda}$ is a good trigonometric kernel and we have the  corresponding Fej\'er--Lebesgue Theorem whose proof is given in Section \ref{sec.trigo.app} of the Appendix. 
\begin{lma} \label{lm.fej.leb.multi}
As $n$ goes to infinity, for almost every $x\in[0,2\pi]$, we have 
\[
\bar{K}_n^{t,\lambda} \ast \mu_{\rho}(x) \rightarrow \psi_{\rho}(x).
\]
\end{lma}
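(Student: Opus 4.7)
The strategy is to recognize $\bar K_n^{t,\lambda}$ as a positive $L^1$-normalized approximate identity of Fej\'er type, so that the classical Fej\'er--Lebesgue argument already used in the proof of Lemma~\ref{lm.fej.leb} applies. Three ingredients are needed: (i) positivity, which is immediate since $K_n^{t,\lambda}\ge 0$; (ii) $L^1$-normalization, which holds by construction; (iii) concentration of mass at $0$ together with a pointwise domination by an integrable Fej\'er-like majorant, in order to handle the singular part of $\mu_\rho$.

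Setting $D_n(y):=\sin(ny/2)/\sin(y/2)$, the orthogonality relation $\int_0^{2\pi}D_n(v)D_n(v-h)\,dv = 2\pi D_n(h)$ (which follows from the expansion $D_n(y)=e^{-i(n-1)y/2}\sum_{k=0}^{n-1}e^{iky}$) gives, after shifting the variable of integration,
\[
\int_0^{2\pi}K_n^{t,\lambda}(x)\,dx \;=\; \frac{2\pi}{n}\sum_{p,q=1}^M\lambda_p\lambda_q\, e^{i\frac{n+1}{2n}(t_p-t_q)}\,D_n\!\left(\frac{t_p-t_q}{n}\right).
\]
Since $\tfrac{1}{n}D_n(h/n)=\tfrac{\sin(h/2)}{n\sin(h/(2n))}\to \tfrac{2\sin(h/2)}{h}$, a short trigonometric simplification yields
\[
\lim_{n\to\infty}\frac{1}{2\pi}\int_0^{2\pi}K_n^{t,\lambda}(x)\,dx \;=\; \sum_{p,q=1}^M\lambda_p\lambda_q\, \sin_c(t_p-t_q),
\]
which is strictly positive provided $\lambda\neq 0$ and the $t_p$ are distinct, since $\sin_c$ is the Fourier transform of $\tfrac{1}{2}\mathds{1}_{[-1,1]}$ and is thus strictly positive-definite in the sense of Bochner.

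For the concentration property and the majorant, the elementary bound $|D_n(y)|\le \min(n,\pi/|y|)$ together with Cauchy--Schwarz in $p$ gives
\[
K_n^{t,\lambda}(x) \;\le\; \frac{M}{n}\sum_{p=1}^M \lambda_p^2\, D_n(x+t_p/n)^2 \;=\; C \sum_{p=1}^M K_n(x+t_p/n),
\]
that is, a finite sum of shifted Fej\'er kernels. For any fixed $\delta>0$ and $n$ large enough so that $\max_p |t_p|/n < \delta/2$, the right-hand side is $O(1/n)$ uniformly on $\{|x|\ge \delta\}$, so that $\int_{|x|\ge\delta}K_n^{t,\lambda}(x)\,dx \to 0$. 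Combined with the strictly positive limit of the total mass, this establishes the concentration of $\bar K_n^{t,\lambda}$ at $0$, and the same pointwise bound provides the radially decreasing integrable Fej\'er-type majorant.

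Putting these facts together, $\bar K_n^{t,\lambda}$ is a positive, $L^1$-normalized approximate identity, dominated by a finite sum of shifted Fej\'er kernels. The Fej\'er--Lebesgue theorem for finite Borel measures, entirely analogous to the argument recalled in the proof of Lemma~\ref{lm.fej.leb} and ultimately based on Theorem 8.4 of \cite{Zyg03}, then implies that
\[
\bar K_n^{t,\lambda}\ast\mu_\rho(x) \to \psi_\rho(x) \qquad \text{for Lebesgue-a.e.\ } x\in[0,2\pi].
\]
The main technical point is the treatment of the singular part $\mu_\rho^s$: the domination by shifted Fej\'er kernels guarantees that its contribution vanishes at Lebesgue-a.e.\ $x$, since this is precisely the classical fact for each individual (shifted) Fej\'er kernel, and nontangentiality of the shifts $t_p/n=O(1/n)$ relative to the concentration scale $1/n$ is harmless at Lebesgue points.
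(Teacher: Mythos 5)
Your proposal is correct and takes essentially the same approach as the paper's proof: you dominate $K_n^{t,\lambda}$ by a finite sum of shifted Fej\'er kernels (via Cauchy--Schwarz, where the paper uses the triangle inequality with constant $2^{M-1}$ in its Lemma on $\bar K_n^{t,\lambda}$), and then run the Fej\'er--Lebesgue argument of Lemma \ref{lm.fej.leb}, ultimately resting on Theorem 8.4 of \cite{Zyg03}. You in fact carry out additional explicit work (the orthogonality computation identifying the limit of $\int K_n^{t,\lambda}$, which the paper isolates as a separate lemma).

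The one place where the paper is more careful and you are a bit loose concerns the lack of symmetry of $\bar K_n^{t,\lambda}$. The proof of Lemma \ref{lm.fej.leb} uses the symmetrized difference $\varphi_x(t)=\mu_\rho([0,x+t])-\mu_\rho([0,x-t])-2t\psi_\rho(x)$, which is tailored to even (or odd) kernels; since $\bar K_n^{t,\lambda}$ is neither, the paper replaces it by the one-sided functions $\varphi_x^\pm(t)=\mu_\rho([0,x\pm t])-t\psi_\rho(x)$ and the sets $E^\pm=\{x:\Phi_x^\pm(t)=o(t)\}$ before writing the representation
\[
\bar K_n^{t,\lambda}\ast\mu_\rho(x)-\psi_\rho(x)=\frac{1}{2\pi}\left(\int_0^\pi \bar K_n^{t,\lambda}(u)\,d\varphi_x^-(u)+\int_0^\pi \bar K_n^{t,\lambda}(-u)\,d\varphi_x^+(u)\right),
\]
and then integrates by parts against the decay estimate $\bar K_n^{t,\lambda}(u)\le C\sum_p\frac{1}{n(u+t_p/n)^2}$. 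Your remark that the nontangential shifts $t_p/n=O(1/n)$ are harmless at Lebesgue points is true, but it is asserted rather than proved; in a fully written proof this is exactly the point the one-sided variation argument resolves. This is a cosmetic gap rather than a mathematical one, and the underlying strategy is identical.
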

\noindent
Let us now make explicit the asymptotics of the normalization factor. 

\begin{lma} \label{lm.fej.leb.multibis}
As $n$ goes to infinity, we have 
\[
\int_0^{2\pi} K_n^{t,\lambda}(x) dx =2 \pi \sum_{p, q=1}^m \lambda_p \lambda_q \sin_c\left(t_p-t_q\right) + O\left( \frac{1}{n} \right).
\]
\end{lma}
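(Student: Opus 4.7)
The plan is to compute $\int_0^{2\pi} K_n^{t,\lambda}(x)\,dx$ by expanding the squared modulus into a double sum and reducing each term to the integral of the bivariate Fej\'er kernel \eqref{eq.bivariate.fejer} over a period, then carrying out an explicit Taylor expansion to identify the leading order with quantitative $O(1/n)$ error.

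First, I would expand $\left|\sum_p \lambda_p e^{i\frac{n+1}{2n}t_p} (\cdots)_p\right|^2$ as a double sum over $(p,q)$ and recognize that each summand is
\[
\lambda_p \lambda_q\, e^{i\frac{n+1}{2n}(t_p-t_q)} K_n\!\left(x+\tfrac{t_p}{n},\, x+\tfrac{t_q}{n}\right),
\]
where $K_n(\cdot,\cdot)$ is the bivariate Fej\'er kernel. Next, to compute $\int_0^{2\pi} K_n(x+\tfrac{t_p}{n}, x+\tfrac{t_q}{n})\,dx$, I would translate the variable by $(t_p+t_q)/(2n)$ and set $c_{p,q}:=(t_p-t_q)/(2n)$, reducing the task to evaluating
\[
\frac{1}{n}\int_0^{2\pi} \frac{\sin\!\left(\tfrac{n}{2}(y+c_{p,q})\right)\sin\!\left(\tfrac{n}{2}(y-c_{p,q})\right)}{\sin\!\left(\tfrac{y+c_{p,q}}{2}\right)\sin\!\left(\tfrac{y-c_{p,q}}{2}\right)}\,dy.
\]
I would then use the Dirichlet-type identity $\tfrac{\sin(n\theta/2)}{\sin(\theta/2)}=\sum_{k=0}^{n-1} e^{i(k-(n-1)/2)\theta}$ to expand the integrand as a double sum of exponentials in $y$, and integrate term by term. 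Only the pairs with matching frequency (i.e. $k+l=n-1$) survive, giving
\[
\int_0^{2\pi} K_n\!\left(x+\tfrac{t_p}{n}, x+\tfrac{t_q}{n}\right) dx \;=\; \frac{2\pi}{n}\cdot\frac{\sin(n c_{p,q})}{\sin(c_{p,q})} \;=\; 2\pi\cdot\frac{\sin((t_p-t_q)/2)}{n\sin((t_p-t_q)/(2n))}.
\]

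The final step is asymptotic: since $t_p,t_q$ lie in the bounded interval $[0,2\pi]$, the Taylor expansions $n\sin((t_p-t_q)/(2n)) = (t_p-t_q)/2 + O(1/n^2)$ and $e^{i\frac{n+1}{2n}(t_p-t_q)} = e^{i(t_p-t_q)/2}(1+O(1/n))$ are uniform in $(p,q)$. Combining, each term of the double sum contributes $2\pi\, e^{i(t_p-t_q)/2}\sin_c((t_p-t_q)/2)+O(1/n)$. Since the left-hand side is real, I take real parts and use the elementary identity $2\cos(\theta/2)\sin(\theta/2)=\sin(\theta)$ to collapse $\cos((t_p-t_q)/2)\sin_c((t_p-t_q)/2) = \sin_c(t_p-t_q)$, yielding the announced asymptotics.

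The proof is largely computational; the only real subtlety is the bookkeeping of the $O(1/n)$ term and ensuring uniformity in $(t_p,t_q)$, but since $M$ is fixed and the $t_p$ live in $[0,2\pi]$ this is automatic (the diagonal $p=q$ case gives $c_{p,q}=0$, but $\sin(nc)/\sin(c)\to n$ by continuity so the formula extends without modification, contributing exactly $2\pi \lambda_p^2 \sin_c(0) = 2\pi \lambda_p^2$ as expected). No further ingredient beyond the explicit trigonometric identities is needed.
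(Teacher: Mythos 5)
Your proof is correct and arrives at the same $O(1/n)$ bound, but by a genuinely different route than the paper. You expand the squared modulus into a double sum, identify each cross term with the bivariate Fej\'er kernel of Equation \eqref{eq.bivariate.fejer}, and evaluate the resulting integral in closed form via the Dirichlet-kernel identity, obtaining
\[
\int_0^{2\pi} K_n^{t,\lambda}(x)\,dx = 2\pi \sum_{p,q=1}^M \lambda_p\lambda_q\,e^{i\frac{n+1}{2n}(t_p-t_q)}\,\frac{\sin\left(\tfrac{t_p-t_q}{2}\right)}{n\sin\left(\tfrac{t_p-t_q}{2n}\right)},
\]
which you then Taylor-expand in $1/n$ (your handling of the diagonal $p=q$ terms and of the uniformity over bounded $t_p$ is fine). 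The paper instead notices that $\frac{1}{2\pi}\int_0^{2\pi}K_n^{t,\lambda}$ is exactly the convolution $K_n^{t,\lambda}\ast\mu_\rho(X)$ when $\mu_\rho$ is the uniform measure on $[0,2\pi]$, i.e.\ the independent case $\rho(k)=\delta_0(k)$; specializing the proof of Lemma \ref{lem.rep.Z} to $\rho(k-l)=\delta_{k,l}$ yields immediately the Riemann-sum form $\sum_{p,q}\lambda_p\lambda_q\cdot\frac{1}{n}\sum_{k=1}^n\cos\!\left(\tfrac{k(t_p-t_q)}{n}\right)$, and the $O(1/n)$ error is the standard Riemann-sum rate for a $C^1$ integrand. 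The two closed forms are in fact identical (sum the geometric series in the paper's formula to recover yours), so the difference is purely tactical: the paper's derivation is shorter because it reuses Lemma \ref{lem.rep.Z}, while yours is more self-contained and makes the exact value of the integral explicit before expanding.
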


\begin{proof}[Proof of Lemma \ref{lm.fej.leb.multi}]Note that the integral $ \frac{1}{2\pi}\int_0^{2\pi} K_n^{t,\lambda}(x) dx$ can be seen as the convolution of  the kernel 
$K_n^{t,\lambda}$ with the normalized Lebesgue measure on $[0, 2\pi]$ which is the spectral measure associated with $\rho(k)=\delta_0(k)$ i.e. the independent case. As a result, making $\rho(k-l)=\delta_{k,l}$ in the proof of Lemma \ref{lem.rep.Z}, we get
\[
\frac{1}{2\pi} \int_0^{2\pi} K_n^{t,\lambda}(x) dx = \sum_{p,q=1}^{M}\lambda_p \lambda_q  \left(\frac{1}{n}\sum_{k=1}^{n}\cos \left(\frac{k(t_p-t_q)}{n}\right)\right).
\]
Since the cosine function has a bounded derivative, by standard comparison results between Riemann sums and their limits, we conclude
$$\left| \frac{1}{2\pi} \int_0^{2\pi}K_n^{t,\lambda}(x)dx-\sum_{p, q=1}^M \lambda_p \lambda_q \sin_c\left(t_p-t_q\right) \right|=O\left(\frac{1}{n}\right).$$
\end{proof}
\noindent
Combining Lemmas \ref{lem.rep.Z}, \ref{lm.fej.leb.multi} and \ref{lm.fej.leb.multibis}, we have thus as $n$ goes to infinity
\begin{equation}\label{eq.conv.varZ}
\lim_{n \to +\infty} \mathbb E\left[Z_n(X,t,\lambda)^2\right] = 2\pi \psi_{\rho}(X) \sum_{p, q=1}^M \lambda_p \lambda_q \sin_c\left(t_p-t_q\right).
\end{equation}
Note that in the independent case, since $\psi_{\rho} \equiv 1/2\pi$, we recover the standard $\sin_c$ correlation function. We proceed now as in the proof of Theorem \ref{prop.conv.fc} and establish an $\mathbb L^2$ estimate. 

\begin{lma}As $n$ goes to infinity, we have 
\[
\Delta_n:=\Esp \left| \Esp_X\left[e^{i Z_n(X,t,\lambda)}\right]-e^{-\frac{1}{2}\mathbb E\left[Z_n(X,t,\lambda)^2\right] }\right|^2=O\left(n^{-1/6}\right).
\]
\end{lma}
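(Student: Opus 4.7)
The plan is to mirror the unidimensional argument of Section \ref{sec.un.SZ}: I will exploit Gaussian conditioning on $X$, polarize the modulus squared by introducing an independent copy $Y$ of $X$, and reduce everything to the two-point estimate of Lemma \ref{lm.cov}.

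First, conditionally on $X$, the variable $Z_n(X,t,\lambda)$ is centered Gaussian with variance $V_n(X):={2\pi} \, K_n^{t,\lambda}\ast\mu_\rho(X)$ by Lemma \ref{lem.rep.Z}. Hence the quantity $\mathbb E[Z_n(X,t,\lambda)^2]$ appearing in the statement is to be read as the conditional variance $V_n(X)$, a function of $X$, and by Fubini the deterministic center
\begin{equation*}
B_n:=\Esp_X\!\left[e^{-\frac{1}{2}V_n(X)}\right]
\end{equation*}
equals $\Esp[A_n]$ where $A_n:=\Esp_X[e^{iZ_n(X,t,\lambda)}]$, so that $\Delta_n$ is simply the variance (under $\mathbb P$) of the complex random variable $A_n$.

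Next, introducing an independent copy $Y$ of $X$ (itself independent of the Gaussian coefficients) and expanding $\Esp[|A_n|^2]=\Esp_{X,Y}\Esp\bigl[e^{iZ_n(X,t,\lambda)-iZ_n(Y,t,\lambda)}\bigr]$, I note that conditionally on $(X,Y)$ the difference $Z_n(X,t,\lambda)-Z_n(Y,t,\lambda)$ is centered Gaussian with variance $V_n(X)+V_n(Y)-2C_n(X,Y)$, where $C_n(X,Y):=\Esp[Z_n(X,t,\lambda)Z_n(Y,t,\lambda)\mid X,Y]$. This yields the clean identity
\begin{equation*}
\Delta_n=\Esp_{X,Y}\!\left[e^{-\frac{1}{2}[V_n(X)+V_n(Y)-2C_n(X,Y)]}-e^{-\frac{1}{2}[V_n(X)+V_n(Y)]}\right].
\end{equation*}
Both exponents being nonnegative (the first being a variance) and $x\mapsto e^{-x}$ being $1$-Lipschitz on $\mathbb R^+$, this already bounds $|\Delta_n|$ above by $\Esp_{X,Y}|C_n(X,Y)|$.

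It only remains to bound $\Esp_{X,Y}|C_n(X,Y)|$. Writing
\begin{equation*}
C_n(X,Y)=\sum_{p,q=1}^M\lambda_p\lambda_q\,\Esp\!\left[f_n\!\left(X+\tfrac{t_p}{n}\right)f_n\!\left(Y+\tfrac{t_q}{n}\right)\right],
\end{equation*}
the triangle inequality reduces the task to the $M^2$ individual bounds
\begin{equation*}
\Esp_{X,Y}\!\left|\Esp\!\left[f_n\!\left(X+\tfrac{t_p}{n}\right)f_n\!\left(Y+\tfrac{t_q}{n}\right)\right]\right|=O(n^{-1/6}),
\end{equation*}
each of which follows directly from Lemma \ref{lm.cov} upon observing that, thanks to the $2\pi$-periodicity of $f_n$ and to the rotation invariance of the uniform law on $[0,2\pi]$, the shifted pair $(X+t_p/n,Y+t_q/n)$ is again a pair of independent uniforms, independent of the Gaussian coefficients. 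I do not expect any genuine obstacle here: the only delicate point is precisely this stability of Lemma \ref{lm.cov} under deterministic torus translations, and the summation over the finitely many pairs $(p,q)$ merely absorbs an $(M,\lambda)$-dependent multiplicative constant into the $O(n^{-1/6})$ remainder.
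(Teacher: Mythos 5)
Your proof is correct and follows essentially the same strategy as the paper's: polarize $\Delta_n$ via an independent copy $Y$, use the $1$-Lipschitz property of $e^{-x}$ on $\mathbb{R}^+$ to reduce to $\Esp_{X,Y}|C_n(X,Y)|$, and bound the resulting finite sum of two-point correlations with Lemma \ref{lm.cov}. The one place you tidy things up is the final step: rather than re-running the polarized Fej\'er kernel estimates of Lemmas \ref{lm.polfej}--\ref{lm.cov} for the shifted products $f_n(X+t_p/n)f_n(Y+t_q/n)$ as the paper does, you note that $2\pi$-periodicity of $f_n$ and translation invariance of the uniform law let you apply Lemma \ref{lm.cov} verbatim to $(X+t_p/n,\,Y+t_q/n)$ --- a clean and perfectly rigorous shortcut.
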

\begin{proof}
Exactly as in the proof of Theorem \ref{prop.conv.fc} given in Section \ref{sec.un.SZ}, if $X$ and $Y$ are two independent random variables with uniform distribution in $[0,2\pi]$, independent of the random coefficients $(a_k,b_k)$, we have 
\[
\Delta_n \leq \mathbb E_{X,Y} \left[ \left| \mathbb E\left[Z_n(X,t,\lambda)Z_n(Y,t,\lambda)\right] \right| \right].
\]
Moreover, as in the proof of Lemma \ref{lm.polfej}, if $U$ is an independent variable with law $\mu_{\rho}$, one can write 
\[\begin{array}{l}
\mathbb E\left[Z_n(X,t,\lambda)Z_n(Y,t,\lambda)\right]= \displaystyle{\sum_{p,q=1}^{M}\lambda_p \lambda_q \Esp\left[ f_n\left(X+\frac{t_p}{n}\right)f_n\left(Y+\frac{t_q}{n}\right)\right]}\\
\\
=\displaystyle{\sum_{p,q}\lambda_p \lambda_q \cos \left (\frac{n+1}{2}(X-Y+\frac{t_p-t_q}{n})\right)\Esp_{U} \left[\frac{1}{n}\frac{\sin\left(\frac{n}{2}(X-U+\frac{t_p}{n}) \right)}{\sin\left(\frac{X-U}{2} +\frac{t_p}{2n}\right)}\frac{\sin\left(\frac{n}{2}(Y-U+\frac{t_q}{n}) \right)}{\sin\left(\frac{Y-U}{2} +\frac{t_q}{2n}\right)}  \right]}.
\end{array}
\]
Proceeding as in the proofs of Lemmas \ref{lm.polfej} and \ref{lm.cov}, one then deduces that 
\[
\mathbb E_{X,Y} \left[ \left| \mathbb E\left[Z_n(X,t,\lambda)Z_n(Y,t,\lambda)\right] \right| \right] \leq \sum_{p,q=1}^{M} |\lambda_p \lambda_q | \times O\left(\frac{1}{n\varepsilon^2}+\frac{1}{\sqrt{n}\varepsilon}+\sqrt{\varepsilon}\right).
\]
In particular, choosing $\varepsilon = n^{-1/3}$, we get the desired result.

\end{proof}

As above, to deduce the almost sure asymptotics starting from the $\mathbb L^2$ estimate, we invoke a Borel--Cantelli argument. 
Along the subsequence $n^7$, $\Prob-$almost surely, we have 
\[
\lim_{n \to +\infty} \left| \Esp_X\left[e^{i Z_{n^7}(X,t,\lambda)}\right] - \Esp_X\left[e^{-\frac{1}{2}\mathbb E\left[Z_{n^7}(X,t,\lambda)^2\right]}\right]\right| = 0.
\]
Then Birkhoff--Khinchine Theorem allows to establish the next lemma, which is the multidimensional analogue of Lemma \ref{lem.birk} and whose proof is also given in Section \ref{Gaussian-ergo} of the Appendix. 

\begin{lma}\label{lem.birk2}
As $m$ goes to infinity, if we choose $n$ such that $n^{7}<m\leq (n+1)^{7}$, then
\[
\left|\Esp_X\left[e^{iZ_{n^7}(X,t,\lambda)}\right]- \Esp_X\left[e^{iZ_{m}(X,t,\lambda)}\right]\right|=O\left( \frac{1}{\sqrt{n}} \right) =O\left( \frac{1}{m^{1/14}} \right).
\]
\end{lma}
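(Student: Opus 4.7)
The argument closely parallels the proof of Lemma \ref{lem.birk}, and I would follow the same three-step scheme: Lipschitz reduction, telescoping, and Parseval / $L^2(\Prob_X)$ estimates combined with Birkhoff--Khinchine. Since $u \mapsto e^{iu}$ is $1$-Lipschitz and $Z_n(X,t,\lambda) = \sum_p \lambda_p f_n(X + t_p/n)$ is linear in the evaluations $f_n(X+t_p/n)$, the triangle inequality reduces the estimate to showing that, for each fixed $p \in \{1,\ldots,M\}$,
\[
\Esp_X\bigl|f_m(X + t_p/m) - f_{n^7}(X + t_p/n^7)\bigr| = O(n^{-1/2}) \quad \text{almost surely.}
\]

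I would then disentangle the shift of the evaluation point from the change in the number of frequencies via the telescoping
\[
f_m(X + t_p/m) - f_{n^7}(X + t_p/n^7) = \underbrace{\bigl[f_m(X+t_p/m) - f_m(X+t_p/n^7)\bigr]}_{(A)} + \underbrace{\bigl[(f_m - f_{n^7})(X + t_p/n^7)\bigr]}_{(B)},
\]
and control each piece by its $L^2(\Prob_X)$ norm. Orthogonality of the trigonometric system yields
\[
\Esp_X|f_m(X+s) - f_m(X+s')|^2 = \frac{1}{m}\sum_{k=1}^m (a_k^2 + b_k^2)\bigl(1 - \cos(k(s-s'))\bigr).
\]
With $s = t_p/m$, $s' = t_p/n^7$, so that $|s-s'| = O(1/n^8)$, the elementary bound $1-\cos u \le u^2/2$ combined with $k^2 \le m^2 = O(n^{14})$ and the a.s. boundedness of $\tfrac{1}{m}\sum_{k=1}^m(a_k^2 + b_k^2)$ (Birkhoff--Khinchine, Section \ref{Gaussian-ergo}) gives $\Esp_X|(A)|^2 = O(1/n^2)$, hence $\Esp_X|(A)| = O(1/n)$ almost surely.

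For the second bracket, translation invariance of $\Prob_X$ allows me to evaluate at $X$ directly, and the decomposition
\[
(f_m - f_{n^7})(X) = \Bigl(\frac{\sqrt{n^7}}{\sqrt{m}} - 1\Bigr) f_{n^7}(X) + \frac{\sqrt{m-n^7}}{\sqrt{m}}\, h_{m,n}(X),
\]
where $h_{m,n}$ is the renormalized trigonometric polynomial built from the disjoint frequency block $\{n^7 + 1, \ldots, m\}$, exhibits two $L^2(\Prob_X)$-orthogonal pieces (disjoint spectra). Using $|\sqrt{n^7}/\sqrt{m}-1| = O(1/n)$, the almost sure boundedness of $\Esp_X f_{n^7}^2$, and the fact that $\tfrac{m-n^7}{m}\,\Esp_X h_{m,n}^2 = \tfrac{1}{m}\sum_{k=n^7+1}^{m}(a_k^2+b_k^2)/2$, I would deduce
\[
\Esp_X|(B)|^2 = O(1/n^2) + \frac{1}{m}\sum_{k=n^7+1}^{m}\frac{a_k^2 + b_k^2}{2} = O(1/n) \quad \text{a.s.,}
\]
so that $\Esp_X|(B)| = O(1/\sqrt{n})$. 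This dominates $(A)$ and matches $m^{-1/14}$ since $m \asymp n^7$.

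The hard part---identical to the one already faced in Lemma \ref{lem.birk}---is upgrading the a.s. convergence $\tfrac{1}{N}\sum_{k=1}^N(a_k^2+b_k^2)\to 2$ into a quantitative a.s. control of the block average $\tfrac{1}{m}\sum_{k=n^7+1}^m(a_k^2+b_k^2) = O(1/n)$, given that the block length $m - n^7 = O(n^6)$ may be small compared to $m \sim n^7$. This is handled by the refined ergodic-type estimate for stationary Gaussian sequences proved in Section \ref{Gaussian-ergo} of the Appendix, and the exponent $7$ in the subsequence is calibrated precisely so that this refined estimate produces the $O(1/\sqrt{n}) = O(m^{-1/14})$ rate.
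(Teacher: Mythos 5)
Your proof is correct and takes essentially the same approach as the paper: the paper keeps the sum over $p$ inside the auxiliary functions $\alpha_{k,n},\beta_{k,n}$ and decomposes $Z_{n^7}-Z_m$ directly into three $L^2(\Prob_X)$-orthogonal pieces $U$ (normalization change), $V$ (tail block) and $W$ (argument shift), whereas you first distribute the triangle inequality over $p$ and then telescope into a shift piece $(A)$ and a polynomial-change piece $(B)$ which is then split into normalization and tail. The underlying ingredients --- trigonometric orthogonality under $\Prob_X$, Birkhoff--Khinchine, the $O(1/n)$ size of the argument shift, and the dominant $O(1/\sqrt{n})$ contribution from the block $\{n^7+1,\dots,m\}$ --- are identical, so the two writeups are essentially equivalent.
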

\noindent
Therefore, we obtain that $\mathbb P-$almost surely, as $m$ goes to infinity
\[
\Esp_X\left[e^{i Z_m(X,t,\lambda)}\right]-e^{-\frac{1}{2}\mathbb E\left[Z_m(X,t,\lambda)^2\right] } \xrightarrow[m \to +\infty]{} 0,
\]
and by dominated convergence, using Equation \eqref{eq.conv.varZ}, we can conclude that
\[
\Esp_X\left[e^{i Z_m(X,t,\lambda)}\right]  \xrightarrow[m \to +\infty]{} \Esp_X\left[ \exp\left( - \frac{1}{2} \times {2 \pi}\,\psi_{\rho}(X) \sum_{p, q=1}^M \lambda_p \lambda_q \sin_c\left(t_p-t_q\right)\right)\right].
\]

\subsubsection{Tightness}
The convergence of the finite marginals of $g_n$ now established, in order to conclude that $g_n$ converges in distribution for the $\mathcal{C}^{1}$ topology towards $g_{\infty}$, we need to verify some tightness criteria for the $\mathcal{C}^1-$topology, which is the object of the following proposition. 
\begin{prop}\label{prop.tight}
Almost surely w.r.t. $\Prob$, the family of distributions under $\Prob_X$ of $(g_n(t))_{t \in [0,2\pi]}$ for $n\geq 1$ is tight w.r.t. the $\mathcal{C}^1$ topology on $\mathcal{C}^{1}([0,2\pi])$. 
\end{prop}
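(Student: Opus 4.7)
The plan is to establish tightness in $\mathcal{C}^0([0,2\pi])$ for both $(g_n)_n$ and $(g_n')_n$ under $\mathbb{P}_X$, for $\mathbb{P}$-almost every realization of the coefficients, and to deduce the claim via the continuous embedding $\mathcal{C}^1\hookrightarrow\mathcal{C}^0\times\mathcal{C}^0$ given by $f\mapsto(f,f')$. For this I intend to use the Kolmogorov--Centsov moment criterion applied, conditionally on the coefficients, to the increments of $g_n$ and $g_n'$ viewed as random functions of $X$.

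The first step consists in computing, via Fubini, the uniform distribution of $X$ on $[0,2\pi]$, and Parseval's identity, the second moments under $\mathbb{P}_X$ of the rescaled derivatives $g_n'(u)=\tfrac{1}{n}f_n'(X+u/n)$ and $g_n''(u)=\tfrac{1}{n^2}f_n''(X+u/n)$. Exploiting the $2\pi$-periodicity of $f_n'$ and $f_n''$, one readily obtains
\[
\mathbb{E}_X\!\left[|g_n'(u)|^2\right]=\frac{1}{2n^3}\sum_{k=1}^n k^2(a_k^2+b_k^2),\qquad
\mathbb{E}_X\!\left[|g_n''(u)|^2\right]=\frac{1}{2n^5}\sum_{k=1}^n k^4(a_k^2+b_k^2),
\]
both dominated by the single random variable $S_n(\omega):=\frac{1}{2n}\sum_{k=1}^n (a_k^2+b_k^2)$. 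Plugging this into the integral representations $g_n(t)-g_n(s)=\int_s^t g_n'(u)du$ and $g_n'(t)-g_n'(s)=\int_s^t g_n''(u)du$ and applying Cauchy--Schwarz then yields the uniform moment bounds
\[
\mathbb{E}_X\!\left[|g_n(t)-g_n(s)|^2\right]\leq S_n(\omega)\,|t-s|^2,\qquad
\mathbb{E}_X\!\left[|g_n'(t)-g_n'(s)|^2\right]\leq S_n(\omega)\,|t-s|^2.
\]

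The main obstacle is then to verify that $S(\omega):=\sup_{n\geq 1}S_n(\omega)<+\infty$ holds $\mathbb{P}$-almost surely, since this is what gives moment bounds in $|t-s|^2$ with a constant uniform in $n$. This follows from Birkhoff's ergodic theorem applied to the stationary sequences $(a_k^2)_{k\geq 1}$ and $(b_k^2)_{k\geq 1}$: both having finite mean, the Cesàro averages $\frac{1}{n}\sum_{k=1}^n(a_k^2+b_k^2)$ converge $\mathbb{P}$-almost surely to an integrable limit (possibly random when the Gaussian shift is non-ergodic, e.g.\ when $\mu_\rho$ carries atoms), and therefore remain bounded in $n$. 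With $\omega$ fixed in this set of full $\mathbb{P}$-measure, the Kolmogorov--Centsov criterion (moment exponent $p=2$, gap $\beta=1$) applies and gives the tightness of both $(g_n)_n$ and $(g_n')_n$ in $\mathcal{C}^0([0,2\pi])$ under $\mathbb{P}_X$. Tightness of the one-point marginals $(g_n(0))_n$ and $(g_n'(0))_n$ under $\mathbb{P}_X$ follows either from Proposition \ref{pro.fin.marg} or directly from the moment bounds above via Markov's inequality. Combining these ingredients via the embedding $f\mapsto(f,f')$ yields the desired tightness of $(g_n)_n$ in $\mathcal{C}^1([0,2\pi])$, completing the argument.
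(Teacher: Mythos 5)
Your proposal is correct and follows essentially the same strategy as the paper: bound $\Esp_X|g_n(t)-g_n(s)|^2$ and $\Esp_X|g_n'(t)-g_n'(s)|^2$ by $C(\omega)|t-s|^2$ with $C(\omega)=\sup_{n\ge1}\frac{1}{2n}\sum_{k=1}^n(a_k^2+b_k^2)$ finite $\Prob$-a.s.\ by Birkhoff--Khinchine, then conclude via a Kolmogorov/Lamperti-type tightness criterion applied to $g_n$ and $g_n'$. The only cosmetic difference is that you obtain the increment bounds via the integral representation $g_n(t)-g_n(s)=\int_s^t g_n'(u)\,du$, Cauchy--Schwarz and Parseval (passing through $g_n''$), whereas the paper computes $\Esp_X|g_n(t)-g_n(s)|^2=\frac{2}{n}\sum_k(a_k^2+b_k^2)\sin^2\bigl(\frac{k(t-s)}{2n}\bigr)$ directly by orthogonality of $(\cos(kX),\sin(kX))_k$ in $L^2([0,2\pi])$; both yield the same $C(\omega)|t-s|^2$ bound.
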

\begin{proof}
The strategy of the proof is the same as in Proposition 2 of [AP19], namely it is sufficient  to establish a Lamberti-type criteria for $\Esp_{X} \left|g_n(t)-g_n(s)\right|^2$ and $\Esp_{X}\left|g_n'(t)-g_n'(s)\right|^2$.  For sake of self-containedness, we recall the key elements of the proof.  
As detailed in Section \eqref{Gaussian-ergo}, of the Appendix, Birkhoff--Khinchine Theorem ensures that 
\[
C(\omega):=\sup_{n\geq 1}\frac{1}{2n}\sum_{k=1}^{n}(a_k^2+b_k^2)\]
 is $\Prob-$almost surely bounded. Using orthogonality in $ L^{2}([0,2\pi])$ for cosine and sine functions, we obtain that $\Prob$-almost surely, for all $s,t\in [0,2\pi]$,
\begin{eqnarray*}
\Esp_X\left|g_n(t)-g_n(s)\right|^2 &=&\frac{2}{n}\sum_{k=1}^{n}(a_k^2+b_k^2)\sin^2\left(\frac{k}{2n}(t-s)\right) \leq C(\omega)|t-s|^2,\\
\Esp_X\left|g_n'(t)-g_n'(s)\right|^2 &=&\frac{2}{n}\sum_{k=1}^{n}\frac{k^2}{n^2}(a_k^2+b_k^2)\sin^2\left(\frac{k}{2n}(t-s)\right) \leq C(\omega)|t-s|^2,
\end{eqnarray*}
hence the result.
\end{proof}

\if{
\begin{rmk} Recall that under the log-integrability assumption $\log(\psi_{\rho}) \in L^{1+\eta}([0,2\pi])$ for some $\eta \in (0,1)$, we have $\Prob$-a.s. that $\psi_{\rho}>0$. 
Since the limit process $g_{\infty}$ can be represented as $\sqrt{\psi_{\rho}(X)} N$ with $\Prob$-a.s. $\psi_{\rho}>0, ~X$ a uniform random variable and $N$ a standard Gaussian process that is non-degenerate, the non-degeneracy of $g_{\infty}$ follows.
\end{rmk}
}\fi

\section{From the limit theorems to the nodal asymptotics} \label{sec.nodal}
In this section we shall establish Theorems \ref{thm.asym.mean} and \ref{thm.as.asym} which show, under mild conditions on the spectral measure, the universal asymptotic behavior of the number of zeros, first in expectation and then almost surely. Note that, under the hypotheses of both Theorems, the spectral density $\psi_\rho$ is assumed to be positive almost everywhere. 

\medskip

Let us start by recalling the following deterministic result, the proof of which consists in a simple Fubini argument between the empirical measure of the roots of  a function $f$ and the Lebesgue measure over $[0,2\pi]$. We refer to \cite[Lemma 3]{AP19} for more details.
 
\begin{lma} \label{lm.num.zeros.unif} Let $X$ a random variable which is uniformly distributed over $[0,2\pi]$. Let us assume that $f$ is a $2\pi$-periodic function with a finite number of zeros in a period, then for any $0<h<2\pi$, we have
\[
\frac{h}{2\pi}\times \mathcal{N}(f,[0,2\pi])=\Esp_X\left[\mathcal{N}(f,[X,X+h])\right].
\]
\end{lma}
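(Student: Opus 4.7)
The plan is a direct Fubini argument on the circle $\mathbb{R}/2\pi\mathbb{Z}$. Under the finiteness hypothesis, I would list the zeros of $f$ in a fundamental domain $[0,2\pi)$ as $t_1,\ldots,t_N$, so that $N=\mathcal{N}(f,[0,2\pi))$, and by $2\pi$-periodicity the full zero set of $f$ in $\mathbb{R}$ equals $\bigcup_{i=1}^N (t_i+2\pi\mathbb{Z})$.

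The key elementary observation is the following: for each index $i$ and each $X\in[0,2\pi)$, since $0<h<2\pi$, the interval $[X,X+h]$ contains exactly one translate of $t_i$ by a multiple of $2\pi$ if and only if $X$ belongs to a specific circular arc of length $h$, namely $[t_i-h,t_i]$ viewed modulo $2\pi$; otherwise it contains none. Writing $A_i\subset[0,2\pi)$ for that arc, and excluding the finite (hence $\lambda$-null) set of boundary values of $X$ for which an endpoint of $[X,X+h]$ happens to coincide with some $t_i$, this yields the pointwise identity
\[
\mathcal{N}(f,[X,X+h]) = \sum_{i=1}^N \mathds{1}_{A_i}(X).
\]

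Taking expectation under $\Prob_X$ and swapping the finite sum with the integral then gives
\[
\Esp_X\!\left[\mathcal{N}(f,[X,X+h])\right] = \sum_{i=1}^N \Prob_X(X\in A_i) = \sum_{i=1}^N \frac{h}{2\pi} = \frac{h}{2\pi}\,\mathcal{N}(f,[0,2\pi)),
\]
which, after identifying $\mathcal{N}(f,[0,2\pi))=\mathcal{N}(f,[0,2\pi])$ on the circle (the endpoint $0\equiv 2\pi$ contributing at most a single zero, absorbed by the convention of counting on $\mathbb{R}/2\pi\mathbb{Z}$), is the desired formula. There is no substantial obstacle here; the only point requiring a bit of care is the wrap-around of $[X,X+h]$ when $X+h>2\pi$, which is handled transparently by working on the circle rather than on the interval.
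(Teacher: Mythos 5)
Your proof is correct and takes essentially the same approach the paper indicates, namely the Fubini argument between the empirical measure of the zeros and the uniform law of $X$ (the paper itself only sketches this and refers to \cite[Lemma 3]{AP19}). Your explicit version — writing $\mathcal{N}(f,[X,X+h])=\sum_{i=1}^N \mathds{1}_{A_i}(X)$ with each $A_i$ an arc of length $h$ and integrating term by term — is that Fubini/swap spelled out, and your remark on the endpoint convention ($[0,2\pi]$ versus the circle $\mathbb{R}/2\pi\mathbb{Z}$) correctly addresses the only potential ambiguity in the statement.
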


Applying Lemma \ref{lm.num.zeros.unif} to $f_n$ and $h=\frac{2\pi}{n}$, we obtain for $X$ a uniform random variable on $[0,2\pi]$ which is independent of the sequences $(a_k)_{k\geq 1}$ and $(b_k)_{k \geq 1}$ that
\begin{equation} \label{est.numb.zero.X}
\frac{\mathcal{N}(f_n,[0,2\pi])}{n} =\Esp_X \left[\mathcal{N}\left(f_n,\left[X,X+\frac{2\pi}{n}\right]\right)\right].
\end{equation}
The previous Equation (\ref{est.numb.zero.X}) legitimates the introduction of the stochastic process $(g_n(t))_{t \in [0,2\pi]}$ defined by  $g_n(t):= f_n \left(X+\frac{t}{n}\right)$, which is naturally the one studied in the last Section \ref{sec.SZ.func}, given that we have
\begin{equation}\label{numb.zeros.gn}
\frac{\mathcal{N}(f_n,[0,2\pi])}{n} =\Esp_X \left[\mathcal{N}\left(g_n,\left[0,2\pi\right]\right)\right].
\end{equation}

Indeed, one can then reasonably guess that the convergence of the random processes $\{g_n(\cdot)\}_{n\ge 1}$ in a suitable functional space will imply the convergence in law of the sequence of random variables $\{\mathcal{N}\left(g_n,\left[0,2\pi\right]\right)\}_{n\ge 1}.$

\par
\medskip

Let us make this heuristics rigorous. The limit process $g_\infty$, which is given by Theorem \ref{thm.SZ.func}, may be interpreted as a stationary Gaussian process with correlation $\sin_c$ multiplied by a random and independent amplitude given by $\sqrt{\psi_\rho(X)}$. As mentioned above, under the assumptions of Theorems \eqref{thm.asym.mean} and \eqref{thm.as.asym},  $\psi_\rho(X)>0$ almost surely. Given that the stationary Gaussian process with correlation $\sin_c$ is non degenerated, we derive that the limit process $g_\infty$ is non degenerated as well. Hence,

\begin{equation}\label{non-dege}
\mathbb{P}\otimes\mathbb{P}_X\text{-a.s.},\,\forall t\in[0,2\pi],\,|g_\infty(t)|+|g_\infty'(t)|>0.
\end{equation}

On the other hand, one has the following deterministic result, which ensures the continuity of the number of roots with respect to $\mathcal{C}^1$ topology provided that the limit is non degenerated.
\begin{equation}\label{deter-cv}
\left.\begin{array}{l}
u_n\xrightarrow[n\to\infty]{\mathcal{C}^1([0,2\pi])}~u\\
\\
\inf_{t \in[0,2\pi]}\left(|u(t)|+|u'(t)|\right)>0
\end{array}
\right\}
\Rightarrow~\text{Card}\left(u_n^{-1}\left(\left\{0\right\}\right)\cap[0,2\pi]\right)\to\text{Card}\left(u^{-1}\left(\left\{0\right\}\right)\cap[0,2\pi]\right).
\end{equation}

Combining Theorem \eqref{thm.SZ.func}, the $\mathcal C^1$ continuity \eqref{deter-cv} of the number of zeros and the non-degeneracy of $g_\infty$ given by \eqref{non-dege},  together with the continuous mapping Theorem implies the following proposition which will be central in our  forthcoming proofs.

\begin{prop} \label{prop.conv.dist}
Consider the localized process $(g_n(t))_{t \in [0,2\pi]}:=\left(f_n\left(X+\frac{t}{n}\right)\right)_{t \in [0,2\pi]}$ 
\begin{itemize}
\item[(a)] $\Prob$-almost surely, as $n$ goes to infinity, $\mathcal{N}(g_n,[0,2\pi])$ converges in distribution under $\Prob_X$ towards $\mathcal{N}(g_{\infty}, [0,2\pi])$.
\item[(b)] As $n$ goes to infinity, the number of zeros $\mathcal{N}(g_n,[0,2\pi])$ converges in distribution under $\Prob\otimes\Prob_X$ towards $\mathcal{N}(g_{\infty}, [0,2\pi])$.
\end{itemize}
\end{prop}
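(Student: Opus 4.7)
The plan is to apply the continuous mapping theorem, since all three ingredients needed are already at hand: the functional $\mathcal{C}^1$ convergence from Theorem \ref{thm.SZ.func}, the deterministic continuity \eqref{deter-cv} of $u \mapsto \mathcal{N}(u,[0,2\pi])$ on paths without degenerate zeros, and the non-degeneracy \eqref{non-dege} of the limit $g_\infty$.

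For part (a), I would fix a realization of the coefficients in a set of full $\Prob$-measure on which Theorem \ref{thm.SZ.func} holds, so that under $\Prob_X$ the process $g_n$ converges in distribution to $g_\infty = \sqrt{2\pi\psi_\rho(X)}\,N$ in the $\mathcal{C}^1$ topology. Set
\[
\mathcal{D} := \Bigl\{ u \in \mathcal{C}^1([0,2\pi]) \,:\, \inf_{t \in [0,2\pi]}\bigl(|u(t)| + |u'(t)|\bigr) > 0 \Bigr\},
\]
and let $\Phi : u \mapsto \mathcal{N}(u,[0,2\pi])$. By \eqref{deter-cv}, $\Phi$ is continuous at every point of $\mathcal{D}$. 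To invoke the continuous mapping theorem under $\Prob_X$, I need $g_\infty \in \mathcal{D}$ almost surely. Under the standing hypothesis that $\psi_\rho > 0$ Lebesgue-almost everywhere, one has $\psi_\rho(X) > 0$ $\Prob_X$-a.s., and conditionally on such $X$ the process $g_\infty$ is a positive multiple of the stationary Gaussian process $N$ with smooth $\sin_c$ covariance, which is non-degenerate at $0$; a standard Bulinskaya-type argument applied to the Gaussian vector $(N_t, N'_t)$ then guarantees that $N$ has only simple zeros almost surely. This is exactly the content of \eqref{non-dege}, and the continuous mapping theorem applied under $\Prob_X$ delivers (a).

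For part (b), I would simply marginalize. Let $\phi : \mathbb{R} \to \mathbb{R}$ be bounded and continuous. Part (a) gives, for $\Prob$-a.s.\ $\omega$,
\[
\Esp_X\bigl[\phi(\mathcal{N}(g_n,[0,2\pi]))\bigr] \xrightarrow[n \to \infty]{} \Esp_X\bigl[\phi(\mathcal{N}(g_\infty,[0,2\pi]))\bigr],
\]
and dominated convergence under $\Prob$ transfers this to convergence of the corresponding expectations with respect to $\Prob \otimes \Prob_X$, proving (b). The only delicate point in the whole argument is the non-degeneracy of $g_\infty$, which is ultimately a standard fact about simple zeros of smooth non-degenerate stationary Gaussian processes and must be invoked carefully since it underlies the validity of the continuous mapping step.
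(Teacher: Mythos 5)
Your proof is correct and takes essentially the same route as the paper: the authors likewise derive the proposition by combining Theorem \ref{thm.SZ.func} (the $\mathcal{C}^1$ functional CLT under $\Prob_X$, $\Prob$-a.s.), the deterministic continuity \eqref{deter-cv} of the zero-counting functional at non-degenerate paths, and the non-degeneracy \eqref{non-dege} of $g_\infty$, invoking the continuous mapping theorem for (a) and then integrating with dominated convergence for (b). The brief Bulinskaya-type justification you add for the non-degeneracy of the $\sin_c$ Gaussian process is a welcome elaboration but matches the paper's implicit reasoning.
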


Note that assertion (b) is a direct consequence of assertion (a) as it is sufficient to integrate it with respect to $\Prob$ and pass to the limit. Indeed, if $h$ is a continuous and bounded test function and as $n$ goes to infinity we have
\[
\mathbb E_X\left[  h \left( \mathcal{N}(g_n,[0,2\pi])\right) \right] \to \mathbb E_X\left[  h \left( \mathcal{N}(g_{\infty},[0,2\pi])\right) \right], 
\]
then by dominated converge, we have also
\[
\Esp_{\Prob \otimes \Prob_X}\left[  h \left( \mathcal{N}(g_n,[0,2\pi])\right) \right] \to  \Esp_{\Prob \otimes \Prob_X} \left[  h \left( \mathcal{N}(g_{\infty},[0,2\pi])\right) \right]. 
\]

\subsection{Study of the mean number of real zeros of $f_n$.}
The object of this section is to give the proof of Theorem \ref{thm.asym.mean}, which, thanks to Equation \eqref{numb.zeros.gn}, turns out to be equivalent to showing that 
\begin{equation} \label{eq.conv.first.mom}
\lim_{n \to +\infty}\Esp_{\Prob \otimes \Prob_X}\left[\mathcal{N}(g_n,[0,2\pi])\right]= \Esp_{\Prob \otimes \Prob_X}\left[\mathcal{N}(g_{\infty},[0,2\pi])\right].
\end{equation}
We stress the fact that here, no condition is required on the singular component $\mu_{\rho}^s$ of the spectral measure $\mu_{\rho}$.
In order to obtain the convergence of the first moment in the equation \eqref{eq.conv.first.mom}, the convergence in distribution given by the Proposition \ref{prop.conv.dist} under $\Prob\otimes\Prob_X$ is not sufficient. We need to prove some equi-integrability condition. To achieve this, we first aim at proving some logarithmic moment estimates for $f_n(X)=g_n(0)$.

\begin{lma} \label{lm.est.log.P}
Let $\gamma >1$. If $\log(\psi_{\rho})\in L^{\gamma}$, there exists a constant $C_{\gamma}>0$ such that uniformly on $n \geq 1$, we have
\[
\Esp_{\Prob \otimes \Prob_X} \left| \log(|f_n(X)|)\right|^{\gamma} \leq C_{\gamma}(1+\|\log(\psi_{\rho})\|_{L^{\gamma}}).
\]
\end{lma}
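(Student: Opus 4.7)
The strategy is to condition on the random phase $X$ and exploit the Gaussianity of $f_n$ under $\mathbb{P}$. Given $X$, the random variable $f_n(X)$ is a centered Gaussian with variance
\[
\sigma_n^2(X) := \mathbb{E}[f_n(X)^2 \mid X] = 2\pi\, K_n \ast \mu_\rho(X).
\]
Writing $f_n(X) = \sigma_n(X) Z$ with $Z \sim \mathcal{N}(0,1)$ independent of $X$, and using the elementary inequality $|a+b|^\gamma \leq 2^{\gamma-1}(|a|^\gamma + |b|^\gamma)$, one reduces the claim to proving a uniform bound of the form
\[
\mathbb{E}_X \bigl|\log \sigma_n(X)\bigr|^\gamma \leq C_\gamma \bigl(1 + \|\log \psi_\rho\|_{L^\gamma}^\gamma\bigr),
\]
since $\mathbb{E}|\log|Z||^\gamma$ is a pure constant. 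This in turn amounts, up to absorbing $\log(2\pi)$ into constants, to controlling the positive and negative parts of $\log(K_n \ast \mu_\rho)$ separately in $L^\gamma(dx/2\pi)$.

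For the negative part, the key tool is Jensen's inequality applied to the convex function $-\log$, using the fact that $K_n(x-\cdot)/(2\pi)$ is a probability density on $[0,2\pi]$ (since $K_n \geq 0$ and $\int K_n = 2\pi$). Combined with the pointwise lower bound $K_n \ast \mu_\rho \geq K_n \ast \psi_\rho$ afforded by the non-negativity of the singular part, this yields
\[
\log(K_n \ast \mu_\rho(x)) \geq \log(K_n \ast \psi_\rho(x)) \geq K_n \ast \log \psi_\rho(x),
\]
so that $(\log K_n \ast \mu_\rho)_- \leq K_n \ast |\log \psi_\rho|$. Young's convolution inequality, together with $\|K_n\|_{L^1}/(2\pi) = 1$, then gives the desired $L^\gamma$ control by $\|\log\psi_\rho\|_{L^\gamma}$.

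For the positive part, the pointwise bound $K_n \leq n$ is too crude as it depends on $n$. The crucial observation is that the normalization of the Fejér kernel yields a uniform $L^1$ bound, namely
\[
\mathbb{E}_X\bigl[K_n \ast \mu_\rho(X)\bigr] = \frac{1}{2\pi} \int_0^{2\pi} K_n \ast \mu_\rho(x)\, dx = \frac{1}{2\pi},
\]
where the last equality follows from Fubini since $\mu_\rho$ is a probability measure. Markov's inequality then gives the tail bound $\mathbb{P}_X(K_n \ast \mu_\rho(X) > t) \leq 1/(2\pi t)$, and the layer-cake representation provides
\[
\mathbb{E}_X \bigl[(\log K_n \ast \mu_\rho(X))_+^\gamma\bigr] \leq \int_0^\infty \gamma u^{\gamma-1} \min\!\left(1, \frac{e^{-u}}{2\pi}\right) du,
\]
which is a finite constant depending only on $\gamma$. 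The main conceptual obstacle is precisely this positive part: one has to avoid the naive $n$-dependent estimate and instead exploit the integral normalization, which is independent of the singular component of $\mu_\rho$. Combining the two estimates yields the lemma.
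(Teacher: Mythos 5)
Your proof is correct and follows essentially the same strategy as the paper: after factoring out a standard Gaussian and reducing to bounding $\log(K_n\ast\mu_\rho(X))$, the negative part is handled by Jensen applied to the convolution of $\log\psi_\rho$ with the Fej\'er kernel together with the lower bound $K_n\ast\mu_\rho\geq K_n\ast\psi_\rho$, and the positive part by exploiting the uniform-in-$n$ normalization $\mathbb{E}_X[K_n\ast\mu_\rho(X)]=1$. The only minor variation is in the positive-part step, where the paper uses the pointwise bound $|\log x|^\gamma\leq C_\gamma x$ for $x\geq 1$ and then takes expectation rather than your Markov-plus-layer-cake argument, but both are elementary and rest on exactly the same $L^1$ normalization of the Fej\'er kernel.
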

\begin{proof}
Notice that, {conditionally on $X$}, $f_n(X)$ is a centered Gaussian variable whose variance under $\Prob$ is given by $\Esp[f_n(X)^2]={2 \pi}\,  K_n \ast \mu_{\rho}(X)$. Since $K_n \ast \mu_{\rho}(X)\geq K_n \ast \psi_{\rho}(X)>0,$ $\Prob_X$-almost surely, we have
$$\Esp_{\Prob \otimes \Prob_X} \left|\log(|f_n(X)|)\right|^{\gamma} \leq C_{\gamma} \left(\Esp_{\Prob\otimes\Prob_X}\left| \log \left( \left|\frac{f_n(X)}{\sqrt{K_n\ast \mu_{\rho}(X)}}\right| \right)\right|^{\gamma}+\Esp_X\left|\log(K_n \ast \mu_{\rho}(X)\right|^{\gamma}\right).$$
Now, for every fixed $X$, under $\Prob,~\frac{f_n(X)}{\sqrt{K_n \ast \mu_{\rho}(X)}}$ is a standard Gaussian variable hence by Fubini inversion
$$\Esp_{\Prob \otimes \Prob_X} \left| \log \left( \left|\frac{f_n(X)}{\sqrt{K_n\ast \mu_{\rho}(X)}}\right| \right)\right|^{\gamma}=\kappa_{\gamma}<+\infty.$$
To finish the proof, one is left to control the term $\Esp_X\left|\log(K_n\ast \psi_{\rho}(X)\right|^{\gamma}$. 
\begin{itemize}
\item Assume first that $K_n \ast \mu_{\rho}(X) \in (0,1]$. Since $|\log(\cdot)|$ is non increasing on $(0,1]$, 
\[
\left|\log(K_n \ast \mu_{\rho}(X))\right|=\left| \log\left(K_n \ast \psi_{\rho}(X)+K_n\ast \mu_s(X)\right)\right| \leq |\log(K_n \ast \psi_{\rho}(X))|.
\]
Our assumption implies that $K_n \ast \psi_{\rho}(X) \in (0,1]$, hence by Jensen inequality, we have
\[
\log(K_n \ast \psi_{\rho}(X))\geq K_n \ast \log(\psi_{\rho})(X).
\]
As a result, we obtain that
\[
|\log(K_n\ast \psi_{\rho}(X)|^{\gamma}\hspace{-1 cm}\stackrel{\begin{array}{c}x\mapsto|x|^\gamma\text{decreases on}~\mathbb{R}^{-}\\\underbrace{K_n\ast \psi_\rho(X)<1}\\\\\end{array}}{\leq} \hspace{-1cm}|K_n\ast \log(\psi_{\rho})(X)|^{\gamma} \hspace{- 1cm}\stackrel{\begin{array}{l}\underbrace{\text{convexity of} ~x\mapsto |x|^\gamma}\\\\
\end{array}}{\leq} \hspace{-1cm}K_n \ast |\log(\psi_{\rho})|^{\gamma}(X).
\]

\item Assume now that $K_n\ast \mu_{\rho}(X)>1$. There exists a constant $C_{\gamma}$ such that  $|\log(x)|^\gamma \le C_\gamma |x|$ on $[1,+\infty[$ and thus
\[\left|\log(K_n \ast \mu_{\rho}(X))\right|^{\gamma} \leq C_{\gamma} K_n \ast \mu_{\rho}(X).\]
\end{itemize}
 Putting everything together, 
\[
|\log(K_n\ast \mu_{\rho}(X))|^{\gamma} \leq C_{\gamma}\left(K_n \ast \mu_{\rho}(X)+ K_n \ast |\log(\psi_{\rho})|^{\gamma}(X) \right).
\]
 and taking the expectation w.r.t. $\Prob_X$ in the previous inequality, we obtain the following estimate which is uniform on $n\ge 1$.
\begin{eqnarray*}
\Esp_X\left|\log(K_n\ast\psi_{\rho}(X))\right|^{\gamma}& \leq &C_{\gamma}\left(1+ \Esp_X\left[K_n \ast \left| \log(\psi_{\rho})\right|^{\gamma}(X)\right]\right)\\
&\leq &C_{\gamma}(1+\|\log(\psi_{\rho})\|_{L^{\gamma}})
\end{eqnarray*}
\end{proof}
Let us now see how the previous logarithmic estimate can be used to obtain some moment estimates for the number of zeros of $g_n$, that is to say for $\mathcal{N}(g_n,[0,2\pi])$.
\begin{prop}\label{prop.sup.mom}Let $\eta>0$. If $\log(\psi_{\rho}) \in L^{1+\eta}$, we have
\[
\sup_{n \geq 1}\Esp_{\Prob \otimes \Prob_X}\left [\left| \mathcal{N}(g_n,[0,2\pi])\right|^{1+\eta/2} \right] < +\infty.
\]
\end{prop}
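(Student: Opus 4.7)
The plan is to apply Jensen's formula to a holomorphic extension of $g_n$ in order to bound $N_n := \mathcal{N}(g_n, [0, 2\pi])$ by a logarithmic functional whose $(1+\eta/2)$-moment is controlled uniformly in $n$. Setting $\tilde g_n(w) := f_n(X + w/n)$ (which is entire since $f_n$ is a trigonometric polynomial) and $h(z) := \tilde g_n(\pi + z)$, all real zeros of $g_n$ in $[0, 2\pi]$ lie in $\{|z| \leq \pi\}$. Almost surely $h(0) = f_n(X+\pi/n)\neq 0$---by Lemma \ref{lem.nondeg}, $\Esp[h(0)^2|X]= 2\pi K_n*\mu_\rho(X+\pi/n)\geq C/n > 0$---so Jensen's formula on the disk $|z|\leq 2\pi$ yields
\begin{equation*}
N_n \leq \frac{1}{\log 2}\left(\frac{1}{2\pi}\int_0^{2\pi}\bigl(\log|\tilde g_n(\pi + 2\pi e^{i\theta})|\bigr)^{+} d\theta + \bigl|\log|f_n(X + \pi/n)|\bigr|\right).
\end{equation*}
Taking $(1+\eta/2)$-moments and applying Minkowski's inequality reduces the problem to bounding each of the two terms uniformly in $n$.

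The second term has a uniform $L^{1+\eta/2}$ bound by Lemma \ref{lm.est.log.P} with exponent $\gamma = 1 + \eta/2$, since $X + \pi/n$ is $\mathrm{Unif}([0, 2\pi])$ modulo $2\pi$ and $\log \psi_\rho \in L^{1+\eta} \subset L^{1+\eta/2}$. For the first term, the key estimate is the uniform-in-$n$ variance bound
\begin{equation*}
\Esp \otimes \Esp_X\bigl[|\tilde g_n(\pi + u + iv)|^2\bigr] = \frac{1}{n}\sum_{k=1}^n \cosh\left(\frac{2kv}{n}\right) \leq \cosh(4\pi), \qquad u^2 + v^2 \leq (2\pi)^2.
\end{equation*}
To establish this I would first compute $\Esp[|\tilde g_n(w)|^2|X] = \frac{1}{n}\sum_{k,l}\rho(k-l)\cos(k\zeta - l\bar\zeta)$ with $\zeta = X+w/n$ (using $\cos(k\zeta)\overline{\cos(l\zeta)} + \sin(k\zeta)\overline{\sin(l\zeta)} = \cos(k\zeta - l\bar\zeta)$), and then exploit the orthogonality $\Esp_X[\cos((k-l)X)] = \delta_{k,l}$ to kill all off-diagonal contributions and make the final expression independent of the spectral measure.

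With this variance estimate, the elementary inequality $(\log^+ x)^{1+\eta/2} \leq C_\eta\, x$ (valid for every $x \geq 0$), combined with Jensen's inequality applied twice---the convex power $t \mapsto t^{1+\eta/2}$ to push the exponent inside the angular mean, then the concave square-root $t \mapsto \sqrt{t}$ to exchange $\Esp \otimes \Esp_X$ with $\sqrt{\cdot}$---together with Fubini's theorem, yields
\begin{equation*}
\Esp \otimes \Esp_X\left[\left(\frac{1}{2\pi}\int_0^{2\pi}\bigl(\log|\tilde g_n(\pi + 2\pi e^{i\theta})|\bigr)^{+}d\theta\right)^{1+\eta/2}\right] \leq C_\eta \sqrt{\cosh(4\pi)},
\end{equation*}
completing the argument. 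The main obstacle is the variance computation: the averaging over $X$ is essential, since $\Esp[|\tilde g_n(w)|^2|X]$ may diverge along atoms of $\mu_\rho^s$; it is the cancellation in the double sum upon integration over $X$ that renders the bound independent of the singular part of the spectral measure, in line with the absence of any assumption on $\mu_\rho^s$ in Theorem \ref{thm.asym.mean}.
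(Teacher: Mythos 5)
Your argument is correct but takes a genuinely different route. The paper's proof goes through the layer-cake representation of the $(1+\eta/2)$-moment of $\mathcal{N}(g_n,[0,2\pi])$, then bounds each tail probability $\Prob\otimes\Prob_X(\mathcal{N}(g_n,[0,2\pi])>s)$ by iterating Rolle's theorem $\lfloor s\rfloor$ times to produce a small-ball event for $g_n(0)=f_n(X)$, which is split via a truncation level $R=R(s)$ into a small-ball term (handled by Markov together with Lemma \ref{lm.est.log.P}) and a tail of $\|g_n^{(\lfloor s\rfloor)}\|_\infty$ (handled by an $L^2$-Sobolev embedding and the Birkhoff--Khinchine bound on $\frac1n\sum_k(a_k^2+b_k^2)$). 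You instead invoke Jensen's formula for the entire extension $\tilde g_n(w)=f_n(X+w/n)$, which trades the Rolle/Sobolev machinery for a log-moment at the center (again Lemma \ref{lm.est.log.P}, with exponent $1+\eta/2<1+\eta$ and using $f_n(X+\pi/n)\stackrel{\text{law}}{=}f_n(X)$) plus a growth bound on the boundary circle. Your computation that $\Esp\otimes\Esp_X[|\tilde g_n(\pi+u+iv)|^2]=\frac1n\sum_{k=1}^n\cosh(2kv/n)\le\cosh(4\pi)$ is correct---the $X$-average kills all off-diagonal pairs, hence removes any dependence on $\rho$ or on the singular part of $\mu_\rho$---and the closing use of $(\log^+x)^{1+\eta/2}\le C_\eta x$ together with two applications of Jensen (convexity of $t^{1+\eta/2}$, concavity of $\sqrt{t}$) is clean. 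Both proofs thus rest on the same key Lemma \ref{lm.est.log.P}, but your complex-analytic zero counting is shorter and arguably more transparent, at the cost of stepping outside the purely real-analytic framework the paper uses throughout. The only minor point worth stating is that Jensen's formula counts zeros with multiplicity, so $\mathcal{N}(g_n,[0,2\pi])\le n_h(\pi)$ is an inequality (enough here), and that the log-integrability hypothesis already forces $\psi_\rho>0$ a.e., so Lemma \ref{lem.nondeg} applies and $h(0)\ne 0$ a.s., making the Jensen step legitimate.
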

\begin{proof}
We have classically
$$\Esp_{\Prob \otimes \Prob_X} \left[\mathcal{N}(g_n,[0,2\pi])^{1+\eta/2}\right] =(1+\eta/2)\int_0^{+\infty}s^{\eta/2} \Prob \otimes \Prob_X \left(\mathcal{N}(g_n,[0,2\pi])>s\right)ds.$$
By iterating Rolle Lemma $\left\lfloor s\right\rfloor$ times, we have
\[
\Prob\otimes \Prob_X\left(\mathcal{N}(g_n,[0,2\pi])>s\right) \leq \Prob\otimes \Prob_X\left(|g_n(0)|\leq \frac{(2\pi)^{\left\lfloor s\right\rfloor}}{\left\lfloor s\right\rfloor!}\|g_n^{(\left\lfloor s\right\rfloor)}\|_{\infty}\right),
\]
so that for any $R>0$,
\begin{equation} \label{eq.P.opt}
\Prob \otimes \Prob_X\left(\mathcal{N}(g_n,[0,2\pi])>s\right) \leq \Prob\otimes \Prob_X\left(|g_n(0)|\leq \frac{(2\pi)^{\left\lfloor s\right\rfloor}R}{\left\lfloor s\right\rfloor!}\right) +\Prob\otimes \Prob_X\left(\|g_n^{(\left\lfloor s\right\rfloor)}\|_{\infty}>R\right).
\end{equation}
Applying Markov inequality, we get
\[
\Prob\otimes \Prob_X\left(\|g_n^{(\left\lfloor s\right\rfloor)}\|_{\infty}>R\right)\leq \frac{\Esp_{\Prob\otimes \Prob_X} \|g_n^{(\left\lfloor s\right\rfloor)}\|_{\infty}^{2}}{R^2}.
\]
Using $L^2$-Sobolev embedding, we can compare the uniform norm with the norm $\| \cdot\|_2$ of $L^2([0,2\pi])$ of the derivatives, more precisely
\[
\Esp_{X}\|g_n^{(\left\lfloor s\right\rfloor)}\|_{\infty}^{2} \leq C \left(\Esp_{X}\|g_n^{(\left\lfloor s\right\rfloor)}\|_2^{2}+\Esp_{X} \|g_n^{(\left\lfloor s\right\rfloor+1)}\|_2^{2} \right).
\]
But for each $\ell \geq 1$, since for $(X,Y)$ uniformly distributed on $[0,2\pi]$ and independent we have $X+\frac{Y}{n}\sim X$, we deduce that
$$\Esp_X \|g_n^{(\ell)}\|_2^2=\frac{1}{2n}\sum_{k=1}^{n}\left(\frac{k}{n}\right)^{2\ell}(a_k^2+b_k^2) \leq \frac{1}{2n}\sum_{k=1}^{n}(a_k^2+b_k^2),$$
which yields that
\[\Esp_{\Prob \otimes \Prob_X} \|g_n^{(s)}\|_{2}^{2} \leq 1.
\]
Hence
\begin{equation} \label{eq.sup.P.mark}
\sup_{n\geq 1}\Prob \otimes \Prob_X \left(\|g_n^{(\left\lfloor s\right\rfloor)}\|_{\infty}>R\right) \leq \frac{1}{R^2}.
\end{equation}
We can choose $R(s)=(1+|s|)^{\eta/2+1}$.\\
On the other hand, supposing that for $s$ large enough, $\frac{(2\pi)^{\left\lfloor s\right\rfloor}R}{\left\lfloor s\right\rfloor!} <1$, we get by Markov inequality that
\begin{eqnarray*}
\Prob\otimes \Prob_X\left(|g_n(0)|\leq \frac{(2\pi)^{\left\lfloor s\right\rfloor}R}{\left\lfloor s\right\rfloor!}\right)&=&\Prob \otimes \Prob_X\left( |f_n(X)|\leq \frac{(2\pi)^{\left\lfloor s\right\rfloor}R}{\left\lfloor s\right\rfloor!}\right)\\
&=&\Prob\otimes\Prob_X\left(\left| \log |f_n(X)| \right| \geq \left | \log \left(\frac{(2\pi)^{\left\lfloor s\right\rfloor} R}{\left\lfloor s\right\rfloor!}\right)\right| \right)\\
&\leq&\frac{\Esp_{\Prob \otimes \Prob_X}\left[\left| \log (|f_n(X)|)\right|^{1+\eta}\right]}{\left|\log\left(\frac{(2\pi)^{\left\lfloor s\right\rfloor}R(s)}{\left\lfloor s\right\rfloor!} \right)\right|^{1+\eta}}\leq \frac{C_{\eta}(1+\|\log(\psi_{\rho})^{1+\eta}\|_{L^{1}})}{\left|\log\left(\frac{(2\pi)^{\left\lfloor s\right\rfloor}R(s)}{\left\lfloor s\right\rfloor!} \right)\right|^{1+\eta}}.
\end{eqnarray*}
Hence, for all $\eta >0$ and $s$ large enough,
\begin{equation}\label{eq.sup.P}
\sup_{n\geq 1}\Prob \otimes \Prob_X\left(|g_n(0)|\leq \right) =O\left(\left|\frac{1}{\left\lfloor s\right\rfloor \log (\left\lfloor s\right\rfloor)}\right|^{1+\eta}\right).
\end{equation}
Combining Equations (\ref{eq.P.opt}), (\ref{eq.sup.P.mark}) and (\ref{eq.sup.P}), we thus get that
\[
\sup_{n\geq 1}\int_0^{+\infty}s^{\eta/2}\Prob \otimes \Prob_X\left(\mathcal{N}(g_n,[0,2\pi])>s\right) ds < +\infty,
\]
hence the result.
\end{proof}
We can now complete the proof of Theorem \ref{thm.asym.mean}.  
Indeed, Combining the last Proposition \ref{prop.sup.mom} of equi-integrability with the convergence in distribution established earlier in Proposition \ref{prop.conv.dist}, we obtain the convergence of the first moment:
$$\lim_{n\to +\infty}\Esp\left[ \frac{\mathcal{N}(f_n,[0,2\pi])}{n}\right]= \Esp_{\Prob \otimes \Prob_X}[\mathcal{N}(g_{\infty},[0,2\pi])].$$
Now, recall that $g_{\infty}(\cdot)=\sqrt{\psi_{\rho}(X)}N(\cdot)$ with $X$ uniformly distributed on $[0,2\pi]$ and $N$ a stationary Gaussian process with $\sin_c$ covariance function that is independent of $X$. Since $\psi_{\rho}>0$ almost surely, the zeros of $g_{\infty}$ are the same as the ones of $N(\cdot)$. A simple us of Kac--Rice formula for $(N(t))_{t\in\mathbb{R}}$ then implies that

$$\Esp_{\Prob \otimes \Prob_X}[\mathcal{N}(g_{\infty},[0,2\pi])]=\Esp_{\Prob}\left[\mathcal{N}(N,[0,2\pi])\right]=\frac{2}{\sqrt{3}},$$
which concludes the proof.

\subsection{Study of the almost-sure number of real zeros of $f_n$}
In this last section, we reinforce the hypotheses on the spectral density to pass from the convergence in expectation stated in Theorem \ref{thm.asym.mean} to the almost sure convergence stated in Theorem \ref{thm.as.asym}.  We suppose this time that $\mu_{\rho}$ is purely absolutely continuous i.e. $\mu_{\rho}(dx)= \psi_{\rho}(x)dx$ and that the spectral density satisfies the following conditions.
\begin{description}
\item[A.1] There exists $\alpha \in (0,1]$ such that $\psi_{\rho}$ verifies a Besov condition of order $\alpha>0$ i.e. for $\delta >0$,
\[
\omega^{\ast}(\psi_{\rho},\delta):=\sup_{|h|\leq \delta}\left\| \psi_{\rho}(\cdot+h)+\psi_{\rho}(\cdot-h)-2\psi_{\rho}(\cdot)\right\|_{L^{1}([0,2\pi])} = O(\delta^{\alpha}).
\]
\item[A.2] There exists $\gamma>0$ such that $\frac{1}{\psi_{\rho}^{\gamma}} \in L^1\left([0,2\pi]\right)$ . 
\end{description}
In order to ease the reading of the proof of Theorem \ref{thm.as.asym}, let us describe below the main steps of the argumentation.
\par
\medskip
\noindent
\underline{Sketch of the proof of Theorem \ref{thm.as.asym}:}
\par
\noindent
\begin{itemize}
\item[(1)] The point a) of Proposition \ref{prop.conv.dist} ensures that $\mathbb{P}$-almost surely, we have 
$$\mathcal{N}\left(g_n,[0,2\pi]\right)\xrightarrow[n\to\infty]{\text{Law under}~\mathbb{P}_X}\mathcal{N}\left(g_\infty,[0,2\pi]\right).$$

\item[(2)] As in the proof of Theorem \ref{thm.asym.mean}, the previous convergence is not sufficient to take the limit as $n\to\infty$ in the expectation $\mathbb{E}_X$. Like before, one seeks to establish that

$$\Prob\text{-a.s.},\,\exists \eta>1,\,\sup_n \, \mathbb{E}_{X}\left(|\log(f_n(X)|\right)<C_{\eta,\omega}.$$

Unfortunately, Lemma \ref{lm.est.log.P} does not apply here since we consider only the expectation with respect to $\mathbb{P}_X$ in the above estimate. To circumvent this issue we proceed in two distinct steps. We first show that

$$\forall \eta>0,\,\forall \theta>0,\,\mathbb{P}\text{-a.s.},\,\exists C_{\eta,\theta,\omega}>0\,\,\text{s.t.} \,\,\forall n\ge 1,\,\mathbb{E}_X\left[|\log(X)|^{1+\eta}\right]\le C_{\eta,\theta,\omega} n^\theta.$$

In other words, almost surely, the quantity $\mathbb{E}_X\left[|\log(X)|^{1+\eta}\right]$ grows slower than any polynomial. When studying the uniform integrability with respect to $\mathbb{P}_X$ of $\mathcal{N}(g_n,[0,2\pi])$ we can make the following truncation at $s=n^\lambda$ for a suitable $\lambda>0$ (which depends on the Besov regularity of $\psi_ \rho$ in the assumption A.1.)

\begin{eqnarray*}
\Esp_{X} \left[\mathcal{N}(g_n,[0,2\pi])^{1+\eta/2}\right] &=&(1+\eta/2)\int_0^{+\infty}s^{\eta/2} \Prob_X \left(\mathcal{N}(g_n,[0,2\pi])>s\right)ds\\
&=&(1+\eta/2)\underbrace{\int_0^{n^\lambda}s^{\eta/2} \Prob_X \left(\mathcal{N}(g_n,[0,2\pi])>s\right)ds}_{:=I_1}\\
&+&(1+\eta/2)\underbrace{\int_{n^\lambda}^{\infty}s^{\eta/2} \Prob_X \left(\mathcal{N}(g_n,[0,2\pi])>s\right)ds}_{:=I_2}.
\end{eqnarray*}

Then, the integral $I_2$ will be handled using an almost sure small enough polynomial growth of $\mathbb{E}_X\left[|\log(X)|^{1+\eta}\right]$ and the integral $I_1$ will be handled using a quantitative convergence in distribution in Theorem \ref{prop.conv.fc}.

\end{itemize}

Remark that Assumption \textbf{A.2} is sharper than the previous assumption on the log-integrability of the spectral density $\psi_{\rho}$. Indeed, we have the following Lemma.
\begin{lma}\label{lm.mn.log}
If $\frac{1}{\psi_{\rho}}\in L^{\gamma}([0,2\pi])$ for some $\gamma >0$, then for all $r>1, ~\log(\psi_{\rho}) \in L^{r}([0,2\pi])$. 
\end{lma}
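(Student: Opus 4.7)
The plan is to split $\int_0^{2\pi}|\log\psi_\rho(x)|^r\,dx$ according to whether $\psi_\rho(x)\ge 1$ or $\psi_\rho(x)<1$, and bound each piece by an integrable function, using respectively that $\psi_\rho\in L^1([0,2\pi])$ (since it is the density part of a probability measure, hence of total mass at most one) and that $1/\psi_\rho\in L^\gamma([0,2\pi])$.

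On the set $\{\psi_\rho\ge 1\}$ we have $|\log\psi_\rho|^r=(\log\psi_\rho)^r$. Since $(\log t)^r/t\to 0$ as $t\to+\infty$, there exists a constant $C_r>0$ such that $(\log t)^r\le C_r\, t$ for every $t\ge 1$. Hence
\[
\int_{\{\psi_\rho\ge 1\}}(\log\psi_\rho)^r\,dx \;\le\; C_r\int_0^{2\pi}\psi_\rho(x)\,dx \;\le\; C_r.
\]

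On the set $\{0<\psi_\rho<1\}$ we write $|\log\psi_\rho|^r=(\log(1/\psi_\rho))^r$. Since $(\log s)^r/s^\gamma\to 0$ as $s\to+\infty$, there exists a constant $C_{r,\gamma}>0$ such that $(\log s)^r\le C_{r,\gamma}(1+s^\gamma)$ for every $s\ge 1$. Applying this with $s=1/\psi_\rho(x)\ge 1$ yields
\[
(\log(1/\psi_\rho))^r\;\le\;C_{r,\gamma}\bigl(1+\psi_\rho^{-\gamma}\bigr),
\]
so that
\[
\int_{\{\psi_\rho<1\}}|\log\psi_\rho|^r\,dx \;\le\; 2\pi\,C_{r,\gamma}+C_{r,\gamma}\,\|1/\psi_\rho\|_{L^\gamma}^{\gamma}\;<\;+\infty.
\]
Summing the two contributions gives $\log(\psi_\rho)\in L^r([0,2\pi])$, which concludes the proof. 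The whole argument is essentially routine; there is no real obstacle, the only substantive point being the elementary observation that one can compare $(\log s)^r$ with any positive power $s^\gamma$ at infinity, so that both tails of $\log\psi_\rho$ (the one where $\psi_\rho$ is large, controlled by $\psi_\rho\in L^1$, and the one where $\psi_\rho$ is small, controlled by $\psi_\rho^{-\gamma}\in L^1$) are integrable to any power.
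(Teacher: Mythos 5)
Your proof is correct and takes essentially the same route as the paper: split according to whether $\psi_\rho$ is at least $1$ or less than $1$, control the first piece via $(\log t)^r\lesssim t$ and $\psi_\rho\in L^1$, and the second via $(\log(1/\psi_\rho))^r\lesssim \psi_\rho^{-\gamma}$ and the assumption $1/\psi_\rho\in L^\gamma$.
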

\begin{proof}Set $r>1$, since $|x|^\gamma |\log(x)|^r\xrightarrow[x\to 0]~0$, for some constant $C_{r,\gamma}$ one gets get
$$\forall x\in]0,1[,\,|\log(x)|^r\le \frac{C_{r,\gamma}}{|x|^\gamma}.$$

Besides, if $|x|>1$ we also have $|\log(x)|^r \le C_r |x|$ which gives in turn

$$\forall x\in]1,+\infty[,\,|\log(x)|^r\le |x|.$$

Gathering the two previous estimates leads to

$$\forall x\in]0,+\infty[,\,|\log(\psi_\rho(x))|^r\le \frac{C_{r,\gamma}}{\psi_\rho(x)^\gamma}+C_r|\psi_\rho|.$$
The right hand side of the above inequality being integrable, the proof is complete.
\end{proof}

\begin{lma} \label{lm.est.puiss}For $n$ large enough, then for all $\eta>0$, $\Prob$-almost surely, for all $\theta \in (0,1)$, there exists a constant $C(\omega,\eta,\theta)$ such that 
$$\Esp_X |\log(|f_n(X)|)|^{1+\eta}\leq C(\omega,\eta,\theta) n^{\theta}.$$
\end{lma}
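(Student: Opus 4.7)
The strategy is to bootstrap the in-expectation bound of Lemma \ref{lm.est.log.P} into a $\Prob$-almost sure bound with subpolynomial growth, trading extra randomness for a mild $n^\theta$ factor via Markov's inequality, Borel--Cantelli, and Jensen's inequality. The plan has three steps, of which only the choice of auxiliary parameters really requires care; the remaining obstacle is the bookkeeping needed to arrange a null set that works simultaneously for all $\theta\in(0,1)$.

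Fix $\eta>0$ and $\theta\in(0,1)$. The first step is to choose an auxiliary exponent $\eta'\gg\eta$ and a slack $\delta>0$ so that $(1+\delta)(1+\eta)\leq \theta(1+\eta')$; for instance $\delta=1$ together with $\eta'=3(1+\eta)/\theta$ will do. Under Assumption A.2, Lemma \ref{lm.mn.log} ensures $\log\psi_\rho\in L^{1+\eta'}([0,2\pi])$, and Lemma \ref{lm.est.log.P} applied with exponent $\gamma=1+\eta'$ then yields the $n$-uniform moment bound
\[
\Esp_{\Prob\otimes\Prob_X}\bigl|\log|f_n(X)|\bigr|^{1+\eta'}\leq C_{\eta'}.
\]
Setting $Y_n(\omega):=\Esp_X\bigl|\log|f_n(X)|\bigr|^{1+\eta'}$, Markov's inequality gives $\Prob(Y_n>n^{1+\delta})\leq C_{\eta'}\,n^{-(1+\delta)}$, which is summable in $n$. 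Borel--Cantelli therefore implies, $\Prob$-almost surely, that $Y_n\leq n^{1+\delta}$ for all $n\geq N(\omega,\eta,\theta)$.

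The third step is to reduce from $\eta'$ back down to $\eta$. Since $(1+\eta)/(1+\eta')<1$, the function $x\mapsto x^{(1+\eta)/(1+\eta')}$ is concave and Jensen's inequality under $\Prob_X$ yields
\[
\Esp_X\bigl|\log|f_n(X)|\bigr|^{1+\eta}\leq Y_n^{(1+\eta)/(1+\eta')}\leq n^{(1+\delta)(1+\eta)/(1+\eta')}\leq n^\theta
\]
for $n\geq N(\omega,\eta,\theta)$, by the choice of parameters. Absorbing the finite initial range $n<N(\omega,\eta,\theta)$ into a multiplicative constant produces the announced estimate $C(\omega,\eta,\theta)\,n^\theta$.

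The only subtle point is that the null set produced by Borel--Cantelli a priori depends on $\theta$, whereas the lemma asks for the conclusion to hold $\Prob$-almost surely for all $\theta\in(0,1)$. This is standard: one performs the argument along a countable sequence $\theta_k\searrow 0$, takes the (still null) union of the resulting exceptional sets, and for an arbitrary $\theta$ picks some $\theta_k\leq\theta$, so that the bound at $\theta_k$ implies the one at $\theta$ via $n^{\theta_k}\leq n^\theta$. Beyond this bookkeeping, no deeper obstacle is expected, since the analytic input, namely an in-expectation control of arbitrarily high logarithmic moments of $f_n(X)$, is precisely what Lemmas \ref{lm.est.log.P} and \ref{lm.mn.log} furnish under Assumption A.2.
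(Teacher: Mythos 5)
Your proof is correct and follows essentially the same approach as the paper: an in-expectation bound at a high logarithmic power (via Lemma \ref{lm.est.log.P} and Lemma \ref{lm.mn.log}), Markov's inequality to get a summable tail, Borel--Cantelli, and Jensen's inequality to relate the two exponents. The only difference is cosmetic: the paper applies Markov at exponent $\beta>1/\theta$ to the random variable $\Esp_X\left|\log|f_n(X)|\right|^{1+\eta}$ and pushes the power $\beta$ inside via Jensen before invoking Borel--Cantelli, while you first establish the almost-sure polynomial bound on $\Esp_X\left|\log|f_n(X)|\right|^{1+\eta'}$ and then use Jensen in the concave direction to pass from $\eta'$ down to $\eta$; these are two equivalent orderings of the same three ingredients. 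Your remark on countably exhausting $\theta\in(0,1)$ to obtain a single null set is a small piece of care that the paper's own proof leaves implicit.
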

\begin{proof}Our approach uses Borel--Cantelli Lemma once again. Set $\beta >\theta^{-1}>1$ such that $(1+\eta)\beta>1$. Lemma \ref{lm.est.log.P} and Jensen inequality ensure that
\[
\Esp_{\Prob \otimes \Prob_X} \left|\log(|f_n(X)|)\right|^{(1+\eta)\beta} \leq C(\eta,\beta, \|\log(\psi_{\rho})^{(1+\eta)\beta}\|_{L^1}).
\]
Then , Markov inequality coupled with Jensen inequality, gives that for $\beta >\frac{1}{\theta}$, 
\begin{eqnarray*}
\Prob\left(\Esp_X\left|\log(|f_n(X)|)\right|^{1+\eta} \geq n^{\theta}\right)&\leq& \frac{1}{n^{\theta \beta}} \Esp_{\Prob}\left [\Esp_{X} \left[\left|\log(|f_n(X)|) \right|^{1+\eta}\right]^{\beta}\right]\\
&\leq&  \frac{1}{n^{\theta \beta}} \Esp_{\Prob\otimes \Prob_X}\left|\log(|f_n(X)|) \right|^{(1+\eta)\beta}\\
&\leq&\frac{C(\eta,\beta, \|\log(\psi_{\rho})^{(1+\eta)\beta}\|_{L^1})}{n^{\theta \beta}}.
\end{eqnarray*}
Since $\theta \beta >1$, the series $\sum_n \frac{1}{n^{\theta \beta}}$ is convergent. Therefore, by Borel--Cantelli Lemma with respect to $\Prob$, for $n$ sufficiently large, we have $\Prob$-almost surely that
\[
\Esp_X\left[\left| \log(|f_n(X)|)\right|^{1+\eta}\right] \leq C(\eta,\theta,\omega) n^{\theta}.
\]
\end{proof}

We can now complete the proof of Theorem \ref{thm.as.asym}.
Following the steps of the proof of Proposition \ref{prop.sup.mom}, we must show that for some $\eta>0$, 
\begin{equation}\label{Equi-ps}
\sup_{n\geq 1} \Esp_{X} \left[ \left | \mathcal{N}(g_n,[0,2\pi])\right|^{1+\frac{\eta}{2}}\right] <+\infty. 
\end{equation}
Combining this equi-integrability result with the convergence in distribution established in Theorem \ref{thm.SZ.func}, we shall obtain that $\mathbb P$ almost surely 
\begin{equation*}
\lim_{n \to +\infty} \frac{\mathcal{N}(f_n,[0,2\pi])}{n}=\Esp_X\left[\mathcal{N}(g_{\infty},[0,2\pi])\right].
\end{equation*}
Now, as in the proof of Theorem \ref{thm.asym.mean}, the zeros of $g_{\infty}$ are the same as the ones of a stationary Gaussian process with $\sin_c$ covariance function, therefore we will indeed obtain that $\mathbb P$ almost surely 
\[
\lim_{n \to +\infty} \frac{\mathcal{N}(f_n,[0,2\pi])}{n}=\frac{2}{\sqrt{3}}.
\]

Let us now focus on the proof of the uniform estimate \eqref{Equi-ps}. To do so, we consider $\chi\in\mathcal{C}^\infty_c$, by Fourier inversion we may write

\begin{eqnarray*}
\mathbb{E}_X\left[\chi\left(f_n(X)\right)\right]&=&\frac{1}{2\pi}\int_{\mathbb{R}}\hat{\chi}(\xi)\mathbb{E}_X\left[\exp\left(-i\xi f_n(X)\right)\right]d\xi\\
\mathbb{E}_{X,N}\left[\chi\left(\sqrt{{2\pi}\, K_n\ast \psi_{\rho}(X)}N\right)\right]&=&\frac{1}{2\pi}\int_{\mathbb{R}}\hat{\chi}(\xi)\mathbb{E}_X\left[\exp\left(-\frac{\xi^2}{2}{2\pi}\, K_n\ast {\psi_{\rho}}(X)\right)\right]d\xi\\
\mathbb{E}_{X,N}\left[\chi\left(\sqrt{{2\pi}\, \psi_\rho(X)}N\right)\right]&=&\frac{1}{2\pi}\int_{\mathbb{R}}\hat{\chi}(\xi)\mathbb{E}_X\left[\exp\left(-\frac{\xi^2}{2}{2\pi}\,  {\psi_{\rho}}(X)\right)\right]d\xi
\end{eqnarray*}
Subtracting the two first equations and using Cauchy--Schwarz inequality entails that
\begin{eqnarray*}
&&\left|\mathbb{E}_X\left[\chi\left(f_n(X)\right)\right]-\mathbb{E}_{X,N}\left[\chi\left(\sqrt{{2\pi}\, {K_n \ast \psi_{\rho}}(X)}N\right)\right]\right|\\\
&&\le\frac{1}{2\pi}\int_{\mathbb{R}}|\hat{\chi}(\xi)|\left|\mathbb{E}_X\left[e^{-i\xi f_n(X)}\right]-\mathbb{E}_X\left[e^{-i\frac{\xi^2}{2} {2\pi}\, {K_n \ast \psi_{\rho}}(X)}\right]\right|d\xi\\
&&\le\frac{1}{2\pi}\sqrt{\int_\mathbb{R}|\hat{\chi}(\xi)|^2(|\xi|^2+1)^2d\xi}~~\sqrt{\int_{\mathbb{R}}\frac{\left|\mathbb{E}_X\left[e^{-i\xi f_n(X)}\right]-\mathbb{E}_X\left[e^{-i\frac{\xi^2}{2} {2\pi}\, {K_n \ast \psi_{\rho}}(X)})\right]\right|^2}{(|\xi|^2+1)^2}d\xi}.
\end{eqnarray*}
Taking the expectation with respect to $\mathbb{P}$, recalling that (see the beginning of Section \ref{sec.un.SZ}) 
\[
\forall t\in\mathbb{R},\,\forall n\ge 1,\,\mathbb{E}\left[ \left|\, \mathbb E_X \left[ e^{i t f_n(X)}  \right] - \mathbb E_X \left[ e^{- \frac{t^2}{2} {2\pi}\, K_n\ast \mu_{\rho}(X)}  \right]\, \right|^2\right]\le C \frac{t^2}{n^{\frac 1 6}},
\]
 {and noticing that $\mu_\rho=\psi_\rho(x)dx~\Rightarrow K_n\ast\mu_\rho=K_n \ast \psi_\rho$ we get that}
 
\[
\mathbb{E}\left[\sqrt{\int_{\mathbb{R}}\frac{\left|\mathbb{E}_X\left[e^{-i\xi f_n(X)}\right]-\mathbb{E}_X\left[e^{-i\frac{\xi^2}{2} {2\pi}\, {K_n \ast \psi_{\rho}}(X)}\right]\right|^2}{(|\xi|^2+1)^2}d\xi}\right]\le \frac{C}{n^{\frac{1}{12}}}\sqrt{\mathbb{E}\left[\int_{\mathbb{R}}\frac{|\xi|^2}{(|\xi|^2+1)^2}d\xi\right]}=\frac{\tilde{C}}{n^{\frac{1}{12}}}.
\]
As a a result, along the subsequence $n^{25}$, one deduces that

\begin{eqnarray*}
\sum_{n=1}^\infty n^{\frac{25}{24}}\mathbb{E}\left[\sqrt{\int_{\mathbb{R}}\frac{\left|\mathbb{E}_X\left[e^{-i\xi f_{n^{25}}(X)}\right]-\mathbb{E}_X\left[e^{-i\frac{\xi^2}{2} {2\pi}\, K_{n^{25}}\ast\psi_\rho(X)}\right]\right|^2}{(|\xi|^2+1)^2}d\xi}\right]<\infty.
\end{eqnarray*}
Thus, by a Borel--Cantelli argument we derive that, $\mathbb{P}$ almost surely, for some constant $C(\omega)>0$,

\begin{equation}\label{a.s.-esti1}
\left(\int_{\mathbb{R}}\frac{\left|\mathbb{E}_X\left[e^{-i\xi f_{n^{25}}(X)}\right]-\mathbb{E}_X\left[e^{-i\frac{\xi^2}{2} {2\pi K_{n^{25}}\ast\psi_\rho(X)}}\right]\right|^2}{(|\xi|^2+1)^2}d\xi\right)^{\frac 1 2}\le\frac{C(\omega)}{n^{\frac{25}{24}}}.
\end{equation}
On the other hand, using Plancherel isometry, provided that $\text{Supp}(\chi)\subset[-M,M]$,  we have
\begin{eqnarray*}
\int_\mathbb{R}|\hat{\chi}(\xi)|^2(|\xi|^2+1)^2d\xi&=&\int_{\mathbb{R}}|\hat{\chi}(\xi)|^2(|\xi|^4+2|\xi|^2+1)d\xi\\
&\le &2\int_{\mathbb{R}}\left(\chi''(x)^2+\chi'(x)^2+\chi(x)^2\right)dx\\
&\le& 2 M \left(\|\chi''\|_\infty^2+\|\chi'\|_\infty^2+\|\chi\|_\infty^2\right).
\end{eqnarray*}
Gathering this last estimate with \eqref{a.s.-esti1} entails that, for any $\chi\in\mathcal{C}^\infty_c$ such that
\begin{itemize}
\item $\max\left(\|\chi'''\|_\infty,\|\chi''\|_\infty,\|\chi'\|_\infty,\|\chi\|_\infty\right)\le 1$,
\item $\text{Supp}(\chi)\subset[-M,M]$,
\end{itemize}
then we have
\begin{equation}\label{a.s.-esti2}
\left|\mathbb{E}_X\left[\chi\left(f_{n^{25}}(X)\right)\right]-\mathbb{E}_{X,N}\left[\chi\left(\sqrt{{2\pi K_{n^{25}}\ast\psi_\rho(X)}}N\right)\right]\right|\le \frac{C(\omega)}{n^{\frac{25}{24}}}\sqrt{M}.
\end{equation}

Given the assumption A.1 on the Besov regularity of $\psi_{\rho}$, since the first Fourier coefficient of the Fej\'er kernel of order $n$ is equal to $1-\frac{1}{n}$, we can apply Theorem 1.5.8 p.69-70 of \cite{Butz71}, with the corresponding notations $\chi_p(u)=K_n(u)$, $\hat{\chi}_p(u)=1-\frac{1}{n}$ and $\omega^*(X_{2\pi},f,h)=\text{O}(h^{\alpha}),$ to deduce that
\[
\|K_n\ast\psi_{\rho}-\psi_{\rho}\|_{L^1([0,2\pi])}= O(n^{-\alpha/2}).
\]
Under the condition A.1, we have thus
\begin{eqnarray*}
&&\left|\mathbb{E}_X\left[\chi\left(\sqrt{{2\pi} \, K_{n^{25}}\ast\psi_\rho(X)}N\right)\right]-\mathbb{E}_{X,N}\left[\chi\left(\sqrt{{2\pi} \,\psi_\rho(X)}N\right)\right]\right|\\
&\le &\frac{1}{2\pi}\int_{\mathbb{R}}|\hat{\chi}(\xi)|\left|\mathbb{E}_X\left[e^{-\frac{\xi^2}{2}{2\pi} \, K_{n^{25}}\ast\psi_\rho(X)}\right]-\mathbb{E}\left[e^{-\frac{\xi^2}{2} {2\pi} \,\psi_\rho(X)}\right]\right|d\xi\\
&\le& \frac{1}{2\pi}\int_{\mathbb{R}}|\hat{\chi}(\xi)|\xi^2 d\xi~\times~\mathbb{E}_X\left[\left|K_{n^{25}}\ast\psi_\rho(X)-\psi_\rho(X)\right|\right]\\
&\le& \frac{C\times \sqrt{M}}{n^{\frac{25 \alpha}{2}}}.
\end{eqnarray*}
{Indeed, as previously one may write
\begin{eqnarray*}
\frac{1}{2\pi}\int_{\mathbb{R}}|\hat{\chi}(\xi)|\xi^2 d\xi&=&\frac{1}{2\pi}\int_{\mathbb{R}}|\hat{\chi}(\xi)|\xi^2 \frac{|\xi|+1}{|\xi|+1}d\xi\\
&\le& \frac{1}{2\pi}\left(\int_{\mathbb{R}}|\hat{\chi}(\xi)|^2\xi^4(1+|\xi|)^2\right)^{\frac 1 2}\left(\int_{\mathbb{R}} \frac{1}{(|\xi|+1)^2}d\xi\right)^{\frac 1 2}\\
&\stackrel{\text{Plancherel}}{\le}& C \left(\int_{\mathbb{R}} \left(|\chi'''(x)|^2+|\chi''(x)|^2+|\chi'(x)|^2+|\chi(x)|^2\right)dx\right)^{\frac 1 2}\\
&\le& C\sqrt{M}.
\end{eqnarray*}
}
\noindent
Gathering the latter with \eqref{a.s.-esti2} provides

\begin{equation}\label{a.s.-esti3}
\left|\mathbb{E}_X\left[\chi\left(f_{n^{25}}(X)\right)\right]-\mathbb{E}_{X,N}\left[\chi\left(\sqrt{{2\pi} \,\psi_\rho(X)}N\right)\right]\right|\le \left(\frac{C(\omega)}{n^{\frac{25}{24}}}+\frac{C}{n^{\frac{25 \alpha}{2}}}\right)\sqrt{M}.
\end{equation}
Now, take $\chi\in\mathcal{C}^\infty_c$ with the condition $\max\left(\|\chi'''\|_\infty,\|\chi''\|_\infty,\|\chi'\|_\infty,\|\chi\|_\infty\right)\le 1$ but not necessarily supported in $[-M,M]$. We can build $\tau_M\in\mathcal{C}^\infty_c$ with the conditions

\begin{itemize}
\item $\tau_M=1$ on $[-M+1,M-1]$,
\item $\tau_M=0$ on $[-M,M]^c$,
\item $0\le \tau_M\le 1$ and $\max(\|\tau_M'''\|_\infty,\|\tau_M''\|_\infty,\|\tau_M'\|_\infty,\|\tau_M\|_\infty)\le C$ for some absolute $C>0$ and any $M>1$.
\end{itemize}
We then write $\chi_M=\chi \times \tau_M$ and we obtain

\begin{eqnarray*}
&&\left|\mathbb{E}_X\left[\chi\left(f_{n^{25}}(X)\right)\right]-\mathbb{E}_{X,N}\left[\chi\left(\sqrt{{2\pi} \, \psi_\rho(X)}N\right)\right]\right|\\
&&\le \left|\mathbb{E}_X\left[\chi_M\left(f_{n^{25}}(X)\right)\right]-\mathbb{E}_{X,N}\left[\chi_M\left(\sqrt{{2\pi} \,\psi_\rho(X)}N\right)\right]\right|\\
&&+\left|\mathbb{E}_X\left[\chi\left(f_{n^{25}}(X)\right)\right]-\mathbb{E}_X\left[\chi_M\left(f_{n^{25}}(X)\right)\right]\right|\\
&&+\left|\mathbb{E}_{X,N}\left[\chi_M\left(\sqrt{{2\pi} \,\psi_\rho(X)}N\right)\right]-\mathbb{E}_{X,N}\left[\chi\left(\sqrt{{2\pi} \,\psi_\rho(X)}N\right)\right]\right|\\
&&\le\left(\frac{C(\omega)}{n^{\frac{25}{24}}}+\frac{C}{n^{\frac{25 \alpha}{2}}}\right)\sqrt{M}+\mathbb{P}_X\left(\left|f_{n^{25}}(X)\right|>M\right)+\mathbb{P}_X\left(\left|\sqrt{{2\pi} \, \psi_\rho(X)}N\right|>M\right).
\end{eqnarray*}
Besides, combining Markov inequality with Birkhoff--Khinchine Theorem (see Section \ref{Gaussian-ergo} below) 
\[
\mathbb{P}_X\left(\left|f_{n^{25}}(X)\right|>M\right)\le \frac{1}{M^2}\frac{1}{n^{25}}\sum_{k=1}^{n^{25}}\frac{a_k^2+b_k^2}{2}= O(1/M^2),
\]
and using Markov inequality again
\[
\mathbb{P}_X\left(\sqrt{{2\pi} \, \psi_\rho(X)}N>M\right)\le \frac{{2\pi} \,}{M^2}\mathbb{E}_{X,N}\left[\psi_\rho(X) N^2\right]=\frac{{1}}{M^2}.
\]

We finally gets, for some constant $C(\omega)$, for every $M>0$ and every $\chi\in\mathcal{C}^\infty_c$ which satisfies $\max\left(\|\chi'''\|_\infty,\|\chi''\|_\infty,\|\chi'\|_\infty,\|\chi\|_\infty\right)\le 1$:

\begin{equation}\label{a.s.-esti4}
\left|\mathbb{E}_X\left[\chi\left(f_{n^{25}}(X)\right)\right]-\mathbb{E}_{X,N}\left[\chi\left(\sqrt{{2\pi} \,  \psi_\rho(X)}N\right)\right]\right|\le \frac{C(\omega)}{M^2}+\sqrt{M} \frac{C(\omega)}{n^\beta},
\end{equation}
where $\beta=\min\left(\frac{25}{24}+\frac{25\alpha}{2}\right).$ On the other hand, relying on the proof of Lemma \ref{lem.birk} (with the subsequence $n^7$ replaced by $n^{25}$) we may infer for $n^{25}\le m\le (n+1)^{25}$ that
\[
\mathbb{E}_X\left[\left|f_{n^{25}}(X)-f_m(X)\right|^2\right]\le \frac{C(\omega)}{n}.
\]
Thus we also have
\[
\left|\mathbb{E}_X\left[\chi\left(f_{n^{25}}(X)\right)\right]-\mathbb{E}_X\left[\chi\left(f_{m}(X)\right)\right]\right|\le \frac{C(\omega)}{\sqrt{n}}.
\]
Combining the latter with \eqref{a.s.-esti4} ensures that, 

\begin{equation}\label{a.s.-esti5}
\left|\mathbb{E}_X\left[\chi\left(f_{m}(X)\right)\right]-\mathbb{E}_{X,N}\left[\chi\left(\sqrt{{2\pi} \, \psi_\rho(X)}N\right)\right]\right|\le C(\omega)\left(\frac{1}{M^2}+\sqrt{M} \frac{1}{m^{\frac{\beta}{25}}}+\frac{1}{m^{\frac{1}{50}}}\right).
\end{equation}

Setting $\theta=\min\left(\frac{\beta}{25},\frac{1}{50}\right)$ and optimizing in $M>0$ implies that $\mathbb{P}$-a.s. there exists a constant $C(\omega)>0$ such that for any $n\ge 1$ and any $\chi\in\mathcal{C}^\infty_c$ with $\max\left(\|\chi''\|_\infty,\|\chi'\|_\infty,\|\chi\|_\infty\right)\le 1$:

\begin{equation}\label{a.s.-esti6}
\left|\mathbb{E}_X\left[\chi\left(f_{m}(X)\right)\right]-\mathbb{E}_{X,N}\left[\chi\left(\sqrt{{2\pi} \,  \psi_\rho(X)}N\right)\right]\right|\le \frac{C(\omega)}{m^{\frac{4 \theta}{5}}}.
\end{equation}

Now, let us assume that $0 \leq \ge \chi \leq 1$, $\chi=1$ on $[-1,1]$ and $\chi=0$ on $\mathbb{R}/[-2,2]$. In particular, we have $\mathds{1}_{[-2,2]}\ge\chi\ge\mathds{1}_{[-1,1]}$. Let $\delta>0$ that will be chosen later. We apply the estimate \eqref{a.s.-esti5} to $\chi\left(\frac{\cdot}{\delta}\right)$ for which one may write 
$\max\left(\|\chi^{(i)}\|_\infty\,;\,i\in\{0,1,2,3\}\right) \le \frac{C}{\delta^3}$ and we obtain that

\begin{eqnarray*}
\mathbb{P}_X\left(\left|f_n(X)\right|\le \delta\right)&\le&\mathbb{E}_X\left[\chi\left(\frac{f_n(X)}{\delta}\right)\right] \le \mathbb{E}_{X,N}\left[\chi\left(\frac{\sqrt{{2\pi} \, \psi_\rho(X)}N}{\delta}\right)\right]+\frac{C(\omega)}{\delta^3 n^{\frac{4 \theta}{5}}}.
\end{eqnarray*}
On the other hand, by the assumption A.2 we know that $\frac{1}{\psi_\rho}\in L^\gamma([0,2\pi]$ which entails that
\begin{eqnarray*}
\mathbb{E}_{X,N}\left[\chi\left(\frac{\sqrt{{2\pi} \,  \psi_\rho(X)}N}{\delta}\right)\right]&\le& \mathbb{P}_{X,N}\left(\sqrt{{2\pi} \,  \psi_\rho(X)}N\le 2 \delta\right)\\
&=&\mathbb{P}_X\left(\lambda \sqrt{{2\pi} \,  \psi_\rho(X)}\le 2\delta\right)+C \lambda\\
&=&\mathbb{P}_X\left({2\pi} \,  \psi_\rho(X)^\gamma\le \left(\frac{2\delta}{\lambda}\right)^{2\gamma}\right) + C\lambda\\
&\le& C \lambda+\mathbb{E}_X\left[\frac{1}{{2\pi} \,\psi_\rho(X)^\gamma}\right]\left(\frac{2\delta}{\lambda}\right)^{2\gamma}.
\end{eqnarray*}
Optimizing in $\lambda$ gives for some constant $C>0$ and any $\delta>0$ the following estimate
\begin{equation}\label{a.s.-esti7}
\mathbb{E}_{X,N}\left[\chi\left(\frac{\sqrt{{2\pi} \,\psi_\rho(X)}N}{\delta}\right)\right]\le C \delta^{\frac{2\gamma}{2\gamma+1}}.
\end{equation}
Gathering the bounds \eqref{a.s.-esti6} and \eqref{a.s.-esti5} gives the bound

\begin{equation}\label{a.s.-esti8}
\mathbb{P}\text{-a.s.},\,\exists C(\omega)>0,\,\text{s.t.}\,\,\forall\delta>0,\,\,\mathbb{P}_X\left(\left|f_n(X)\right|\le \delta\right)\le C \delta^{\frac{2\gamma}{2\gamma+1}}+\frac{C(\omega)}{\delta^3 n^{\frac{4 \theta}{5}}}.
\end{equation}

The rest of the proof follows the same strategy as in the proof of Theorem \ref{thm.asym.mean}. Namely we write, for some $\lambda>0$ to be determined later

\begin{eqnarray}
\nonumber\Esp_{X} \left[\mathcal{N}(g_n,[0,2\pi])^{1+\eta/2}\right] &=&(1+\eta/2)\int_0^{+\infty}s^{\eta/2} \Prob_X \left(\mathcal{N}(g_n,[0,2\pi])>s\right)ds\\
\nonumber&=&(1+\eta/2)\int_0^{n^\lambda}s^{\eta/2} \Prob_X \left(\mathcal{N}(g_n,[0,2\pi])>s\right)ds\\
&+&(1+\eta/2)\int_{n^\lambda}^{\infty}s^{\eta/2} \Prob_X \left(\mathcal{N}(g_n,[0,2\pi])>s\right)ds.\label{eq-N-PX-UI}
\end{eqnarray}
Then we write, using again repeatedly the Rolle Theorem
\begin{eqnarray*}
\Prob_X\left(\mathcal{N}(g_n,[0,2\pi])>s\right) &\leq& \Prob_X\left(|g_n(0)|\leq \frac{(2\pi)^{\left\lfloor s\right\rfloor}}{\left\lfloor s\right\rfloor!}\|g_n^{(\left\lfloor s\right\rfloor)}\|_{\infty}\right)\\
&\le&\Prob_X\left(|g_n(0)|\leq \frac{(2\pi)^{\left\lfloor s\right\rfloor}}{\left\lfloor s\right\rfloor!}M\right)\\
&&+\Prob_X\left(\|g_n^{(\left\lfloor s\right\rfloor)}\|_{\infty}\ge M\right).
\end{eqnarray*}
Besides, via Sobolev estimates, and using the orthogonality of $(\cos(k X),\sin(kX))$ with respect to the Lebesgue measure on $[0,2\pi]$ we obtain that

\begin{eqnarray*}
\Prob_X\left(\|g_n^{(\left\lfloor s\right\rfloor)}\|_{\infty}\ge M\right)&\le&\frac{1}{M^2} \left(\mathbb{E}_X\left[ g_n^{\lfloor s\rfloor}(X)^2\right]+\mathbb{E}_X\left[ g_n^{\lfloor s+1\rfloor}(X)^2\right]\right)\\
&= &\frac{1}{M^2}~~\frac{1}{n}\sum_{k=1}^n \left(\left(\frac{k}{n}\right)^{2\lfloor s\rfloor}+\left(\frac{k}{n}\right)^{2\lfloor s+1\rfloor}\right)\left(\frac{a_k^2+b_k^2}{2}\right)\\
&\le& \frac{C(\omega)}{M^2}.
\end{eqnarray*}
where the last inequality is again due to Birkhoff--Khinchine Theorem.
Together, the two previous bounds imply that

\begin{equation}\label{borne-Px(N)}
\Prob_X\left(\mathcal{N}(g_n,[0,2\pi])>s\right)\le \frac{C(\omega)}{M^2}+\Prob_X\left(|g_n(0)|\leq \frac{(2\pi)^{\left\lfloor s\right\rfloor}}{\left\lfloor s\right\rfloor!}M\right)\\
\end{equation}

Now, we can choose $M(s)=(1+|s|)^{\eta/2+1}$ and notice that, for $s$ large enough, $\frac{(2\pi)^{\left\lfloor s\right\rfloor}M(s)}{\left\lfloor s\right\rfloor!} <1$. As a result, provided that $s$ is large enough, we get by Markov inequality and the fact that $x\mapsto |\log(x)|$ decreases on $]0,1[$:
\begin{eqnarray*}
\Prob_X\left(|g_n(0)|\leq \frac{(2\pi)^{\left\lfloor s\right\rfloor}M(s)}{\left\lfloor s\right\rfloor!}\right)&=&\Prob_X\left( |f_n(X)|\leq \frac{(2\pi)^{\left\lfloor s\right\rfloor}M(s)}{\left\lfloor s\right\rfloor!}\right)\\
&=&\Prob_X\left(\left| \log |f_n(X)| \right| \geq \left | \log \left(\frac{(2\pi)^{\left\lfloor s\right\rfloor} M(s)}{\left\lfloor s\right\rfloor!}\right)\right| \right)\\
&\leq&\frac{\Esp_{X}\left[\left| \log (|f_n(X)|)\right|^{1+\eta}\right]}{\left|\log\left(\frac{(2\pi)^{\left\lfloor s\right\rfloor}M(s)}{\left\lfloor s\right\rfloor!} \right)\right|^{1+\eta}}.
\end{eqnarray*}

Now, we rely on Lemma \ref{lm.est.puiss} which guarantees that for some constant $C(\omega)>0$ and $\epsilon>0$ small enough we have
\[
\forall n\ge 1,\,\Esp_X |\log(|f_n(X)|)|^{1+\eta}\leq C(\omega,\epsilon) n^{\epsilon}.
\]
Hence, for all $\eta >0$ and $s$ large enough, (and recalling that $g_n(0)=f_n(X)$) we get

\begin{equation}\label{eq.sup.P-X}
\sup_{n\geq 1}\Prob_X\left(|f_n(X)|\leq  \frac{(2\pi)^{\left\lfloor s\right\rfloor}M(s)}{\left\lfloor s\right\rfloor!}\right) \le \frac{C(\omega,\eta)n^\epsilon}{s^{1+\eta}}.
\end{equation}
Plugging the estimate \eqref{eq.sup.P-X} inside \eqref{borne-Px(N)} ensures that, for $s$ large enough, $\mathbb{P}$-a.s.,

\begin{equation}\label{eq-N-PX}
\forall n\ge 1, \, \Prob_X\left(\mathcal{N}(g_n,[0,2\pi])>s\right)\le\frac{C(\omega)}{(1+s)^{2+\eta}}+\frac{C(\omega,\eta)n^\epsilon}{s^{1+\eta}}.
\end{equation}
Now we combine \eqref{eq-N-PX} and \eqref{eq-N-PX-UI} and we obtain
\begin{eqnarray*}
\Esp_{X} \left[\mathcal{N}(g_n,[0,2\pi])^{1+\eta/2}\right] &=&(1+\eta/2)\int_0^{+\infty}s^{\eta/2} \Prob_X \left(\mathcal{N}(g_n,[0,2\pi])>s\right)ds\\
&=&(1+\eta/2)\int_0^{n^\lambda}s^{\eta/2} \Prob_X \left(\mathcal{N}(g_n,[0,2\pi])>s\right)ds\\
&+&(1+\eta/2)\int_{n^\lambda}^{\infty}s^{\eta/2} \Prob_X \left(\mathcal{N}(g_n,[0,2\pi])>s\right)ds\\
&\le&(1+\eta/2)\int_0^{n^\lambda}s^{\eta/2}\Prob_X\left(|f_n(X)|\leq  \frac{(2\pi)^{\left\lfloor s\right\rfloor}M(s)}{\left\lfloor s\right\rfloor!}\right)
ds\\
&\stackrel{\eqref{eq-N-PX}}{+}&(1+\eta/2)\int_{n^\lambda}^{\infty}s^{\eta/2} \left(\frac{C(\omega)}{(1+s)^{2+\eta}}+\frac{C(\omega,\eta)n^\epsilon}{s^{1+\eta}}\right)ds.
\end{eqnarray*}
Then, in view of using the estimate \eqref{a.s.-esti8} we set $a_n$ the least positive number such that

$$s>a_n \Rightarrow \frac{(2\pi)^{\left\lfloor s\right\rfloor}M(s)}{\left\lfloor s\right\rfloor!} \le \frac{1}{n^{\frac{\theta}{5}}}.$$

Due to the growth of the term $\lfloor s \rfloor !$ it is clear that for $n$ large enough we have $a_n<n^{\lambda}$.
\[
\begin{array}{lll}
& I := \displaystyle{\int_0^{n^\lambda}s^{\eta/2}\Prob_X\left(|f_n(X)|\leq  \frac{(2\pi)^{\left\lfloor s\right\rfloor}M(s)}{\left\lfloor s\right\rfloor!}\right)ds} \\ 
&=\displaystyle{ \int_0^{a_n}s^{\eta/2}\Prob_X\left(|f_n(X)|\leq  \frac{(2\pi)^{\left\lfloor s\right\rfloor}M(s)}{\left\lfloor s\right\rfloor!}\right)ds
+\int_{a_n}^{n^\lambda} \Prob_X\left(|f_n(X)|\leq  \frac{(2\pi)^{\left\lfloor s\right\rfloor}M(s)}{\left\lfloor s\right\rfloor!}\right)}\\
& \le  \displaystyle{\int_0^{a_n}s^{\eta/2}\Prob_X\left(|f_n(X)|\leq  \frac{(2\pi)^{\left\lfloor s\right\rfloor}M(s)}{\left\lfloor s\right\rfloor!}\right)ds
+n^\lambda \Prob_X\left(|f_n(X)|\leq  \frac{1}{n^{\frac{\theta}{5}}}\right)}\\
& \displaystyle{\stackrel{\eqref{a.s.-esti8}~\text{with}~\delta=n^{-\frac \theta 5}}{\le}}   \displaystyle{C(\omega)\left(\left(\frac{1}{n^{\frac{\theta}{5}}}\right)^{\frac{2\gamma}{2\gamma+1}}+\frac{1}{n^{\frac{\theta}{5}}}\right.
+\left.\int_0^{a_n} \left(\frac{(2\pi)^{\left\lfloor s\right\rfloor}M(s)}{\left\lfloor s\right\rfloor!}\right)^{\frac{2\gamma}{2\gamma+1}}ds+n^\lambda\times \frac{1}{n^{\frac{\theta}{5}}}\right)}.
\end{array}
\]
Provided that $\lambda$ is chosen small enough, each of the above term is uniformly bounded in $n$ which guarantees the desired uniform integrability and achieves the proof.

\begin{rmk}
In the same way, a similar proof would give that $\Prob$-almost surely, for any compact $[a,b]$ of $[0,2\pi]$,
\[\lim_{n \to +\infty} \frac{\mathcal{N}(f_n,[a,b])}{n}=\frac{b-a}{\pi\sqrt{3}}.\]
Indeed, assuming this time that $X$ is uniformly distributed over $[a,b]$, one may study the convergence of the stochastic process $g_n(x)=f_n\left(X+\frac{x}{n}\right)$ towards a non-degenerate limit and apply the same strategy.
\end{rmk}
\section{Appendix}\label{sec.appendix}
\subsection{Birkhoff--Khinchine Theorem for Gaussian sequences}\label{Gaussian-ergo}

In this section, we recall the necessary material about Gaussian ergodicity and the technical details ensuring the validity of Lemma \ref{lem.birk} and \ref{lem.birk2} stated in Sections \ref{sec.un.SZ} and \ref{sec.SZ.func} respectively. First, one can build the stationary sequence $\{a_k\}_{k\ge 1}$  as the coordinates on the space $\mathbb{R}^{\mathbb{N}}$ endowed with the cylindrical topology and equipped with the Gaussian measure $m$ defined as

$$
\forall p\ge 1,\forall (A_1,\cdots,A_p)\in\mathcal{B}\left(\mathbb{R}\right)^p,\,m\left(A_1\times A_2\times\cdots\times A_p\times\mathbb{R}\times\mathbb{R}\times\cdots\right)=\mathbb{P}\left(\bigcap_{i=1}^p\{a_i\in A_i\}\right).
$$
Then, the shift operates on this space and preserves the measure $m$ since the sequence $\{a_k\}_{k\ge 1}$ is stationary. We refer to \cite[pages 188]{CFS80} for an introduction to these kinds of dynamical systems.  One can use the celebrated Birkhoff--Khinchine Theorem for this transformation, see e.g. \cite[pages 11]{CFS80}, and we get that 

$$\mathbb{P}-\text{a.s.}, \;\; \frac{1}{n}\sum_{k=1}^n a_k^2\to \mathbb{E}\left[(x_1,\cdots,)\mapsto x_1^2\,\Big{|}\,\mathcal{I}\right],$$
where $\mathcal{I}$ is the sigma field generated by all the mappings from $\mathbb{R}^\mathbb{N}\to\mathbb{R}$ that are shift-invariant $m$-almost surely. In particular, $\mathbb P-$almost surely, $\frac{1}{n^7}\sum_{k=1}^{n^7} a_k^2+b_k^2$ has a finite limit as $n$ goes to infinity, which is the key ingredient to establish Lemma \ref{lem.birk}.

\begin{proof}[Proof of Lemma \ref{lem.birk}] Recall that $m$ is a large positive integer and $n$ is defined as the unique integer such that $n^7 < m \leq (n+1)^7$.
By Cauchy--Schwarz inequality, we have
\[
\left| \, \mathbb E_X \left[ e^{i t f_{n^7}(X)} \right]-\mathbb E_X \left[ e^{i t f_{m}(X)} \right]\, \right| \leq t \, \mathbb E_X \left[ | f_{n^7}(X) - f_{m}(X)|^2\right]^{1/2}.
\]
Otherwise, we have the estimate
\begin{equation}\label{Birko-as}
\begin{array}{ll}
&\displaystyle{\mathbb E_X \left[ | f_{n^7}(X) - f_{m}(X)|^2\right] \leq 2 \left(  1 - \sqrt{\frac{n^7}{m}}\right)^2 \mathbb E_X \left[ f_{n^7}(X)^2\right]}\\
\\
&+\displaystyle{2 \,\mathbb E_X \left[\left| \frac{1}{\sqrt{m}} \sum_{k=n^7+1}^m a_k \cos(kX)+b_k \sin(kX)\right|^2  \right]}\\
\\
&= \displaystyle{2 \left(  1 - \sqrt{\frac{n^7}{m}}\right)^2 \frac{1}{n^7} \sum_{k=1}^{n^7} \frac{a_k^2 + b_k^2}{2} +\frac{2}{m}\sum_{k=n^7+1}^m \frac{a_k^2 + b_k^2}{2}  }.
\end{array}
\end{equation}
~\\
By Birkhoff--Khinchine Theorem, we have first 
\[
\left(  1 - \sqrt{\frac{n^7}{m}}\right)^2 \frac{1}{n^7}\sum_{k=1}^{n^7} \frac{a_k^2 + b_k^2}{2}\sim \frac{1}{4 n^{9}}\sum_{k=1}^{n^7} \frac{a_k^2 + b_k^2}{2}=O\left(\frac{1}{n^2}\right).
\]
Besides, one may write
\begin{eqnarray*}
&&\frac{1}{m}\sum_{k=n^7+1}^m \frac{a_k^2 + b_k^2}{2}\le \frac{1}{n^7} \sum_{k=n^7+1}^{(n+1)^7} \frac{a_k^2 + b_k^2}{2}\\
&&=\frac{(n+1)^7}{n^7}\frac{1}{(n+1)^7}\sum_{k=1}^{(n+1)^7} \frac{a_k^2 + b_k^2}{2}-\frac{1}{n^7}\sum_{k=1}^{n^7} \frac{a_k^2 + b_k^2}{2}\\
&=&\left(\underbrace{\frac{1}{(n+1)^7}\sum_{k=1}^{(n+1)^7} \frac{a_k^2 + b_k^2}{2}-\frac{1}{n^7}\sum_{k=1}^{n^7} \frac{a_k^2 + b_k^2}{2}}_{:=R_n}\right)+\underbrace{\frac{\sum_{k=0}^6 \binom{7}{k} n^k}{n^7}}_{=O\left(\frac 1 n\right)}\times\underbrace{\frac{1}{(n+1)^7}\sum_{k=1}^{(n+1)^7} \frac{a_k^2 + b_k^2}{2}}_{=O\left(1\right)}.
\end{eqnarray*}
Next we write
\[
\begin{array}{ll}
R_n & =\displaystyle{\sum_{k=1}^{n^7}\left(\frac{a_k^2+b_k^2}{2}\right)\left(\frac{1}{(n+1)^7}-\frac{1}{n^7}\right)+\frac{a_{(n+1)^7}+b_{(n+1)^7}}{2 (n+1)^7}}\\
\\
&=\displaystyle{\underbrace{\frac{1}{n^7} \sum_{k=1}^{n^7}\left(\frac{a_k^2+b_k^2}{2}\right)}_{=O\left(1\right)}
\underbrace{\left(\frac{1}{(1+1/n)^7}-1\right)}_{=O\left(\frac 1 n\right)}+\frac{1}{n+1}~\underbrace{\frac{a^2_{(n+1)^7}+b^2_{(n+1)^7}}{2 (n+1)^6}}_{=o(1)}}.
\end{array}
\]
Let us finally detail the $o(1)$ in the above equation. For any sequence $\{X_k\}_{k\ge 1}$ of standard Gaussian random variables, any $\beta>0$  and any $\epsilon>0$ we have

\[
\sum_{k=1}^\infty\mathbb{P}\left(|X_k|>\epsilon k^\beta\right)\le \sum_{k=1}^\infty \frac{1}{\sqrt{2\pi}}\int_{\epsilon k^\beta}^\infty e^{-\frac{x^2}{2}}dx<\infty.
\]

Then Borel--Cantelli Lemma implies that $X_n/n^\beta\to 0$ almost surely, in particular in our case, we have $|a_{(n+1)^7}/(n+1)^3|=o(1)$, hence the result.

\end{proof}

\begin{proof}[Proof of Lemma \ref{lem.birk2}]
Let us rewrite
\[
Z_{n}(X,t,\lambda)=\sum_{p=1}^M \lambda_j g_n(t_p) = \frac{1}{\sqrt{n}} \sum_{k=1}^n a_k \alpha_{k,n}(X)+b_k \beta_{k,n}(X),
\]
where
\[
\alpha_{k,n}(X):=\sum_{p=1}^M \lambda_p \cos \left( k X+\frac{k t_p}{n}\right), \quad \beta_{k,n}(X):=\sum_{p=1}^M \lambda_p \sin \left( k X+\frac{k t_p}{n}\right).
\]
Due to the orthogonality of trigonometric functions, a straightforward computation then yields the following orthogonality relations, for all $1\leq k \leq n$ 
and $1\leq k' \leq n'$
\[
\mathbb E_X \left[ \alpha_{k,n}(X) \alpha_{k',n'}(X)\right] =  \mathbb E_X \left[ \beta_{k,n}(X) \beta_{k',n'}(X)\right] = \delta_{k,k'}\times \frac{1}{2} \sum_{p,q=1}^M \lambda_p \lambda_q \cos\left( \frac{k t_p}{n}-\frac{k t_q}{n'}\right),
\]
and 
\[
\mathbb E_X \left[ \alpha_{k,n}(X) \beta_{k',n'}(X)\right] =\delta_{k,k'}\times \frac{1}{2} \sum_{p,q=1}^M \lambda_p \lambda_q \sin\left( \frac{k t_p}{n}-\frac{k t_q}{n'}\right),
\]
so that by symmetry
\[
\mathbb E_X \left[ \alpha_{k,n}(X) \beta_{k',n}(X)\right] = 0.
\]
In particular, we have 
\[
\mathbb E_X \left[ \alpha_{k,n}(X)^2\right] =  \mathbb E_X \left[ \beta_{k,n}(X)^2\right] \leq  \frac{1}{2}  \times ||\lambda||_1^2.
\]
and 
\begin{equation}\label{eq.devcos}
\left \lbrace \begin{array}{l}
\displaystyle{\mathbb E_X \left[ (\alpha_{k,n}(X)-\alpha_{k,n'}(X))^2\right] \leq 2\pi ||\lambda||_1^2 \left| \frac{k}{n}-\frac{k}{n'} \right|}, \\
\\
 \displaystyle{\mathbb E_X \left[ (\beta_{k,n}(X)-\beta_{k,n'}(X))^2\right] \leq 2\pi ||\lambda||_1^2 \left| \frac{k}{n}-\frac{k}{n'} \right|}.
 \end{array}\right.
\end{equation}
Recall that $m$ is a positive integer and $n$ is defined as the unique integer such that $n^7 < m \leq (n+1)^7$. By triangular inequality and Cauchy--Schwarz inequality, we have 
\[
\begin{array}{rl}
\left|\mathbb E_X\left[e^{i Z_{n^7}(X,t,\lambda)}\right]  -\mathbb E_X\left[e^{i Z_{m}(X,t,\lambda)}\right] \right| & \leq \mathbb E_X \left[|Z_{n^7}(X,t,\lambda)-Z_{m}(X,t,\lambda)|  \right]  \\
\\
& \leq \sqrt{\Esp_X\left[ U^2\right]} + \sqrt{\Esp_X\left[ V^2\right]} + \sqrt{\Esp_X\left[ W^2\right]} 
\end{array}
\]
where
\[
\begin{array}{ll}
U& :=\displaystyle{\left(\frac{1}{\sqrt{n^{7}}}-\frac{1}{\sqrt{m}}\right) \left(\sum_{k=1}^{n^{7}} a_k \alpha_{k,n^{7}}(X)+b_k \beta_{k,n^{7}}(X)\right)},\\
\\
V & :=\displaystyle{\frac{1}{\sqrt{m}}\left(\sum_{k=1+n^{7}}^{m}a_k \alpha_{k,m}(X)+b_k\beta_{k,m}(X)\right)},\\
 \\
W& :=\displaystyle{\frac{1}{\sqrt{m}}\left(\sum_{k=1}^{n^{7}}a_k\left(\alpha_{k,n^{7}}(X)-\alpha_{k,m}(X)\right)+b_k\left(\beta_{k,n^{7}}(X)-\beta_{k,m}(X)\right)\right)}.
\end{array}
\]
Using the orthogonality relations above, and again Birkhoff--Khinchine Theorem, we have then 
\[
\mathbb E_X[U^2] \leq  ||\lambda||_1^2 \left(\frac{1}{\sqrt{n^{7}}}-\frac{1}{\sqrt{m}}\right)^2 \sum_{k=1}^{n^7} \frac{a_k^2+b_k^2}{2} = ||\lambda||_1^2 \underbrace{\left(1-\sqrt{\frac{n^7}{m}}\right)^2}_{O(1/n^2)} \underbrace{\left( \frac{1}{n^7} \sum_{k=1}^{n^7} \frac{a_k^2+b_k^2}{2}\right)}_{O(1)}.
\]
In the same way and proceeding as in the proof of Lemma \ref{lem.birk} above,  we get
\[
\mathbb E_X [V^2] \leq ||\lambda||_1^2 \times  \frac{1}{m} \sum_{k=1+n^{7}}^m \frac{a_k^2 +b_k^2}{2} = O\left(\frac{1}{n}\right).
\]
Finally, using again the orthogonality relations and Equation \eqref{eq.devcos}, we obtain
\[
\mathbb E_X [W^2] \leq ||\lambda||_1^2 \times 4\pi \times \left( 1-\frac{n^{7}}{m}\right)  \times \left( \frac{1}{m} \sum_{k=1}^{n^{7}} (a_k^2 +b_k^2)\right)=O\left(\frac{1}{n}\right) .
\]
As a conclusion, we get that 
\[
\left|\mathbb E_X\left[e^{i Z_{n^7}(X,t,\lambda)}\right]  -\mathbb E_X\left[e^{i Z_{m}(X,t,\lambda)}\right] \right|  =O \left(\frac{1}{\sqrt{n}}  \right)=O \left(\frac{1}{m^{1/14}}  \right).
\]
hence the result.
\end{proof}
\subsection{Trigonometric kernels and convolutions}

\label{sec.trigo.app}

%

Recall the definitions of the kernel $K_n^{t,\lambda}$ and its normalized version $\bar{K}_n^{t,\lambda}$ given in Lemma 
\ref{lem.rep.Z}. 
\[
K_n^{t,\lambda}(x):=\frac{1}{n}\left| \sum_{p=1}^{M}\lambda_p e^{i\frac{(n+1)}{2n}t_p} \frac{\sin\left(\frac{n}{2}(x+\frac{t_p}{n})\right)}{\sin\left(\frac{x+\frac{t_p}{n}}{2}\right)}\right|^2, \quad 
\bar{K}_n^{t,\lambda}(x):=\frac{{2\pi} \, K_n^{t,\lambda}(x)}{\int_0^{2\pi}K_n^{t,\lambda}(x)dx}.
\]

\begin{lma}\label{lem.trig.kernel}
The function $\bar{K}_n^{t,\lambda}$ is a good trigonometric kernel, i.e. it satisfies the following properties:
\begin{enumerate}
\item $\bar{K}_n^{t,\lambda}\geq 0$ and $\frac{1}{2\pi}\int_{0}^{2\pi}\bar{K}_n^{t,\lambda}(x)dx = 1.$
\item For $n$ large enough, there exists a constant $C=C(t,\lambda)>0$ such that uniformly in $x \in [0,2\pi]$
\[
\sup_{x \in [0,2\pi]} \bar{K}_n^{t,\lambda}(x) \leq C n, 
\qquad 
\bar{K}_n^{t,\lambda}(x)  \leq C \left(  \sum_{p=1}^M \frac{1}{n\left(x+\frac{t_i}{n}\right)^2}\right).
\]
\item For any $\delta>0$ and for $n$ large enough,
\[
\lim_{n \to +\infty}\int_{|x|> \delta} 
\bar{K}_n^{t,\lambda}(x)dx =0.
\]
\end{enumerate}
\end{lma}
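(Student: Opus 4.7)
The plan is to verify the three claimed properties in order, relying on elementary estimates for the Dirichlet-type kernels $D_n(u) := \sin(nu/2)/\sin(u/2)$ which appear in the definition of $K_n^{t,\lambda}$.

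Property (1) is immediate from the definition: $K_n^{t,\lambda}(x)$ is $\tfrac{1}{n}$ times a squared modulus, hence non-negative, and $\bar{K}_n^{t,\lambda}$ is normalized by construction so that $\frac{1}{2\pi}\int_0^{2\pi}\bar K_n^{t,\lambda}(x)\,dx = 1$.

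For Property (2), I will bound $K_n^{t,\lambda}$ pointwise and then divide by its integral to transfer the estimate to $\bar K_n^{t,\lambda}$. By the discrete Cauchy--Schwarz inequality,
\[
K_n^{t,\lambda}(x) \leq \frac{M \|\lambda\|_\infty^2}{n} \sum_{p=1}^M \left| \frac{\sin\bigl(\tfrac{n}{2}(x+t_p/n)\bigr)}{\sin\bigl(\tfrac{1}{2}(x+t_p/n)\bigr)} \right|^2,
\]
and I shall then invoke two standard estimates already used in Lemma \ref{lm.prop.L}: the trivial bound $|D_n(u)| \leq n$ yields $K_n^{t,\lambda}(x) \leq C n$ uniformly, while $|\sin(u/2)| \geq |u|/\pi$ on $[-\pi,\pi]$ gives $|D_n(u)|^2 \leq \pi^2/u^2$, hence the tail bound $K_n^{t,\lambda}(x) \leq C \sum_{p=1}^M \frac{1}{n(x+t_p/n)^2}$ (read modulo $2\pi$). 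To transfer these two bounds to $\bar K_n^{t,\lambda}$ I will apply Lemma \ref{lm.fej.leb.multibis}, which ensures that
\[
\int_0^{2\pi} K_n^{t,\lambda}(x)\,dx = 2\pi \sum_{p,q=1}^M \lambda_p \lambda_q \sin_c(t_p - t_q) + O(1/n).
\]
The limiting sum is strictly positive (assuming the $t_p$'s are distinct and $\lambda \neq 0$), since it coincides with the variance of $\sum_p \lambda_p N_{t_p}$ for the non-degenerate sinc-Gaussian process $N$; hence the normalizing factor is bounded below by a positive constant for $n$ large enough, and the two claimed bounds follow with a constant $C = C(t,\lambda)$ absorbing the normalization.

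For Property (3), I will simply integrate the tail bound obtained in (2). For any fixed $\delta > 0$ and $n$ large enough that $\max_p t_p/n \leq \delta/2$, the condition $|x| > \delta$ (modulo $2\pi$) forces $|x + t_p/n| \geq \delta/2$ for every $p$, so
\[
\int_{|x|>\delta} \bar K_n^{t,\lambda}(x)\,dx \leq C \sum_{p=1}^M \int_{|x|>\delta} \frac{dx}{n(x+t_p/n)^2} = O\!\left(\frac{1}{n\delta}\right) \xrightarrow[n \to \infty]{} 0.
\]
The main obstacle is confirming that the normalizing integral stays bounded away from zero; this is where the positive-definiteness of the sinc covariance kernel enters, via Lemma \ref{lm.fej.leb.multibis}. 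Once this is in hand, the analysis is a direct variant of the classical good-kernel analysis of the Fejér kernel $K_n$ carried out in Lemma \ref{lm.prop.L}.
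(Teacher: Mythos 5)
Your proof is correct and follows essentially the same route as the paper's: decompose $K_n^{t,\lambda}$ into a sum of shifted Fej\'er kernels (you via Cauchy--Schwarz, the paper via the inequality $|\sum a_p|^2\le M\sum|a_p|^2$, which is the same thing), invoke the bounds from Lemma~\ref{lm.prop.L}, and control the normalizing integral away from zero through Lemma~\ref{lm.fej.leb.multibis} and the non-degeneracy of the $\sin_c$ Gaussian process. Your explicit caveat that positivity of the limit requires the $t_p$ to be distinct and $\lambda\neq 0$ is appropriate and is tacitly assumed in the paper as well.
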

\begin{proof}
The first point results from the fact that $K_n^{t,\lambda}$ is trivially non negative and from the very definition of $\bar{K}_n^{t,\lambda}$.
Notice that by Lemma \ref{lm.fej.leb.multibis}, as $n$ goes to infinity $\int_0^{2\pi}K_n^{t,\lambda}(x)dx$ converges to 
\[
{2 \pi} \sum_{p, q=1}^M \lambda_p \lambda_q \sin_c\left(t_p-t_q\right) =\mathbb E \left[ \left| \sum_{p=1}^M \lambda_p N_{t_p} \right|^2\right]>0
\]
where $(N_t)$ is the standard Gaussian $\sin_c$ process, which is known to be non degenerate. Therefore, for $n$ large enough, $\int_0^{2\pi}K_n^{t,\lambda}(x)dx$ is positive and we can give all the estimates on $K_n^{t,\lambda}$ without loss of generality. 
Precisely, for the second point, remark that by triangular inequality, for all $x \in [0,2\pi]$,
\[
K_n^{t,\lambda}(x) \leq 2^{p-1}\sum_{i=1}^{p}\lambda_i^{2} K_n\left(x+\frac{t_i}{n}\right).
\]
where $K_n$ is the standard Fej\'er kernel. The desired estimates then follow directly from the ones established in Lemma \ref{lm.prop.L} for the standard Fej\'er kernel $K_n$. The third point is an immediate consequence of the second, since for $n$ large enough, $K_n^{t,\lambda}(x)=O(1/n)$ for $|x|>\delta$.
\end{proof}

We can now give the proof of Lemma \ref{lm.fej.leb.multi}, i.e. the Fej\'er--Lebesgue type convergence associated to the kernel $\bar{K}_n^{t,\lambda}$.

\begin{proof}[Proof of Lemma  \ref{lm.fej.leb.multi}]
The proof is very close from the one of Lemma \ref{lm.fej.leb}, with the slight difference that, contrary to $K_n$ and $L_n$, the new kernel $\bar{K}_n^{t,\lambda}$ is not even. Let us set
\[
\varphi_x^{\pm}(t):=\mu_{\rho}([0,x\pm t])-t \psi_{\rho}(x), \quad \Phi_x^{\pm}(t):=\int_{0}^{t}  |d\varphi_x^{\pm}(u)|.
\]
Set in the same way $E^{\pm}:=\{x \in [0,2\pi], \Phi_x^{\pm}(t)=o(t)\}$. By Theorem 8.4, p 106 of \cite{Zyg03}, Lebesgue almost all point $x$ in $[-\pi,\pi]$ belongs to the set $E^+ \cap E^-$.
Otherwise, we have the representation 
\[
\bar{K}_n^{t,\lambda} \ast \mu_{\rho}(x)-\psi_{\rho}(x)=\frac{1}{{2\pi}} \left(  \int_0^{\pi}\bar{K}_n^{t,\lambda}(u)d\varphi_x^{-}(u)+ \int_0^{\pi}\bar{K}_n^{t,\lambda}(-u)d\varphi_x^{+}(u)du\right).
\] 
The rest of the proof then follows the exact same lines as the one of Lemma \ref{lm.fej.leb}. Indeed, for all $x\in E^{+}\cap E^{-}$,  using the estimates for $\bar{K}_n^{t,\lambda}$ of the second point of Lemma \ref{lem.trig.kernel} above and an integration by parts, one deduces that the two last integrals converge to zero as $n$ goes to infinty.
\end{proof}


\end{document}